\newcommand{\I}{\mathcal{I}}
\begin{document}
\begin{frontmatter}

\title{Bridging the gap between rooted and unrooted phylogenetic networks}
\author{P. Gambette}
\address{Universit\'e Paris-Est, LIGM (UMR 8049), UPEM, CNRS, 
ESIEE, ENPC, F-77454,
Marne-la-Vall\'ee, France.}
\author{K.T. Huber, G.E. Scholz}
\address{School of Computing Sciences,
University of East Anglia, UK.} 

\date{\today}

\newtheorem{theorem}{Theorem}[section]
\newtheorem{lemma}[theorem]{Lemma} \newtheorem{sublemma}[theorem]{}
\newtheorem{corollary}[theorem]{Corollary}
\newtheorem{proposition}[theorem]{Proposition}
\newtheorem{example}[theorem]{Example}
\newtheorem{definition}[theorem]{Definition}

\newcommand{\cQ}{{\mathcal Q}}
\newcommand{\cC}{{\mathcal C}}
\newcommand{\cS}{{\mathcal S}}
\newcommand{\cL}{{\mathcal L}}
\newcommand{\cT}{{\mathcal T}}
\newcommand{\cB}{{\mathcal B}}

\newcommand{\pf}{\noindent{\em Proof: }}
\newcommand{\epf}{\hfill\hbox{\rule{3pt}{6pt}}\\}

\begin{abstract}
The need for structures capable of accommodating complex evolutionary
signals such as those found in, for example, wheat has fueled research into
phylogenetic networks. Such structures generalize the standard 
phylogenetic tree model by also allowing cycles and have been
introduced in rooted and unrooted form. In contrast to phylogenetic 
trees, however, surprisingly little is known about the interplay between both
types thus hampering our ability to make much needed progress for rooted
phylogenetic networks by drawing on insights from their much better
understood unrooted counterparts. 
Unrooted phylogenetic networks 
are underpinned by split systems and by focusing on them
we establish a first link between both types. More precisely, we
develop a link between 1-nested phylogenetic networks which are
examples of rooted phylogenetic networks and the well-studied median
networks (aka Buneman graph)
which are examples of unrooted phylogenetic networks. In particular, we
show that not only can a 1-nested network be obtained from a median
network but also that that network is, in a well-defined sense, optimal. 
Along the way, we characterize circular split systems in terms of the novel
$\mathcal I$-intersection closure of a split system and establish the
1-nested analogue of the fundamental ``Splits Equivalence Theorem'' for
phylogenetic trees. 
\end{abstract}

\begin{keyword}
phylogenetic network\sep Buneman graph \sep circular split system
\sep closure \sep median network 
\MSC[2010] 92D15\sep  92B10
\end{keyword}


\end{frontmatter}


\section{Introduction}\label{sec:intro}

A widely accepted evolutionary scenario for 
some economically important crop plants such as wheat
is that their evolution has been shaped by complex 
reticulate processes \cite{M-et-al15}. 
The need for structures capable of representing
the telltale signs left behind by them has 
fueled research into phylogenetic networks which generalize the
commonly used phylogenetic tree model for studying molecular evolution 
(see Fig~\ref{fig:intro-network} for examples
and Section~\ref{sec:prelim} for formal definitions). 
However, despite many years of 
research into phylogenetic networks (see e.\,g.\,the graduate
text books \cite{G14,HRS10})
important questions concerning their structure 
have remained unanswered so far (see e.\,g.\,\cite{GH12}). 
\begin{figure}[h]
\begin{center}
\includegraphics[scale=0.5]{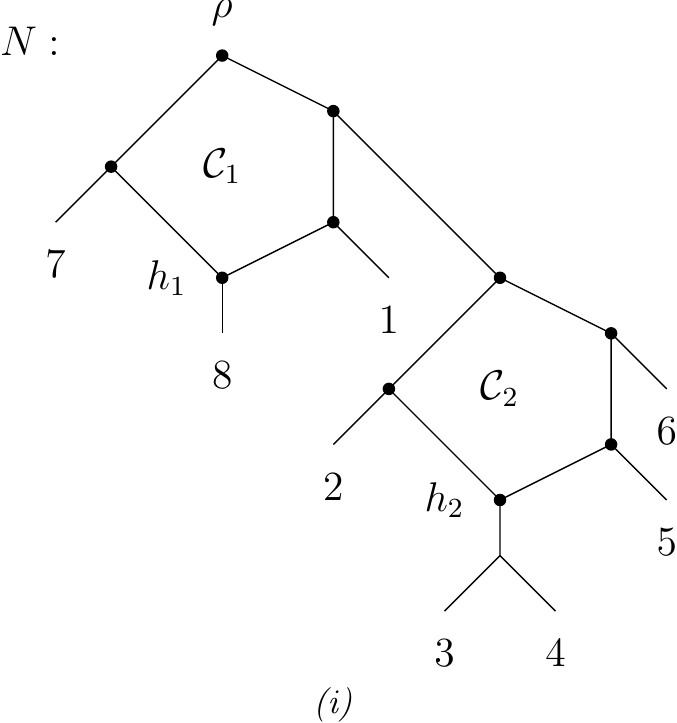}
\,\,\,\,\,\,\,\,\,
\includegraphics[scale=0.5]{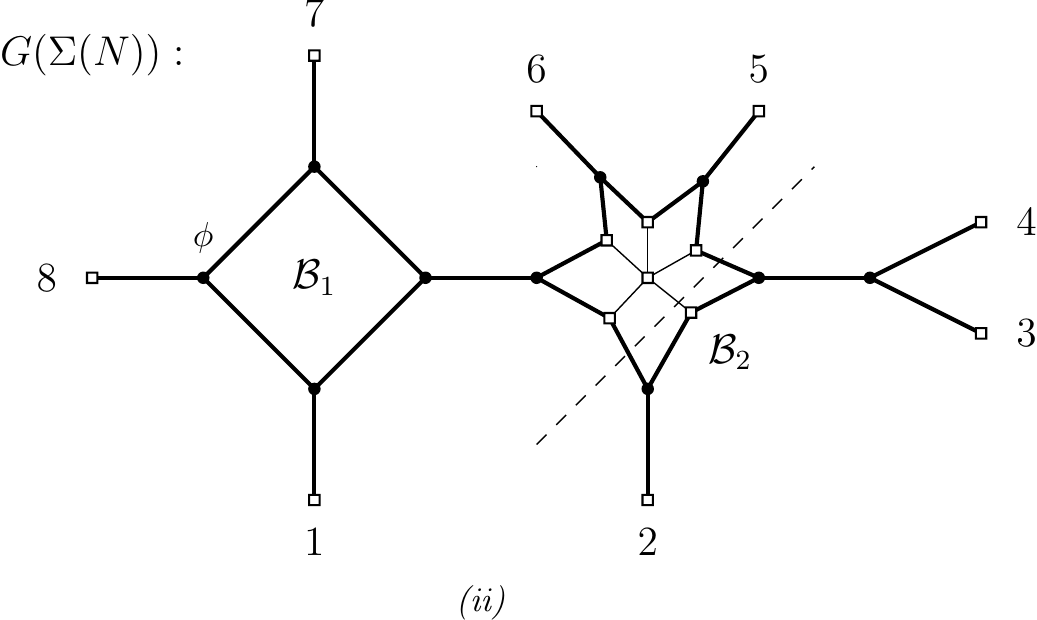}
\caption{\label{fig:intro-network} 
(i) A rooted phylogenetic
network in the form of a level-1 network $N$ on $X=\{1,\ldots,8\}$.
The root is labeled by $\rho$, all edges are directed downwards,
and vertices $h_1$ and $h_2$ represent hypothesised reticulate
evolutioanry events..
 (ii) An unrooted phylogenetic network on $X$ 
in the form of a median network on the
split system induced by the underlying graph of $N$. The dashed line
indicates the split $234|15678$ -- see text for
details.
}
\end{center}
\end{figure}

Mimicking the situation for phylogenetic trees which can either be
rooted or unrooted, phylogenetic networks have been introduced 
in terms of rooted directed acyclic graphs
whose outdegree zero vertices correspond to the taxa (e.g. species)
of interest and also in terms of representations of 
{\em splits systems}, that is, collections of 
bipartitions of the taxa set in question.
In general, the former seem more attractive as they allow the inclusion of 
directionality and thus readily lend themselves to an
interpretation within an evolutionary context. However, 
they suffer from the fact 
that they are generally poorly understood from a combinatorial point of view 
thus hampering our ability to design powerful reconstruction
algorithms for them. Unrooted counterparts of rooted phylogenetic
 networks include the 
popular NeighborNet approach \cite{BM04} as well as median networks 
\cite{BFSR95,BYBSK09}
(sometimes also called {\em Buneman graphs} -- see e.\,g.\,\cite{DHKM11}) 
and related structures (see e.\,g.\,\cite{BD07,BFR99}) and 
the methodology underpinning them is far more advanced. This is
in part due to a rich body of literature surrounding
such graphs which have appeared under various
guises such as co-pair hypergraphs \cite{B89,BG91} 
and have been studied in terms of 
median algebras \cite{BH83}, 1-skeletons of CAT(0) cubical complexes 
\cite{BC96}, retracts of hypercubes \cite{B84}, 
sets of solutions of 2-SAT formulas \cite{BH83}, tight spans of
metric spaces (see e.g. \cite{DHM02} and also the more recent
text book \cite{DHKMS12} 
and the references therein), and
S2 binary convexities \cite{v-D-V84} (see also \cite{KM99}
for a review of median graphs). In an independent line of research, numerous
deep results have also been obtained for the special case that
the (unrooted) phylogenetic network is in fact a tree (see 
e.\,g.\,\cite{SS03,DHKMS12}). 

From a combinatorial point of view, rooted and unrooted phylogenetic trees
can be thought of as certain {\em cluster systems} 
(i.\,e.\,collections of non-empty subsets
of the tree's leaf set) and split systems, respectively, 
and the  Farris Transform
allows one to readily translate between both by ignoring/identifying 
a root vertex and edge-directions (see e.\,g.\,\cite{DHKMS12}
for details). As it turns out, the more
general rooted and unrooted phylogenetic networks
also induce cluster systems and split systems, respectively.
Thus, it is conceivable that a similar strategy 
could be used to foster our understanding of
rooted phylogenetic networks. For this to work however, 
some care needs to be taken since the
graph obtained from a rooted phylogenetic network by ignoring its
root and edge-directions can contain odd
length cycles and thus is not a (splits based) phylogenetic network as
the inner workings of such network require them to  
only contain cycles of even length. 

Intriguingly, any rooted phylogenetic network $N$ also induces a split system 
$\Sigma(N)$ by taking minimal edge cuts in the {\em underlying
graph} $U(N)$ of $N$ i.\,e.\,the graph obtained from $N$ by 
ignoring edge directions and  suppressing
the root in case its out-degree in $N$ is two (see \cite{BC10} and also
\cite{GBP12}). Such graphs can be thought of as intermediate steps
between rooted and unrooted phylogenetic networks and
although they do not contain directionality information 
they still provide valuable information on the number of reticulate
evolutionary events which is a difficult problem of interest in its own right.
Furthermore, $\Sigma(N)$
can also be represented in terms of an unrooted phylogenetic  
network (although 
the way $\Sigma(N)$ is displayed by such a network is fundamentally
different from the way $\Sigma(N)$ is displayed by $N$ -- see 
Figs~\ref{fig:intro-network} and 
\ref{fig:example} for an illustration of this fact in terms of the
Buneman graph $G(\Sigma(N))$ associated to $\Sigma(N) $ where the
way the split $234|15678$ is
displayed by $N$ and  $G(\Sigma(N))$ is indicated in terms of a dashed line). 

\begin{figure}[h]
\begin{center}
\includegraphics[scale=0.5]{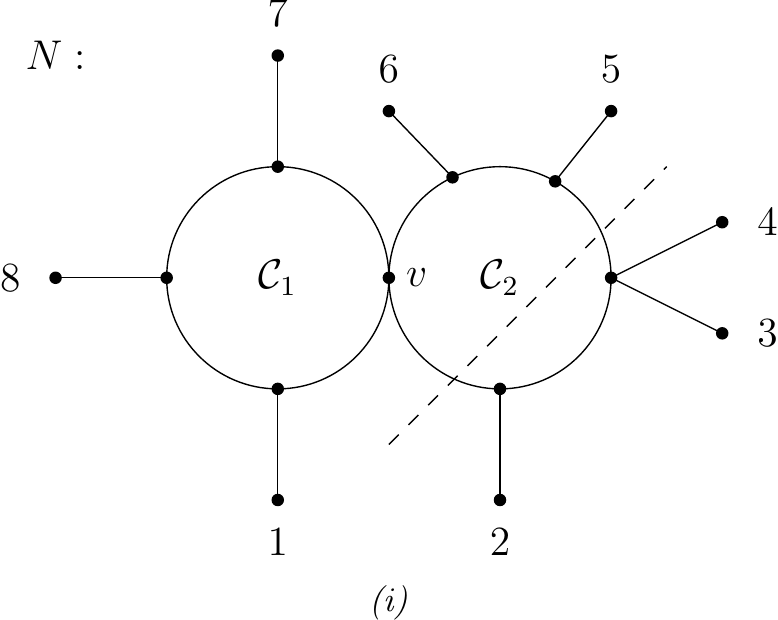}\,\,\,\,\,\
\includegraphics[scale=0.5]{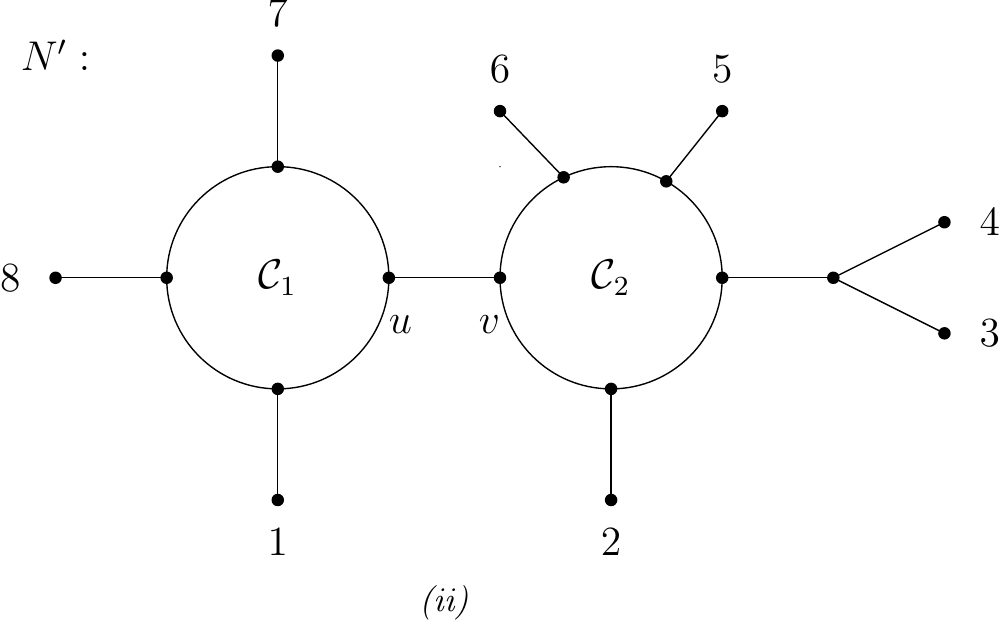}
\caption{\label{fig:example} 
(i) A 1-nested network for which the induced split system is $\Sigma(N')$.
As in Fig.~\ref{fig:intro-network}, the dashed line
indicates the split $234|15678$
(ii) The underlying graph of the rooted level-1
network $N$ depicted in Fig.\ref{fig:intro-network}.
}
\end{center}
\end{figure}

In general, the
split system induced by a rooted phylogenetic network $N$ on some set $X$
can be very complicated. However, in case $N$  is level-1
\cite{JS04,JS08} which essentially means that no two cycles in
 $U(N)$ share a vertex
then the induced split system $\Sigma(N)$ is {\em circular} 
(i.\,e.\,the elements of $X$ can be arranged around a 
cycle $C$ so that the 
split system induced on $X$ by deleting any two edges of $C$ contains
in $\Sigma(N)$) \cite{GBP12}.
This property is central to 
the aforementioned popular NeighborNet approach
and particularly attractive as it guarantees any such split system to be 
representable in the plane in terms of an unrooted phylogenetic
networks without crossing edges. 

Level-1 networks and also the more general  rooted 1-nested networks
(i.\,e.\,the networks obtained by replacing the 
requirement of vertex disjointness between cycles in the underlying graph
in the definition of a level-1 network
by allowing cycles to share at most one vertex -- see \cite{RV09})
 have attracted a considerable amount of attention
in the literature (see e.\,g.\,\cite{HM13} and the references therein).
For ease of readability and also 
reflecting the fact that the focus of this paper lies on
understanding split systems induced by rooted 
level-1 networks we will from now on, unless indicated otherwise,
 refer to the underlying graph of a 
rooted 1-nested network as a 1-nested network.

As is easy to see, any circular
split system on some set $X$ can be represented in terms of a level-1
network $N$ on $X$ by taking the unique cycle of $N$ to
be the aforementioned cycle $C$, attaching to each vertex $v$ of
$C$ a pendant edge $e$ and shifting the element of $X$  labelling $v$
to the degree one vertex of $e$. Although structurally very simple,
these types of networks are of interest in their own right as
they are special types of so called unicyclic networks \cite{SS06} (see
also \cite{RL04} for a biological example)
which have been related to the tree arrangement problem
 in \cite{SS06}. However, as the level-1 network 
depicted in Fig.~\ref{fig:example}(ii) indicates for the split system $\Sigma$
comprising of all splits of the form $x|X-x$\footnote{For ease of readability,
we represent a split $\{A,B\}$ of a set $X$ as $A|B$ where
the order of $A$ and $B$ is irrelevant. Also if $A=\{x_1,\ldots,x_k\}$
and $B=\{x_{k+1},\ldots, x_n\}$ for some $1\leq k\leq n-1$
we write $x_1\ldots x_k|x_{k+1}\ldots x_n$ rather than
$\{x_1,\ldots,x_k\}|\{x_{k+1},\ldots, x_n\}$. }
 where $x\in X:=\{1,\ldots,8\}$ 
and the splits $81|234567$, 
$78|123456$, $781|23456$, $234|56781$, $34|567812$, $345|67812$,
$2345|6781$, $3456|7812$ and $56|78123$, the resulting level-1 network is 
generally not optimal as it displays 
a total of ${|X| \choose 2}$ distinct splits of $X$ whereas the level-1 network
$N$ depicted in that figure also displays all splits of $\Sigma$ and
 postulates fewer additional splits.
Furthermore, the 1-nested network 
pictured  in Fig.~\ref{fig:example}(i) 
also displays $\Sigma$ and so does the subgraph in terms of
bold edges of the Buneman graphs 
$G(\Sigma)$ 
pictured Fig.~\ref{fig:intro-network}(ii).
%
%

As it turns out, this is not a coincidence since
Theorem~\ref{theo:buneman} combined with 
Corollary~\ref{cor:equivalence} ensures that, up to isomorphism
and a mild condition, the network obtained from the Buneman graph pictured
in Fig.~\ref{fig:intro-network}(ii)  by
deleting the central vertex in $\mathcal B_2$, its incident edges and 
suppressing the resulting degree two vertices is in fact optimal. 
Corollary~\ref{cor:equivalence} itself may be
the 1-nested analogue of
the fundamental ``Splits Equivalence Theorem'' for unrooted
phylogenetic trees \cite[Theorem 3.1.4]{SS03} and
is a consequence of  Theorem~\ref{theo:inclu}. The
purpose of that theorem is to establish that the 
novel $\mathcal I$-intersection
closure of a split system $\Sigma$ can be used to obtain a 1-nested 
network $N$ whose induced split system $\Sigma(N)$ does not only
contains $\Sigma$ but is also 
minimum.
Given that a closure is not known for rooted phylogenetic networks
it might be interesting to see if our
closure could be used to this effect
(see also Section~\ref{sec:open-probs}).

 As an
important stepping stone for establishing Theorem~\ref{theo:inclu},
we characterize split systems that are induced by a 
1-nested network in terms of their $\mathcal I$-intersection closure
(Theorem~\ref{theo:equal}) and also when a circular split system that
is $\mathcal I$-intersection closed is (set-inclusion) maximal 
(Theorem~\ref{theo:ordun}).
We remark in passing that Theorem 
\ref{theo:inclu} complements a result by Dinitz and Nutov \cite{DN95,DN-ms}
 who characterized
split systems that can be represented by 1-nested networks in terms of 
``crossings'' and ``cactus models'' and a result by Brandes and
Cornelsen  \cite{BC10} who gave an 
$\mathcal O(f+|X|+|\Sigma|)$ algorithm for deciding if a split system $\Sigma$
on $X$ can be represented by a 1-nested network 
or not and if so constructing such a network where 
$f\leq |X|\times|\Sigma|/2$. Since it is not
difficult to see that crossing split systems are a particular type of
split system that is $\mathcal {I}$-intersection closed their result
is viewable as a consequence.

The outline of the paper is as follows. In the next section we introduce
some relevant basic terminology such as level-1 and 1-nested network. 
In Section~\ref{sec:intersection}, we introduce and study
the $\mathcal I$-closure operation which lies at the
heart of Theorem~\ref{theo:equal}. In  Section~\ref{sec:cir-ord},
we turn our attention
to (set-inclusion) maximal circular split systems
and  establish
 Theorems~\ref{theo:ordun} and \ref{theo:inclu}. 
As part of this, we characterize such collections
in terms of a property of the incompatibility graph
of a split system which we define in that section.
Using insights into the structure of the Buneman
graph presented in \cite{DHKM11}, we establish Theorem~\ref{theo:buneman}
in Section~\ref{sec:buneman}. We conclude with some open problems in 
Section~\ref{sec:open-probs}.

\section{Preliminaries}\label{sec:prelim}
In this section, we present relevant basic definitions concerning 
splits and
phylogenetic networks. Throughout the paper, we assume that 
 $X$ is a finite set with  $n\geq 3$ elements and that, unless
stated otherwise, split systems are non-empty. 

\subsection{Splits and split systems}
For all subsets $A\subseteq X$, we put
 $\bar{A}=X-A$. Furthermore, 
for all elements $x\in X$ and all splits $S$ of $X$, we denote 
by $S(x)$ the element of $S$ that contains $x$.  
The {\em size} of a split $A|B$ is defined as $\min\{|A|,|B|\}$
and a split $S$ is called \emph{trivial} if its size is one.
Two splits $S_1$ and $S_2$ of $X$ are called {\em compatible}
if there exists some $A_1\in S_1$ and some $A_2\in S_2$ such that
$A_2\subsetneq A_1$ and {\em incompatible} otherwise.
More generally, a split system
$\Sigma$ on $X$ is called {\em compatible} if any
two splits in $\Sigma$ are compatible and {\em incompatible}
otherwise.

%

Suppose $x_1,x_2,\ldots,x_n,x_{n+1}:=x_1$
is a circular ordering of the elements of $X$
(where we take indices modulo $n$). Then
for all $i,j\in\{1,\ldots,n\}$ we call the
subsequence $x_i,x_{i+1},\ldots, x_j$ the {\em interval}
from $x_i$ to $x_j$ and denote it by $[x_i, x_j]$.  
We say that a split system $\Sigma$ on $X$ is 
\emph{circular} if there exists a circular ordering
$x_1, x_2,..., x_n, x_{n+1}=x_1$ of the elements of $X$ such 
that for every split $S=A|B\in \Sigma$ there exists an 
$i,j\in\{1,\ldots,n\}$ such that $A=[x_i,x_j]$
and $B=[x_{j+1}, x_{i-1}]$. Note that there are $(n-1)!$ circular orderings
for $X$ and that a circular split system on $X$ has size 
at most $n(n-1)/2$.

\subsection{Phylogenetic networks}

Suppose $G$ is a simple connected graph. 
Then the \emph{cyclomatic number} of $G$ 
is the minimum number of edges that need to be 
removed from $G$ to obtain a tree. A \emph{cut-edge} of $G$ 
is an edge $e$ whose removal disconnects $G$. We call $e$ 
 \emph{trivial} if it is incident to a {\em leaf} $v$ of $G$, 
that is, the degree of $v$ is one. 

We call a simple connected graph $N$ a
\emph{phylogenetic network (on $X$)} if $X$ 
is the set of leaves of $N$, every other vertex has degree 
at least three, and every cycle has length at least four.  The reason
for the latter requirement is that a cycle of length three displays the same
split system as the star tree with three leaves (but not the same
multi-set of splits) which is undesirable from a uniqueness point of view.
As for rooted phylogenetic networks, we call a 
phylogenetic network $N$ \emph{simple}
if all cut-edges of $N$ are trivial, {\em level-1} 
if every vertex but the leaves have degree three and
the maximum cyclomatic number of the bridgeless 
connected components of $N$ is one 
\cite{GBP12}, and, inspired by \cite{RV09},  {\em 1-nested} if
$N$ can be obtained from a level-1 network by collapsing 
(non-trivial) cut-edges.
For example, the graph $N'$ depicted in Fig.~\ref{fig:example}(i)
is a level-1 network on $X=\{1,\ldots, 8\}$ and the
graph pictured in Fig.~\ref{fig:example}(ii) is a 1-nested
network on $X$ as it can be obtained from $N'$ by collapsing the edge
$\{u,v\}$. 

Finally, we say that two phylogenetic networks $N$ and $N'$ on $X$ are 
{\em isomorph}
if there exists a graph isomorphism between $N$ and $N'$ that is the 
identity on $X$.

\subsection{Displaying splits}
Suppose that $N$ is a phylogenetic network on  $X$.
Then we say that a split $S=A|B$ of $X$ is \emph{displayed} by 
$G$ if there exists a set-inclusion minimal cut of $G$, that is, a set 
$E_S$ of edges of $G$ such that the deletion of the edges
in $E_S$ disconnects $G$ into two connected 
components, one of whose set of leaves is $A$ and the other is $B$.
More generally, we say that a split system $\Sigma$ is {\em displayed} by $N$
if every split of $\Sigma$ is displayed by $N$, that is,
$\Sigma\subseteq \Sigma(N)$. Also, 
we say that a split $S\in \Sigma(N)$ is {\em displayed 
by a cycle $C$} of $ N$ if $E_S$ is contained in the edge set of $C$.

Note that in case $N$ is a 1-nested network and $S\in \Sigma(N)$ 
then $|E_S|\in \{1,2\}$. 
Furthermore, note that if $e=\{u,v\} $ is the unique element
in $E_S$ and neither $u$ nor $v$ is contained in a cycle of $N$ then
$e$ must be a cut-edge of $N$ and the multiplicity of the split $S_e$
induced by deleting $e$ is one. Note also that if
$e=\{u,v\} $ is a cut-edge of $N$ where $u$ or $v$ is
contained in a cycle $C$ of $N$, say $u$, then $S_e$ is also induced
by deleting the edges of $C$ incident with $u$. Thus, the multiplicity
of a split induced by a 1-nested network can either be one, 
two, or three. 
We call
a split $S$ of multiplicity two or more in a maximal partial-resolution
of $N$ an {\em m-split} of $N$
(or more precisely of $C$ if $C$ is the cycle of $N$
that displays $S$). Finally,
note that the split system induced by a 1-nested network $N$ on $X$
is the same as the resolution of $N$ to a 1-nested network by repeatedly 
applying the following two replacement operations (and their complements
which we denote by (R1') and (R2'), respectively):
\begin{enumerate}
\item[(R1)] a vertex $v$ of a cycle of $N$ incident with $l\geq 2$ 
non-cycle edges $e_1,\ldots,e_l$ is replaced by an edge one of whose vertices is
$v$ and the other is incident with $e_1,\ldots,e_l$ and vice versa, and 
\item[(R2)] a cut-vertex $v$ shared by two cycles $C_1$ and $C_2$ is replaced
by a cut-edge one of whose vertices  is contained in $C_1$ and the 
other in $C_2$.
\end{enumerate}
However, the multi-sets of splits induced by both networks are clearly 
different. We call the vertex $v$ in (R1) or (R2) {\em partially-resolved}.
More generally, we call a
1-nested network $N'$ 
a {\em partial-resolution} of a 1-nested network $N$ if $N'$ can be
obtained from $N$ by partially resolving vertices of $N$.
Moreover, we call a partial-resolution $N'$ of $N$ a {\em maximal 
partial-resolution} of $N$ if the only way to obtain a partial-resolution
of $N'$ is to apply (R1') or (R2'). In this case, we also call $N'$
{\em maximal partially-resolved}.

To illustrate some of these definitions consider the 1-nested network $N$
on $X=\{1,\ldots, 8\}$ depicted in  Fig.~\ref{fig:example}(i). 
Then, the splits $7|X-\{7\}$, $8|X-\{8\}$, $1|X-\{1\}$ and 
$781|23456$ are displayed by $N$. In fact, they are 
m-splits for the cycle $\mathcal C_1$ 
of $N$. 
Furthermore, $N'$ is a partial-resolution of $N$  and 
$\overline{\Sigma(N)}$ only contains splits of multiplicity one or two. 

 
\section{Characterizing of 1-nested networks in terms of
$\mathcal I$-intersections}\label{sec:intersection}

In this section, we introduce and study the $\mathcal I$-intersection
 closure of a split system which turns out
to be key for our characterization of
1-nested networks in terms of split systems which we 
present in Theorem~\ref{theo:equal}. 
We start with introducing the concept of an intersection
between splits.

Suppose $S_1$ and $S_2$ are two distinct splits of $X$ and
 $A_i \in S_i$, $i=1,2$,
such that $A_1 \cap A_2 \neq \emptyset$. Then we call the
split $A_1 \cap A_2 | \bar{A}_1 \cup \bar{A}_2$ of $X$ associated
to $\{S_1,S_2\}$ an {\em intersection of $S_1$ and $S_2$ (with respect to 
$A_1$ and $A_2$)}. We denote the
set of all splits obtained by taking intersections
of  $S_1$ and $S_2$ by $int(S_1,S_2)$ and write
$int(S_1,S_2)$ rather than $int(\{S_1,S_2\})$. 
Furthermore, if $S_1$ and $S_2$ are
incompatible then we refer to the intersection of $S_1$ and $S_2$
as {\em incompatible intersection}, or \emph{$\mathcal{I}$-intersection} 
for short, and denote it by  $\iota(S_1,S_2)$ rather than $int(S_1,S_2)$.

Clearly, if
$S_1$ and $S_2$ are compatible then $|int(S_1,S_2)|=3$
and  $S_1,S_2\in int(S_1,S_2)$.
However, if $S_1$ and $S_2$ are incompatible then 
  $\iota(S_1,S_2)$
 is compatible and of size four, $S_1,S_2\not\in\iota(S_1,S_2)$,
and every split in $\iota(S_1,S_2)$ is compatible with $S_1$ and $S_2$. 
See Fig.~\ref{fig:schema1} an illustration. 

\begin{figure}[h]
\begin{center}
\includegraphics[scale=0.8]{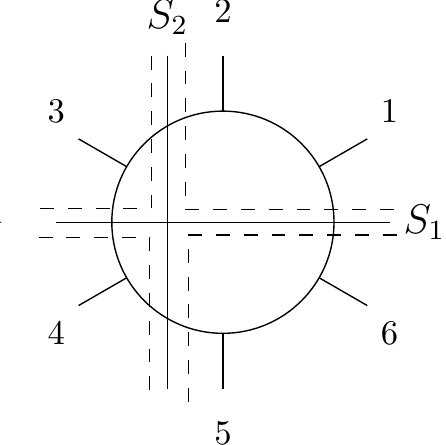}
\label{fig:schema1}
\caption{For a simple level-1 network on $\{1,\ldots, 6\}$
we depict the splits $S_1$ and $S_2$ in terms of
two straight bold lines  and 
the four splits   that make up $\iota(S_1,S_2)$ in terms of four dashed lines.}
\end{center}
\end{figure}

Fig.~\ref{fig:schema1} suggests that every split 
in $\iota(S_1,S_2)$ is displayed by the same cycle
that displays $S_1$ and $S_2$. That this is indeed the case is
the purpose of Proposition~\ref{prop:intnet}. To state it in its full 
generality we next associate to a split system $\Sigma$ of $X$ 
the {\em intersection closure} $Int(\Sigma)$ of $\Sigma$, 
that is, $Int(\Sigma)$ is a 
(set-inclusion) minimal split system that contains 
$\Sigma$ and is closed by intersection.
For example, for $\Sigma=\{12|345, 23|451\}$
we have $Int(\Sigma)=\Sigma\cup\{1|2345, 2|3451, 
3|4512, 13|452, 123|45\}$.

We start our analysis of $Int(\Sigma)$ with remarking that 
$Int(\Sigma)$ is indeed a closure, that is, $Int(\Sigma)$ trivially satisfies
the following three properties
\begin{enumerate}
\item[(C1)] $\Sigma\subseteq Int(\Sigma)$.
\item[(C2)] $Int(Int(\Sigma))= Int(\Sigma)$.
\item[(C3)] If $\Sigma'$ is a split system on $X$ for which
$\Sigma\subseteq \Sigma'$ holds then 
$Int(\Sigma)\subseteq Int(\Sigma')$.
\end{enumerate}

The proof of the next lemma is a straight forward consequence 
of our definitions.

\begin{lemma}
Suppose $S_1$, $S_2$, and $S_3$ are three splits on $X$,
such that both
$\{S_1, S_2\}$ and $\{S_2,S_3\}$ are incompatible.
Then there exists at least two splits in $\iota(S_1, S_2)$ that 
are incompatible with $S_3$.
\end{lemma}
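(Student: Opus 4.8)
The plan is to describe the four splits constituting $\iota(S_1,S_2)$ explicitly and then test each of them against $S_3$ straight from the definition of incompatibility. Write $S_i=A_i|\bar A_i$ for $i=1,2,3$ and set $Q_1=A_1\cap A_2$, $Q_2=A_1\cap\bar A_2$, $Q_3=\bar A_1\cap A_2$, $Q_4=\bar A_1\cap\bar A_2$. Since $S_1$ and $S_2$ are incompatible these four ``quadrants'' are non-empty and partition $X$, and by the definition of an $\mathcal I$-intersection we have $\iota(S_1,S_2)=\{T_1,T_2,T_3,T_4\}$, where $T_k$ denotes the split $Q_k|\overline{Q_k}$ isolating $Q_k$. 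Thus the statement to prove is that at least two of $T_1,\dots,T_4$ are incompatible with $S_3$.

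The first step is to reduce incompatibility of a single $T_k$ with $S_3$ to a condition on $Q_k$ alone. By definition $T_k$ is incompatible with $S_3$ exactly when all four of $Q_k\cap A_3$, $Q_k\cap\bar A_3$, $\overline{Q_k}\cap A_3$, $\overline{Q_k}\cap\bar A_3$ are non-empty. The two conditions involving $\overline{Q_k}$ are automatic here: one checks that $\overline{Q_k}$ always contains a whole side of $S_2$, namely $\bar A_2=Q_2\cup Q_4$ when $k\in\{1,3\}$ and $A_2=Q_1\cup Q_3$ when $k\in\{2,4\}$, while the incompatibility of $S_2$ and $S_3$ guarantees that each of $A_2,\bar A_2$ meets both $A_3$ and $\bar A_3$. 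Hence $\overline{Q_k}$ meets both sides of $S_3$ for every $k$, and therefore $T_k$ is incompatible with $S_3$ if and only if $Q_k$ meets both $A_3$ and $\bar A_3$, i.e.\ iff $Q_k$ is \emph{split} by $S_3$. This is the ``straightforward consequence of the definitions'' and reduces the lemma to the assertion that $S_3$ splits at least two of the four quadrants.

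The remaining step, which is where essentially all the work lies, is to produce these two split quadrants. Grouping the quadrants into the two sides $A_2=Q_1\cup Q_3$ and $\bar A_2=Q_2\cup Q_4$ of $S_2$, incompatibility of $S_2$ and $S_3$ shows that $A_3$ and $\bar A_3$ each meet each of these pairs; inside the pair $\{Q_1,Q_3\}$ this means either one of $Q_1,Q_3$ is itself split by $S_3$, or one lies in $A_3$ and the other in $\bar A_3$, and similarly for $\{Q_2,Q_4\}$. The obstacle is exactly to exclude the configuration in which both pairs are ``purely separated'' (one quadrant inside $A_3$, the other inside $\bar A_3$), since then no quadrant is split and the count collapses. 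I would rule this out using the circular structure that underlies the split systems of interest: fixing a circular ordering realising $S_1,S_2,S_3$ as interval splits, the four non-empty quadrants appear as four consecutive arcs and each split becomes a chord joining its two cut points. Incompatibility of $S_2$ and $S_3$ means these chords cross, which forces the two cut points of $S_3$ into two distinct arcs; each such arc is then met by both $A_3$ and $\bar A_3$ and so is split by $S_3$. Combined with the reduction above this yields (in fact exactly) two elements of $\iota(S_1,S_2)$ incompatible with $S_3$, and I would present the argument in the three stages just described, emphasising that the chord-crossing count is the crux.
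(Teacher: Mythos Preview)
Your reduction in the first step is clean and correct: using only the incompatibility of $S_2$ and $S_3$ you show that $T_k=Q_k\mid\overline{Q_k}$ is incompatible with $S_3$ if and only if the quadrant $Q_k$ meets both sides of $S_3$. The paper itself gives no argument beyond the sentence ``the proof \ldots\ is a straight forward consequence of our definitions'', so there is nothing to compare against at that level.

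The difficulty is in your second step. You invoke a circular ordering, which is \emph{not} among the hypotheses of the lemma; and even granting circularity, your chord-crossing count has a gap. You argue that incompatibility of $S_2$ and $S_3$ forces the two cut points of $S_3$ into two distinct quadrant arcs. But the hypotheses say nothing about the pair $\{S_1,S_3\}$, so a cut point of $S_3$ may sit exactly at a boundary between two adjacent arcs --- namely at one of the two cut points of $S_1$ --- and then neither adjacent arc is split by $S_3$ at that point. Concretely, take $X=\{1,\dots,8\}$ with the natural circular order, $S_1=1234\mid 5678$, $S_2=3456\mid 1278$, and $S_3=234\mid 15678$. The quadrants are $Q_1=\{3,4\}$, $Q_2=\{1,2\}$, $Q_3=\{5,6\}$, $Q_4=\{7,8\}$; one cut point of $S_3$ lies between $4$ and $5$, which is a cut point of $S_1$, and the other lies inside $Q_2$. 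Here $\{S_1,S_2\}$ and $\{S_2,S_3\}$ are incompatible, yet only $Q_2$ meets both sides of $S_3$, so only \emph{one} split in $\iota(S_1,S_2)$, namely $12\mid 345678$, is incompatible with $S_3$. Thus the lemma as stated fails even for circular splits, and your argument cannot be repaired without an additional hypothesis (for instance that $\{S_1,S_3\}$ is also incompatible, which in the circular picture rules out exactly the boundary case). Dropping circularity, the ``diagonal'' split $S_3=Q_1\cup Q_4\mid Q_2\cup Q_3$ gives a counterexample in which \emph{no} element of $\iota(S_1,S_2)$ is incompatible with $S_3$.

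In short: your reduction is right and is presumably what the authors had in mind, but the final counting step does not go through from the stated hypotheses, and the lemma appears to be misstated in the paper.
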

%

The next lemma implies that the intersection closure
of a split system is well-defined.

\begin{lemma}\label{lem:closure}
Suppose $\Sigma$ is  a split system on $X$
and  $\Sigma' $ is a further (set-inclusion) minimal superset of 
$\Sigma$ that is closed by intersection.  
Then $\Sigma' =Int(\Sigma)$ must hold.
\end{lemma}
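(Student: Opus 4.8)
The plan is to establish $\Sigma' = Int(\Sigma)$ by showing mutual set-inclusion, exploiting the fact that $Int(\Sigma)$ is defined to be a (set-inclusion) minimal split system containing $\Sigma$ and closed under intersection, while $\Sigma'$ is assumed to be \emph{another} such minimal superset. First I would verify that $Int(\Sigma) \subseteq \Sigma'$. Since $\Sigma'$ is a superset of $\Sigma$ that is closed by intersection, $\Sigma'$ is in particular \emph{a} split system containing $\Sigma$ and closed by intersection; because $Int(\Sigma)$ is a minimal such system, any closed superset of $\Sigma$ must contain $Int(\Sigma)$, giving $Int(\Sigma) \subseteq \Sigma'$. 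Here the word ``minimal'' must be read as ``smallest'' (i.e.\ contained in every other closed superset), and the crux of this direction is justifying that $Int(\Sigma)$ really is the smallest rather than merely a minimal element of the poset of closed supersets.

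The reverse inclusion $\Sigma' \subseteq Int(\Sigma)$ then follows from the minimality of $\Sigma'$: I would observe that $Int(\Sigma)$ is itself a superset of $\Sigma$ (by property (C1)) that is closed by intersection (by property (C2)), so $Int(\Sigma)$ is one of the closed supersets of $\Sigma$. Since $\Sigma' \subseteq Int(\Sigma)$ would contradict the minimality of $\Sigma'$ unless equality holds — more carefully, from $Int(\Sigma) \subseteq \Sigma'$ together with $\Sigma'$ being a \emph{minimal} closed superset and $Int(\Sigma)$ being a closed superset contained in $\Sigma'$ — we conclude $\Sigma' = Int(\Sigma)$. In effect both inclusions are forced the moment one shows that the collection of closed supersets of $\Sigma$ has a unique minimal (equivalently, least) element.

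The main obstacle, and the only real content, is the well-definedness issue lurking behind the word ``minimal'': a priori the poset of intersection-closed supersets of $\Sigma$ could have several incomparable minimal elements, in which case $Int(\Sigma)$ would be ambiguous. The clean way to dispose of this is to note that the family of intersection-closed split systems is itself closed under arbitrary intersection: if $\{\Sigma_\alpha\}$ are all closed by intersection and each contains $\Sigma$, then $\bigcap_\alpha \Sigma_\alpha$ still contains $\Sigma$ and is still closed (if $S_1, S_2 \in \bigcap_\alpha \Sigma_\alpha$, then every intersection of $S_1$ and $S_2$ lies in each $\Sigma_\alpha$, hence in the intersection). Taking the intersection over \emph{all} closed supersets of $\Sigma$ then produces a \emph{least} such system, which coincides with $Int(\Sigma)$ and with the given $\Sigma'$. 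I would therefore structure the proof around this intersection-of-all-closures argument, from which the two inclusions above drop out immediately, making the uniqueness of the minimal superset — and hence the equality $\Sigma' = Int(\Sigma)$ — transparent.
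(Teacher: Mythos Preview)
Your argument is correct and takes a genuinely different route from the paper. The paper proceeds \emph{bottom-up}: it asserts that any minimal closed superset $\Sigma'$ can be reached from $\Sigma$ by a finite chain $\Sigma=\Sigma_0\subsetneq\Sigma_1\subsetneq\cdots\subsetneq\Sigma_k=\Sigma'$ in which each $\Sigma_i$ arises from $\Sigma_{i-1}$ by adjoining the intersections of a single pair of splits, and then shows by induction on $i$ that every $\Sigma_i$ sits inside $Int(\Sigma)$; swapping the roles of $\Sigma'$ and $Int(\Sigma)$ gives the other inclusion. Your approach is \emph{top-down}: you observe that the class of intersection-closed supersets of $\Sigma$ is itself stable under arbitrary intersection, so it has a genuine least element, and any set-inclusion minimal member must coincide with that least element. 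What your approach buys is that it dispenses entirely with the inductive bookkeeping and makes the well-definedness of $Int(\Sigma)$ a one-line consequence of a general closure-operator fact; what the paper's approach buys is a slightly more explicit picture of how $Int(\Sigma)$ is generated from $\Sigma$ by repeated pairwise intersection, which is closer in spirit to how the operator is actually used later in the paper. Either argument is perfectly adequate here.
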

\begin{proof}
Since $\Sigma'$ contains $\Sigma$ and is intersection closed 
we can obtain $\Sigma'$ via  a (finite) sequence 
$\Sigma=\Sigma_0\subsetneq \Sigma_1\subsetneq \Sigma_2\subsetneq \ldots
\subsetneq \Sigma_k=\Sigma'$, $k\geq 1$,
of split systems $\Sigma_i$ such that, for all $1\leq i\leq k$,
$\Sigma_{i}:=\Sigma_{i-1}\cup \iota(P_{i})$ where $P_{i}$
is a 2-set contained in $\Sigma_{i-1} $ and $\iota(P_{i})$ is not
contained in $\Sigma_{i-1} $. 
We show by induction on $i$ that $\Sigma_{i}\subseteq Int(\Sigma)$ holds.

Clearly, if $i=0$ then  $\Sigma_0=\Sigma$ is contained in $Int(\Sigma)$.
So assume that  $\Sigma_{i}\subseteq Int(\Sigma)$ holds for all 
$1\leq i\leq r$, for some $1\leq r\leq k$, and that  
$\Sigma_r$ is obtained from $\Sigma_{r-1}$ by intersection of two 
splits $S_1, S_2\in \Sigma_{r-1}$. Since, 
by induction hypothesis,
$\Sigma_{r-1}\subseteq Int(\Sigma)$ it follows that 
$S_1$ and $S_2$ are contained in $Int(\Sigma)$. Since
$Int(\Sigma)$ is intersection-closed, $\iota(S_1,S_2)\subseteq 
Int(\Sigma)$ follows. Hence,
$\Sigma_r=\Sigma_{r-1}\cup \iota(S_1,S_2)\subseteq Int(\Sigma)$,
as required. By induction, it now follows that  
$\Sigma'\subseteq Int(\Sigma) $. Reversing the roles of $\Sigma'$ 
and $Int(\Sigma)$ in the previous argument implies that 
$Int(\Sigma)\subseteq \Sigma'$ holds too which implies
$\Sigma'= Int(\Sigma)$.
\end{proof}

 We remark in passing that similar arguments as the ones used in the 
proof of Lemma~\ref{lem:closure}
 also imply that the $\mathcal{I}$-intersection closed (set-inclusion)
minimal superset $\mathcal{I}(\Sigma)$
of a split system $\Sigma$ is also well-defined (and obviously
satisfies Properties (C1) -- (C3)).
We will refer to $\mathcal{I}(\Sigma)$ as 
{\em $\mathcal{I}$-intersection
closure of $\Sigma$}. 

We next turn our attention to the 
$\mathcal I$-intersection closure of a 
split systems induced by a 1-nested network.

\begin{proposition}\label{prop:intnet}
Suppose $N$ is a 1-nested network on $X$ and 
$S_1$ and $S_2$ are two incompatible splits contained in $\Sigma(N)$.
Then  $\iota(S_1,S_2)\subseteq \Sigma(N)$.
\end{proposition}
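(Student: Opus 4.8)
The plan is to show that two incompatible splits of $\Sigma(N)$ are necessarily displayed by one and the same cycle $C$ of $N$, and then to exploit the cyclic structure of $C$ to see that every split in $\iota(S_1,S_2)$ is again displayed by $C$ and hence lies in $\Sigma(N)$.

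First I would establish that two \emph{incompatible} splits of $\Sigma(N)$ must arise from a common cycle. Since $N$ is 1-nested we know $|E_{S_i}|\in\{1,2\}$ for $i=1,2$. If $|E_{S_i}|=1$ then the corresponding edge is a cut-edge (bridge) of $N$, and the splits induced by bridges are pairwise compatible; moreover, since any cycle of $N$ lies inside a single component of $N-e$ for every bridge $e$, a bridge split is compatible with every split displayed by a cycle. If $|E_{S_i}|=2$ then, because the non-trivial $2$-connected blocks of a 1-nested network are precisely its cycles (and a minimal cut of size two cannot use a bridge), both edges of $E_{S_i}$ lie on a single cycle, say $C_i$. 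Finally, two splits displayed by distinct cycles $C_1\neq C_2$ are compatible: as $C_1$ and $C_2$ share at most one vertex, the whole of $C_2$ lies on one side of any split displayed by $C_1$. Combining these three observations, $S_1$ and $S_2$ can be incompatible only if $|E_{S_1}|=|E_{S_2}|=2$ and $C_1=C_2=:C$.

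Next I would set up the interval description of the splits displayed by $C$. Writing $v_1,\ldots,v_m$ for the vertices of $C$ in cyclic order and letting $X_i$ denote the set of leaves of $N$ reachable from $v_i$ without traversing an edge of $C$, the sets $X_1,\ldots,X_m$ partition $X$ into nonempty blocks (nonempty since every cycle vertex has degree at least three), and deleting two edges of $C$ yields exactly the splits whose two parts are unions of cyclically contiguous blocks $X_i$. Thus the splits displayed by $C$ correspond bijectively to the intervals of the circular ordering $X_1,\ldots,X_m$, and in particular $S_1=A_1|\bar A_1$ and $S_2=A_2|\bar A_2$ correspond to two such intervals $I_1$ and $I_2$. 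The final step is to observe that incompatibility forces $I_1$ and $I_2$ to cross, i.e.\ their four endpoints interleave around the circle, which is equivalent to all four of $A_1\cap A_2$, $A_1\cap\bar A_2$, $\bar A_1\cap A_2$, $\bar A_1\cap\bar A_2$ being nonempty (consistent with $|\iota(S_1,S_2)|=4$). When $I_1$ and $I_2$ cross, each of these four sets is itself a union of a cyclically contiguous run of the $X_i$, hence an interval; and by de~Morgan each split in $\iota(S_1,S_2)$ has the form $(A_1\cap A_2)\,|\,\overline{A_1\cap A_2}$ with $A_1\cap A_2$ an interval. Therefore each of the four splits is displayed by $C$ and so belongs to $\Sigma(N)$.

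I expect the main obstacle to be the first step, namely rigorously ruling out incompatibility between splits coming from different blocks (bridge/bridge, bridge/cycle, and cycle/cycle), since this is where the 1-nested hypothesis -- cycles meeting in at most one vertex and all blocks being cycles or bridges -- is really used. Once this reduction to a single cycle is in place, the interval bookkeeping and the identification of each $\mathcal I$-intersection split as a cycle split are routine.
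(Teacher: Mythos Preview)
Your proposal is correct and follows essentially the same approach as the paper: reduce to a single cycle $C$ and then read off the four $\mathcal I$-intersection splits from the cyclic structure. The only difference is presentational---you justify in detail why incompatible splits must lie on a common cycle (the paper merely asserts this) and you phrase the second step via the interval partition $X_1,\ldots,X_m$, whereas the paper argues directly with the interleaved edge pairs $\{e_1,e_2\},\{e_2,e_1'\},\{e_1',e_2'\},\{e_2',e_1\}$.
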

\begin{proof}
Note first that two splits $S$ and $S'$ 
induced by a 1-nested network are incompatible if and only if 
they are displayed by pairs of edges in the same cycle $C$ of $N$. 
For $i=1,2$, let $\{e_i,e'_i\}$ denote the edge set whose deletion
induces the split $S_i$. Then since $S_1$ and $S_2$ are incompatible,
we have $\{e_1,e'_1\}\cap\{e_2,e'_2\}=\emptyset$ and none of
the connected components of $N$ obtained by deleting 
$e_i$ and $e'_i$ contains both $e_j$ and $e'_j$, for all 
$i,j\in \{1,2\}$ distinct. Without loss of
generality, we may assume
that when starting at edge $e_1$ and moving clockwise through $C$ 
we first encounter $e_2$, then $e_1'$ and, finally $e_2'$
before returning to $e_1$. Then it is straight forward to see
that a split in $\iota(S_1,S_2)$ is displayed by one
of the edge sets $\{e_1,e_2\}$, $\{e_2,e_1'\}$, $\{e_1',e_2'\}$, 
and $\{e_2',e_1\}$. Thus, $\iota(S_1,S_2)\subseteq \Sigma(N)$.
\end{proof}

Combined with
the definition of the $\mathcal I$-intersection closure, we obtain

\begin{corollary}\label{cor:cii}
The following statements hold:
\begin{enumerate}
\item[(i)]
If $\Sigma$ is a circular split system for some circular ordering of $X$ then 
$\mathcal{I}(\Sigma)$ is also circular for that ordering.
\item[(ii)] If $N$ is a 1-nested network on $X$ then $\Sigma(N)$ is 
$\mathcal{I}$-intersection closed. Furthermore,  $N$
displays a split system $\Sigma$ on $X$ if and only if $N$ displays 
$\mathcal{I}(\Sigma)$.
\end{enumerate}
\end{corollary}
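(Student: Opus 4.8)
The plan is to prove Corollary~\ref{cor:cii} directly from Proposition~\ref{prop:intnet} together with the definition of the $\mathcal{I}$-intersection closure, treating parts (i) and (ii) separately but by the same underlying mechanism. The key observation is that $\mathcal{I}(\Sigma)$ is built up by repeatedly adjoining $\mathcal{I}$-intersections $\iota(S_1,S_2)$ of pairs of incompatible splits already present, exactly as described in the proof of Lemma~\ref{lem:closure} (with $int$ replaced by $\iota$). So in each case it suffices to show that the relevant property is preserved under a single step of adjoining $\iota(S_1,S_2)$, and then to invoke the inductive construction of the closure.

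For part (i), suppose $\Sigma$ is circular with respect to a fixed circular ordering $x_1,\ldots,x_n$ of $X$. First I would note that an $\mathcal{I}$-intersection $\iota(S_1,S_2)$ of two incompatible circular splits consists of four splits, each displayed (in the level-1 network built from the ordering) by a pair of edges of the unique cycle $C$; by the argument in Proposition~\ref{prop:intnet}, each such split is of the form $[x_i,x_j]\,|\,[x_{j+1},x_{i-1}]$ and is therefore itself circular for the same ordering. Thus adjoining $\iota(S_1,S_2)$ keeps every split circular for that ordering, and induction along the chain $\Sigma=\Sigma_0\subsetneq\cdots\subsetneq\Sigma_k=\mathcal{I}(\Sigma)$ gives that $\mathcal{I}(\Sigma)$ is circular for the same ordering.

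For part (ii), the first assertion that $\Sigma(N)$ is $\mathcal{I}$-intersection closed is essentially immediate from Proposition~\ref{prop:intnet}: if $S_1,S_2\in\Sigma(N)$ are incompatible then $\iota(S_1,S_2)\subseteq\Sigma(N)$, which is precisely the defining property of being $\mathcal{I}$-intersection closed. For the second assertion, the forward direction is trivial since $\Sigma\subseteq\mathcal{I}(\Sigma)$, so if $N$ displays $\mathcal{I}(\Sigma)$ it certainly displays $\Sigma$. For the converse, suppose $N$ displays $\Sigma$, i.e.\ $\Sigma\subseteq\Sigma(N)$. Since $\Sigma(N)$ is $\mathcal{I}$-intersection closed (just shown) and contains $\Sigma$, the minimality of $\mathcal{I}(\Sigma)$ among $\mathcal{I}$-intersection closed supersets of $\Sigma$ (property (C1)--(C3), analogous to Lemma~\ref{lem:closure}) forces $\mathcal{I}(\Sigma)\subseteq\Sigma(N)$, so $N$ displays $\mathcal{I}(\Sigma)$.

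The only mild subtlety I anticipate is bookkeeping the inductive step in part (i): one must be sure that a \emph{new} split produced by an $\mathcal{I}$-intersection is circular for the \emph{original} ordering, not merely for some ordering. This is exactly what Proposition~\ref{prop:intnet} secures, since it places all four splits of $\iota(S_1,S_2)$ on the same cycle $C$ whose cyclic vertex order is the fixed ordering; so the apparent obstacle dissolves once Proposition~\ref{prop:intnet} is invoked, and the rest is a routine closure-induction argument with no genuine computation.
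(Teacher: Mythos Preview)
Your proposal is correct and follows exactly the approach the paper intends: the corollary is stated there as an immediate consequence of Proposition~\ref{prop:intnet} and the definition of the $\mathcal{I}$-intersection closure, and you have simply spelled out the closure-induction that the paper leaves implicit. The only cosmetic remark is that for part~(i) you can bypass the inductive chain entirely by observing that the simple level-1 network $N$ built from the given ordering has $\Sigma(N)$ equal to the maximal circular split system for that ordering, whence $\Sigma\subseteq\Sigma(N)$ and part~(ii) immediately gives $\mathcal{I}(\Sigma)\subseteq\Sigma(N)$.
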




The next observation is almost trivial and is used in the proof of
Theorem~\ref{theo:equal}.

\begin{lemma}\label{lem:almost-trivial}
Suppose $x\in X$ and 
 $S_1$, $S_2$, and $S_3$ are three distinct splits of $X$
such that $S_3(x)\subseteq S_1(x)$, $S_3$ and $S_2$ 
are compatible and $S_1$ and $S_2$ are incompatible.
Then $S_3(x)\subseteq S_2(x)$ or $S_2(x)\subseteq S_3(x)$. 
\end{lemma}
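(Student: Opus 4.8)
$\textbf{Proof plan.}$ The statement is a small combinatorial fact about how three splits interact once we know two containment/compatibility relations. The plan is to work entirely with the four ``blocks'' one gets by intersecting the two parts of $S_2$ with the two parts of $S_3$, and to use the hypotheses to eliminate configurations. Concretely, write $S_2=A_2|\bar A_2$ and $S_3=A_3|\bar A_3$, with the labels chosen so that $x\in A_2=S_2(x)$ and $x\in A_3=S_3(x)$. The four intersections $A_2\cap A_3$, $A_2\cap\bar A_3$, $\bar A_2\cap A_3$, $\bar A_2\cap\bar A_3$ partition $X$, and the conclusion $S_3(x)\subseteq S_2(x)$ or $S_2(x)\subseteq S_3(x)$ is exactly the statement that one of $A_2\cap\bar A_3$ or $\bar A_2\cap A_3$ is empty.

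First I would invoke compatibility of $S_2$ and $S_3$. Two splits are compatible precisely when one of the four intersections of their parts is empty; so at least one of the four blocks above is empty. If the empty block is $A_2\cap\bar A_3$ we get $A_3\supseteq A_2$, i.e. $S_2(x)\subseteq S_3(x)$, and if it is $\bar A_2\cap A_3$ we get $S_3(x)\subseteq S_2(x)$, in either case we are done immediately. So the only danger is that the empty block is one of the ``outer'' two, $A_2\cap A_3$ or $\bar A_2\cap\bar A_3$. But $x$ lies in $A_2\cap A_3$, so that block is nonempty; hence the dangerous case reduces to a single possibility: $\bar A_2\cap\bar A_3=\emptyset$, which says $\bar A_3\subseteq A_2$, equivalently $X=A_2\cup A_3$ with $A_2,A_3$ the two $x$-sides.

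The main obstacle is therefore to rule out this last configuration, and this is where the remaining two hypotheses must be used: $S_3(x)\subseteq S_1(x)$ together with the incompatibility of $S_1$ and $S_2$. I would argue as follows. Incompatibility of $S_1$ and $S_2$ means all four intersections of their parts are nonempty; in particular both $\bar A_1\cap A_2$ and $\bar A_1\cap\bar A_2$ are nonempty, so $\bar A_1$ meets both $A_2$ and $\bar A_2$ (here $A_1:=S_1(x)$). Now in the offending case $\bar A_3\subseteq A_2$, so $\bar A_2\subseteq A_3\subseteq A_1$; this forces $\bar A_1\subseteq A_2$, whence $\bar A_1\cap\bar A_2=\emptyset$, contradicting incompatibility of $S_1$ and $S_2$. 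That contradiction eliminates the last case and completes the proof.

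I expect the only real care needed is bookkeeping with complements and the correct labelling of parts (ensuring $x$ sits in the intended side of each split), since the hypothesis $S_3(x)\subseteq S_1(x)$ is phrased in terms of the $x$-containing parts. The incompatibility of $S_1,S_2$ should be recorded cleanly as ``all four cross-intersections nonempty'', and the containment $\bar A_2\subseteq A_1$ derived from $\bar A_2\subseteq A_3\subseteq A_1$ is the single substantive deduction; everything else is the compatibility trichotomy applied to $S_2,S_3$.
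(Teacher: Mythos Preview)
Your argument is correct and is essentially the same as the paper's: both use compatibility of $S_2,S_3$ together with $x\in S_2(x)\cap S_3(x)$ to reduce to the single case $\overline{S_3(x)}\subseteq S_2(x)$, and then combine $S_3(x)\subseteq S_1(x)$ with this to obtain $\overline{S_1(x)}\cap\overline{S_2(x)}=\emptyset$, contradicting the incompatibility of $S_1$ and $S_2$. The only difference is presentational---you phrase the final containment chain as $\bar A_2\subseteq A_3\subseteq A_1$ whereas the paper writes the equivalent $\overline{S_1(x)}\subseteq\overline{S_3(x)}\subseteq S_2(x)$.
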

\begin{proof}[Proof]
Since $S_2$ and $S_3$ are compatible either
$S_2(x)\subseteq S_3(x)$ or $S_3(x)\subseteq S_2(x)$ 
or  $\overline{S_3(x)}\subseteq S_2(x)$ must hold. If
$\overline{S_3(x)}\subseteq S_2(x)$ held then 
$\emptyset\not =\overline{S_1(x)}\cap \overline{S_2(x)}
\subseteq \overline{S_3(x)}\cap \overline{S_2(x)}=S_2(x)\cap \overline{S_2(x)}
=\emptyset$ follows
which is impossible.
  \end{proof}

For clarity of presentation we remark that for the
proof of Theorem~\ref{theo:equal}, we will assume 
that if a given split $S$ of a 1-nested network $N$
has multiplicity at least two in $\overline{\Sigma(N)}$
then $S$ is displayed by a cycle $C$ of $N$ (rather than by a cut-edge
of $N$). Furthermore, we denote the
split system of $X$ induced by a cycle $C$ of a 1-nested network 
$N$ on $X$ by $\Sigma(C)$. Clearly, $\Sigma(C)\subseteq \Sigma(N)$
holds.

\begin{theorem}\label{theo:equal}
Suppose $\Sigma$ is a split system on $X$ that contains all
trivial splits of $X$. Then
the following hold:
\begin{enumerate}
\item[(i)] There exists a 1-nested network $N$ on $X$ such that
$\Sigma=\Sigma(N)$ if and only if $\Sigma$ is circular and
 $\mathcal{I}$-intersection closed.
\item[(ii)] A maximal partially-resolved 1-nested network $N$ is a 
level-1 network if and only if
there exists no split of $X$ not contained in $\Sigma(N)$ 
that is compatible 
with every split in $\Sigma(N)$.
\end{enumerate}
\end{theorem}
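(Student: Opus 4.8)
The plan is to treat the two parts separately, leaning on the $\mathcal{I}$-intersection closure machinery for (i) and on the resolution operations (R1), (R2) for (ii).

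For part (i), the forward implication is essentially already in hand: if $\Sigma=\Sigma(N)$ for a 1-nested network $N$, then $\Sigma$ is $\mathcal{I}$-intersection closed by Corollary~\ref{cor:cii}(ii), while circularity follows from the known fact that level-1 networks induce circular split systems together with the observation that a 1-nested network arises from a level-1 one by collapsing cut-edges, which only deletes splits and hence preserves the same circular ordering. The substance of (i) lies in the backward implication. Given a circular ordering $x_1,\dots,x_n$ witnessing circularity, I would first construct a candidate network $N$: group the nontrivial splits of $\Sigma$ into their classes under ``displayed by a common cycle''---which, since incompatible splits must cross on the circle, are governed by the connected components of the incompatibility graph---realize each such class as a single polygon whose gates sit at the appropriate positions of the circular ordering, join the polygons along a compatible backbone of cut-edges, and attach a pendant edge carrying $x_i$ at each position so that every trivial split of $\Sigma$ is displayed. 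By construction $\Sigma\subseteq\Sigma(N)$.

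The main obstacle in (i) is the reverse containment $\Sigma(N)\subseteq\Sigma$, i.e.\ that the construction introduces no spurious splits. Every split of $\Sigma(N)$ is either a cut-edge split---these come from the compatible backbone and are forced into $\Sigma$---or is displayed by one of the polygons $C$. For the latter I would show that $\Sigma(C)$ is exactly the circular split system on the gates of $C$ and that all of its arc-splits already belong to $\Sigma$: the ``corner'' splits produced by crossing pairs are supplied by $\mathcal{I}$-intersection closure via Proposition~\ref{prop:intnet}, and a generating argument then recovers every arc-split of $C$ from the splits defining its gates together with iterated $\mathcal{I}$-intersections, with Lemma~\ref{lem:almost-trivial} controlling how a compatible split must nest relative to an incompatible pair. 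Verifying that $\mathcal{I}$-closure exactly accounts for the full arc-split system of each polygon is the technical heart of the argument.

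For part (ii), I work with a maximal partially-resolved 1-nested network $N$, for which (R1) and (R2) can no longer be applied: hence every cycle vertex carries exactly one non-cycle edge and no cut-vertex is shared by two cycles, so each bridgeless component is a single cycle, and the only possible obstruction to $N$ being level-1 is an internal tree-vertex of degree at least four, i.e.\ an unresolved multifurcation. If such a vertex $v$ exists, removing it partitions $X$ into parts $Y_1,\dots,Y_m$ with $m\geq 4$, and a split grouping two of these parts against the rest is compatible with every split of $\Sigma(N)$ (each displayed split respects the multifurcation at $v$) yet is itself not displayed---so a missing compatible split exists. Conversely, given a split $S\notin\Sigma(N)$ compatible with all of $\Sigma(N)$, I would argue that $S$ cannot be displayed inside any cycle (splits sharing a cycle that cross force incompatibilities), so $S$ is ``tree-like'' relative to $N$, its two sides correspond to unions of directions at a single vertex $v$, and compatibility with everything forces $\deg(v)\geq 4$, whence $N$ is not level-1. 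The hard part of (ii) will be this localization step---showing that a split compatible with all of $\Sigma(N)$ must be realizable by subdividing exactly one vertex and in particular cannot interact with the cyclic blocks---which is precisely where the maximal-partially-resolved hypothesis and the degree constraints it imposes are used decisively.
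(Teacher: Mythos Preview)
Your plan is coherent but takes a genuinely different route from the paper on both parts, and in each case the paper's argument is shorter.

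\textbf{Part (i), backward implication.} You propose to \emph{construct} $N$ directly from the connected components of $Incomp(\Sigma^-)$: one polygon per nontrivial component, glued along a compatible tree backbone. This is essentially the strategy the paper develops \emph{later}, in Section~\ref{sec:cir-ord}, culminating in Theorems~\ref{theo:ordun} and~\ref{theo:inclu}. The step you flag as ``the technical heart''---that the $\mathcal I$-closure of a connected component recovers the full arc-split system of its polygon---is precisely Theorem~\ref{theo:ordun}, whose proof is not short (it needs the path argument of Lemma~\ref{lem:central-char-max-circular}). You would also have to justify that the m-splits of the polygons together with the singleton components form a compatible system admitting a tree backbone, which the paper handles via Lemma~\ref{lem:incomp-con-com} and the argument in Theorem~\ref{theo:inclu}. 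None of this is wrong, but it front-loads Section~4. The paper's own proof of Theorem~\ref{theo:equal}(i) avoids all of this: it takes \emph{any} 1-nested $N$ with $\Sigma\subseteq\Sigma(N)$ and $|\Sigma(N)|$ minimal, assumes some $S_0\in\Sigma(N)-\Sigma$ survives, and manufactures a split $S^*$ (from maximal $\Sigma$-splits on either side of $S_0$ in the relevant cycle) that is compatible with every split of $\Sigma$; pinching the cycle along $S^*$ produces a strictly smaller displaying network, a contradiction. This minimality-plus-pinching argument requires no prior knowledge of how the polygons are organised and uses only Lemma~\ref{lem:almost-trivial} and Corollary~\ref{cor:cii}.

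\textbf{Part (ii).} Your contrapositive direction (high-degree tree vertex $\Rightarrow$ missing compatible split) matches the paper exactly. For the other direction you propose a localisation argument: show that a split $S$ compatible with all of $\Sigma(N)$ cannot cross any cycle and must therefore resolve some tree vertex of degree $\geq 4$. This can be made to work, but the paper's argument is a one-liner by comparison: if $N$ is level-1, delete one edge from each cycle and suppress degree-two vertices to obtain a \emph{binary} phylogenetic tree $N'$ with $\Sigma(N')\subseteq\Sigma(N)$; since $\Sigma(N')$ is a maximal compatible split system (it has $2n-3$ splits), any $S$ compatible with all of $\Sigma(N)\supseteq\Sigma(N')$ already lies in $\Sigma(N')\subseteq\Sigma(N)$. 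No localisation is needed.

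In short: your (i) is the constructive Section-4 programme rather than the minimality argument the paper gives here, and your (ii) misses the spanning-binary-tree trick that makes the forward direction immediate.
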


\begin{proof}[Proof]
(i): Assume first that there exists a 1-nested network $N$
on $X$ such that $\Sigma=\Sigma(N)$. Then arguments similar to
the ones used in \cite[Theorem 2]{GBP12} to establish 
that the split system induced 
by a level-1 network is circular imply  that
$\Sigma(N)$ is circular. Hence, $\Sigma$ must be circular. That $\Sigma$ is 
$\mathcal{I}$-intersection closed follows by Corollary~\ref{cor:cii}(ii).

Conversely, assume that $\Sigma$ is circular and 
 $\mathcal{I}$-intersection closed.
Then there clearly exists a 1-nested network $N$ 
such that $\Sigma \subseteq \Sigma(N)$. Let $N$ be such a network 
such that  $|\Sigma(N)|$ is minimal among all 1-nested networks on $X$  
satisfying this 
set inclusion. Without loss of generality, we may assume that 
$N$ is maximal partially-resolved.
We show that, in fact,  $\Sigma= \Sigma(N)$ holds. 
Assume for contradiction that this is not the case, that is, there
exists a split $S_0 \in \Sigma(N) - \Sigma$. Since, by definition
of a phylogenetic network,  
$\Sigma(N)$ contains all trivial splits of $X$ it follows that
$S_0$ cannot be a trivial split of $X$. Also and in view of the
remark preceding the statement of the theorem, $S_0$ is induced 
by either (a) deleting a cut-edge  $e=\{u,v\}$ of $N$ and
neither $u$ nor $v$ are contained in a cycle of $N$
or (b) deleting two distinct
edges of the same cycle of $N$. 

Assume first that Case (a) holds.
Then collapsing $e$ results in a 1-nested network $N'$ on $X$ for which 
$\Sigma \subseteq \Sigma(N')$ holds. But then
 $|\Sigma(N')| < |\Sigma(N)|$ which is impossible in view of 
the choice of $N$.
 Thus, Case (b) must hold, that is, $S_0$ is induced by deleting two distinct
edges $e=\{u,v\}$ and $e'=\{u',v'\}$ of the same cycle $C$ of $N$. 
Let $x$ and $y$ be two elements of $X$ for which there exists a path from
$u$ and $v$, respectively, which does not cross an edge of $C$.
Consider the sets
$\Sigma_x:=\{S \in \Sigma \cap \Sigma(C): S(x) \subseteq S_0(x) \}$, and 
$\Sigma_y:=\{S \in \Sigma \cap \Sigma(C): S(y) \subseteq S_0(y)\}$. 
If $\Sigma_x$ is non empty then choose some 
$S_x\in \Sigma_x$  such that $|S_x(x)|$ is maximal among the splits 
contained in $\Sigma_x$. Similarly, define the split $S_y$ for 
$\Sigma_y$ if $\Sigma_y$ is non-empty. Otherwise let  $S_x$ be the
m-split of $C$ such that $S_x(x)\subseteq  S_0(x)$. Similarly, let 
 $S_y$ be the
m-split of $C$ such that $S_y(y)\subseteq  S_0(y)$ in case $\Sigma_y$
is empty. Then 
Corollary~\ref{cor:cii}(ii) implies that the split 
$$
S^*=S_x(x) \cup S_y(y) |\overline{ S_x(x)} \cap \overline{S_y(y)}
$$ 
is contained in $\Sigma(N)$ (see Figure~\ref{fig:redproc}(i)
for an illustration).
\begin{figure}[h]
\begin{center}
\includegraphics[scale=0.8]{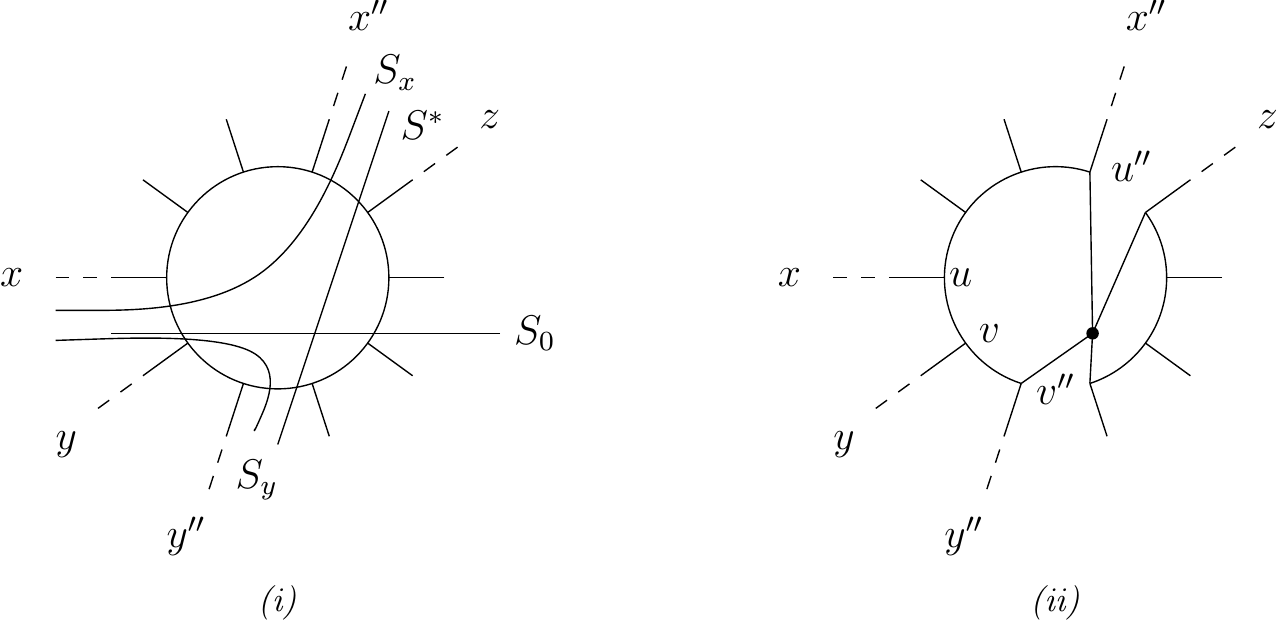}
\caption{\label{fig:redproc} (i) An illustration of the reduction 
process considered
in the proof of Case~(a) of Theorem~\ref{theo:equal}.
(ii) Again for that theorem, 
the graph $G'$ obtained from $N$ by adding subdivision vertices 
$r$ and $r'$. }
\end{center}
\end{figure}

We next show that $S^*$ is compatible with every split in $\Sigma$. 
To this end we first claim that every split 
 $S'\in  \Sigma$ that is incompatible with $S^*$
 must be compatible with at least one of $S_x$ and $S_y$. 
To see this, let $S'\in \Sigma$ such that
$S'$ and $S^*$ are incompatible. Then $S'$ must be displayed by $C$.
For contradiction, assume that $S'$ is incompatible with
both of $S_x$ and $S_y$.  Let $z\in X$ such that
$S^*(x)\not=S^*(z)$ and let $u''\in V(C)$ such
that $S_x(x)$ is the interval $[u,u'']$. Choose some element
$x''\in X$ such that there exists a path from  $x''$ to $u''$
that does not cross an edge contained in $C$. Similarly, 
let $v''\in V(C)$ such
that $S_y(y)$ is the interval $[v'',v]$. Choose some element
$y''\in X$ such that there exists a path from  $y'$ to $v''$
that does not cross an edge contained in $C$. Then  
since $S'$ is incompatible with $S_x$ and $S_y$ and displayed by
$C$ it follows that $S'(x'')= S'(y'')=S'(z)$. Hence,  
$S^*(z)\subseteq S'(z).$ 
But then  $S^* $ and $S'$ are not incompatible which is impossible. 
Thus $S'$ cannot be incompatible with both of 
$S_x$ and $S_y$, as claimed.

To see that $S^*$ is compatible with every split in $\Sigma$, we may,
in view of the above claim, assume
without loss of generality that $S'$ is compatible with  
$S_x$. Then Lemma~\ref{lem:almost-trivial} applied to 
$S'$, $S^*$, and $S_x$ implies
$S_x(x)\subsetneq S'(x)$ or $S'(x)\subsetneq S_x(x)$. 
If  $S'(x)\subsetneq S_x(x)$ held then
$\emptyset\not=S'(x)\cap \overline{S^*(x)}\subseteq 
S_x(x)\cap \overline{S^*(x)}=\emptyset$ follows which is
impossible. Hence, $S_x(x) \subsetneq S'(x)$. 
We distinguish between the cases that ($\alpha$) $S_y$ and $S'$
are compatible and ($\beta$) that they are incompatible.

Case ($\alpha$): Since  $S_y$ and $S'$
are compatible, similar arguments as above imply that 
$S_y(y) \subsetneq S'(y)$. Then the definition of 
$S^*$ combined with the assumption that $S'$ and $S^*$ 
are incompatible implies that $S'(x)\not=S'(y)$. But then
$S'$ and $S_0$ must be compatible, and so,  $S'(x)\subseteq S_0(x)$
or $S_0(x)\subseteq S'(x)$ must hold. If $S'(x)\subseteq S_0(x)$ held 
then $S'\in \Sigma_x$ which is impossible in view
of the choice of $S_x$ as  $S_x(x) \subsetneq S'(x)$.
Thus,   $S_0(x)\subseteq S'(x)$ must hold. But then
$S_y(y)\subsetneq S'(y)\subseteq S_0(y)$ and so
$S'\in \Sigma_y$ which is impossible in view of the
choice of $S_y$. Thus, Case~($\beta$) must hold.

Case ($\beta$): Since  $S_y$ and $S'$ are incompatible
the split
$$
S''=S'(x)\cap \overline{S_y(y)}|\overline{S'(x)}\cup S_y(y)
$$
is contained in $\Sigma$ because $\Sigma$ is
$\mathcal I$-intersection closed and clearly displayed by $C$.
Note that $x\in \overline{S''(y)}$
and so $S''(x)=\overline{S''(y)}$ must hold. Moreover, since 
$S'$ and $S^*$ are incompatible we cannot have $S''(x)=S_x(x)$
as $S_x$ and $S^*$ are compatible.
But then $S_0$ and $S''$ cannot be compatible. Indeed, if $S_0$ and $S''$  
were compatible then since $y\in S_0(y)\cap S''(y)$, 
$x\in \overline{S_0(y)}\cap \overline{S''(y)}$,
 and, because of $S_x(x)\subsetneq S'(x)$, also
$ \overline{S_0(y)}\cap S''(y)=S_0(x)\cap S''(y)=S_0(x)\cap
(\overline{S'(x)}\cup S_y(y))\subseteq S_0(x)\cap 
\overline{S_x(x)}\not=\emptyset$
holds, it follows that
$\overline{S''(y)}\subseteq \overline{S_0(y)}$, as required. Hence, 
$S''(x)= \overline{S''(y)}\subseteq \overline{S_0(y)}=S_0(x)$
and so $S''\in \Sigma_x$ which is impossible in view of the choice of 
$S_x$  as $S_x(x)\not=S''(x)$ and $S_0(x)\not=S''(x)$.
Thus, $S_0$ and $S''$ must be incompatible. But this is also impossible 
since the interval on $C$ corresponding to $S''(x)$ contains the interval
$[x,z]$ which induces the split $S_0$ and so $S_0$ and $S''$ must be
compatible. This final contradiction completes that proof that 
$S^*$ is compatible for every split in $\Sigma$.

To conclude,
let $G$ be a new graph obtained from $N$ by adding a subdivision vertex
 $r$ and $r'$, respectively, to each of two edges whose deletion induces 
the split $S^*$ (see 
Figure~\ref{fig:redproc}(ii) for an illustration). Then, the graph $G'$ 
obtained from $G$ by identifying $r$ and $r'$ is again a 1-nested network
on $X$. By construction, $S_0 \in \hat{\Sigma}:=\{S \in \Sigma(N) : \text{ S is 
incompatible with } S^*\}$ clearly holds and so 
$\Sigma(G')=\Sigma(N)-\hat{\Sigma}
\subsetneq\Sigma(N)$. 
 Since, by the above, 
every split in $\Sigma$ is compatible  with $S^*$ it follows 
that $\Sigma\subseteq \Sigma(G')$. But this
impossible in view of the choice of $N$. Hence, the split $S_0$ cannot
exist and, thus, $\Sigma=\Sigma(N)$.

(ii) Suppose $N$ is a maximal partially-resolved 1-nested network. Assume
first that $N$ is a 
level-1 network on $X$ and, for contradiction, that there
exists some split $S$ of $X$ not contained in $\Sigma(N)$ that is compatible
with every split in $\Sigma(N)$. Then $S'$ cannot be a trivial split of
$X$. Let $N'$ be the graph obtained from $N$ by deleting
from each cycle of $N$ one of its edges and suppressing resulting degree
two vertices. Clearly $N'$ is a phylogenetic tree on $X$ and since every
non-leaf vertex of $N$ had degree three every such vertex in $N'$ must 
also have degree three. Hence, $\Sigma(N')$ is a maximal compatible
split system on $X$. Since $S$ is compatible with every split of $\Sigma(N)$
and $\Sigma(N')\subseteq  \Sigma(N)$ it follows that 
$\Sigma(N')\cup \{S\}$ is also compatible which is impossible in view of the
maximality of $\Sigma(N')$.

Conversely, assume that there exists no split of $X$ not
contained in $\Sigma(N)$ that is compatible with every split in $\Sigma(N)$.
Then if $N$ is not level-1 it  contains a vertex $v$ of degree $k \geq 4$, that 
does not belong to a cycle of $N$.
Let $X_1,\ldots,X_k$ be the partition of $X$ obtained by deletion of $v$
(suppressing incident edges). 
Then there exist $i,j\in \{1,\ldots, k\}$ distinct, say $i=1$ and $j=2$,
such that the split $S:=X_1 \cup X_2| \bigcup_{i=3}^{k} X_i$ is 
compatible with every split in $N$. Since $S$ does not belong to 
$\Sigma(N)$ this is impossible.
\end{proof}

Note that Theorem~\ref{theo:equal} allows one to decide 
if for a split system $\Sigma$ of $ X$ there exists a 1-nested network $N$ 
on $X$ such that $\Sigma=\Sigma(N)$. However it does not
provide a tool for how to construct such a network. The provision of
such a tool is the purpose of the next two sections. 

\section{Optimality and the analogue of the Split Equivalence 
Theorem}\label{sec:cir-ord}

In this section, we turn our attention towards constructing 
1-nested networks from circular split systems. In particular, we
show that for any circular split system $\Sigma$ on $X$ it is possible to 
construct a, in a well-defined sense, optimal
1-nested network on $X$ in $\mathcal O(|X|^2+|\Sigma|^2)$ time
(Theorem~\ref{theo:inclu}). Central to our proof is Theorem~\ref{theo:ordun}
in which we characterize circular split systems whose $\mathcal I$-intersection
closure is (set-inclusion) maximal in terms of their so called incompatibility
graphs. As a consequence, 
we obtain the analog of the ``Splits-Equivalence
Theorem'' (see Section~\ref{sec:intro}) for 1-nested networks 
(Corollary~\ref{cor:equivalence}).
For phylogenetic trees this theorem is fundamental and characterizes
split systems $\Sigma$ for which there exists a, up to
isomorphism, unique phylogenetic tree
$T$ for which $\Sigma=\Sigma(T)$ holds.

We start with introducing
some more terminology. Suppose $\Sigma$ is a circular split 
system on $X$. Then 
we say that $\Sigma$ is \emph{maximal circular} if for all 
split system $\Sigma'$ 
that contain $\Sigma$, we have $\Sigma=\Sigma'$.
As the next result illustrates, maximal circular split systems of $X$ and
1-nested networks on $X$ are closely related.

\begin{lemma}\label{lem:maxcirc}
A split system $\Sigma$ on $X$ is maximal circular if and only if 
there exists a simple
level-1 network $N$ on $X$  such that $\Sigma = \Sigma(N)$.
\end{lemma}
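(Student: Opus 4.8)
The plan is to prove both implications by showing that each side of the equivalence describes the very same object: the split system consisting of \emph{all} interval splits of a fixed circular ordering of $X$, which attains the maximum possible size $n(n-1)/2$ for a circular split system. For the ``if'' direction, suppose $N$ is a simple level-1 network on $X$ and first pin down its structure. Since $N$ is level-1 every non-leaf vertex has degree three and every bridgeless component has cyclomatic number at most one, so each such component is either a single edge or a single cycle; simplicity forces every cut-edge to be trivial, which rules out internal bridges. Hence for $n\geq 4$ the network $N$ contains exactly one cycle $C$, no non-leaf vertex lies off $C$, and each vertex of $C$ carries exactly one pendant (necessarily trivial) leaf edge as its third edge. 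Consequently $C$ has length $n$, and the set-inclusion minimal edge cuts of $N$ are precisely the single leaf edges (giving the trivial splits) together with the pairs of cycle edges (giving the non-trivial interval splits). Thus $\Sigma(N)$ is exactly the set of all interval splits of the ordering read off $C$, so $|\Sigma(N)|=n(n-1)/2$. The case $n=3$ is degenerate, since no cycle of length at least four fits: there $N$ is the star and $\Sigma(N)$ is the set of the three trivial splits, which again exhausts the interval splits.

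To finish this direction I would invoke circularity of $\Sigma(N)$ (Theorem~\ref{theo:equal}(i)) together with the size bound stated in Section~\ref{sec:prelim}: any circular split system $\Sigma'$ with $\Sigma(N)\subseteq\Sigma'$ satisfies $|\Sigma'|\leq n(n-1)/2 = |\Sigma(N)|$, forcing $\Sigma'=\Sigma(N)$. Hence $\Sigma(N)$ is maximal circular.

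For the ``only if'' direction, suppose $\Sigma$ is maximal circular and fix a witnessing circular ordering $x_1,\dots,x_n$. The key observation is that appending any interval split of this ordering to $\Sigma$ yields a split system that is still circular for the \emph{same} ordering; maximality therefore forces every interval split of $x_1,\dots,x_n$ to already lie in $\Sigma$. Since $\Sigma$ is itself circular for this ordering, every one of its splits is an interval split, and so $\Sigma$ equals the full interval split system of $x_1,\dots,x_n$. I would then realise $\Sigma$ as $\Sigma(N)$ by letting $N$ be the cycle $v_1\cdots v_n$ with a pendant leaf $x_i$ attached at $v_i$ (and the star tree when $n=3$); this $N$ is a simple level-1 network, and by the edge-cut computation of the first direction $\Sigma(N)$ is exactly the set of interval splits, i.e. $\Sigma$.

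The main obstacle is the structural analysis of simple level-1 networks in the first direction: establishing that simplicity together with the level-1 degree and cyclomatic conditions really do collapse $N$ to a single cycle with one pendant leaf at every vertex, with no stray internal vertices surviving. Once this normal form is in hand, both directions reduce to the clean bookkeeping that the displayed splits are precisely the interval splits and that $n(n-1)/2$ is their common maximal count. A minor point to treat explicitly is the $n=3$ boundary case, where the cycle degenerates and the realising network is the star.
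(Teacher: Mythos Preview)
Your proposal is correct and follows essentially the same route as the paper, though you spell out considerably more detail. The paper's proof is terse: for the forward direction it simply asserts that a simple level-1 network displaying $\Sigma$ exists and invokes maximality; for the converse it observes that any split in a hypothetical larger circular system would have interval parts on the ordering induced by $N$ and hence already be a minimal cut of $N$. Your explicit structural reduction of a simple level-1 network to a single $n$-cycle with one pendant leaf per vertex, together with the counting $|\Sigma(N)|=\binom{n}{2}$, makes the same argument watertight where the paper leaves it to the reader. The only cosmetic point is that your appeal to Theorem~\ref{theo:equal}(i) for circularity of $\Sigma(N)$ is heavier than needed---the paper just treats this as evident from the cycle structure---but it is not circular since that theorem precedes this lemma.
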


\begin{proof}
Let $\Sigma$ be a split system on $X$. Assume first that
$\Sigma$ is maximal circular. Then,
exists a simple level-1 network $N$ on $X$ such that 
$\Sigma \subseteq \Sigma(N)$. Since $\Sigma(N)$ is clearly 
a circular split system on $X$ the maximality of $\Sigma$ implies
$\Sigma = \Sigma(N)$.

Conversely, assume that $N$ is a simple level-1 network such
that $\Sigma = \Sigma(N)$. 
Then since $\Sigma(N)$ is a circular split system on $X$ so is $\Sigma$.
Assume for contradiction that $\Sigma$ is not maximal circular,
that is, there exists a split $S=A|\bar A\in \Sigma$
that is not contained in $\Sigma(N)$. Then $A$
and $\bar A$ are both intervals on the circular ordering 
of $X$ induced by $\Sigma(N)$. Hence, 
$S$ is induced by a minimal cut
of $N$ and, so, $S\in\Sigma(N)$ which is impossible.
\end{proof}

Note that since a maximal circular split system on $X$ must necessarily contain
all 2-splits of $X$ obtainable as a minimal 
cuts in the associated simple level-1 network on $X$,
it follows that that ordering of $X$ is unique. The next result suggests that 
systems of such splits suffice to generate a maximal circular 
split system. To state it,
suppose $x_1, ..., x_{n-1}, x_n, x_{n+1}=x_1$
is a circular ordering of $X$ and put
$\Sigma_d:=\{\{x_i,x_{i+1}\}| X-\{x_i,x_{i+1}\}\, :\, 1\leq i\leq n\}$.
Clearly, $\Sigma_d$ is a circular split system on $X$.

In view of Lemma~\ref{lem:maxcirc}, we say that a circular ordering
{\em displays} a split system $\Sigma$ if $\Sigma$ is displayed
by the simple level-1 network associated to $\Sigma$.

\begin{lemma}\label{lem:2sp}
Suppose $\sigma: x_1, ..., x_{n-1}, x_n, x_{n+1}=x_1$ is 
a circular ordering
of $X$. Then  
$\mathcal I(\Sigma_d)$ is a maximal circular split system on $X$.
\end{lemma}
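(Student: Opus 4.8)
The plan is to realize $\mathcal{I}(\Sigma_d)$ as the split system of a single simple level-1 network and then invoke Lemma~\ref{lem:maxcirc}. Let $N$ be the simple level-1 network associated with $\sigma$, that is, the network whose unique cycle $C$ has vertices $v_1,\dots,v_n$ arranged in the cyclic order prescribed by $\sigma$ with a pendant edge carrying the leaf $x_i$ attached to $v_i$. Write $g_1,\dots,g_n$ for the edges of $C$ in cyclic order, chosen so that deleting $\{g_a,g_{a+s}\}$ (indices mod $n$) yields the interval split whose smaller side consists of the $s$ leaves $x_{a+1},\dots,x_{a+s}$ for $s\le n/2$. Then $\Sigma(N)$ is exactly the set of interval splits of $\sigma$, and by Lemma~\ref{lem:maxcirc} it is maximal circular. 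It therefore suffices to prove $\mathcal{I}(\Sigma_d)=\Sigma(N)$.

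One inclusion is immediate. Each split in $\Sigma_d$ is an interval split of $\sigma$ and so lies in $\Sigma(N)$, while $\Sigma(N)$ is $\mathcal{I}$-intersection closed by Corollary~\ref{cor:cii}(ii). Since $\mathcal{I}(\Sigma_d)$ is the set-inclusion minimal $\mathcal{I}$-intersection closed superset of $\Sigma_d$ (Lemma~\ref{lem:closure} and Property (C3)), we obtain $\mathcal{I}(\Sigma_d)\subseteq\Sigma(N)$.

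The substance of the argument is the reverse inclusion $\Sigma(N)\subseteq\mathcal{I}(\Sigma_d)$, that is, showing that every interval split of $\sigma$ is generated from the size-two splits of $\Sigma_d$ by repeatedly forming $\mathcal{I}$-intersections. I would argue by induction on the size $s\in\{1,\dots,\lfloor n/2\rfloor\}$ of the target split, the engine being Proposition~\ref{prop:intnet}: if two interval splits delete crossing edge pairs of $C$ occurring clockwise as $e_1,e_2,e_1',e_2'$, then $\iota$ of the two returns precisely the four splits cutting the consecutive pairs $\{e_1,e_2\},\{e_2,e_1'\},\{e_1',e_2'\},\{e_2',e_1\}$. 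Crossing two overlapping size-two splits $\{x_i,x_{i+1}\}|\cdots$ and $\{x_{i+1},x_{i+2}\}|\cdots$ yields, among the four outputs, the three trivial splits on $x_i,x_{i+1},x_{i+2}$ and the size-three interval $\{x_i,x_{i+1},x_{i+2}\}|\cdots$; letting $i$ range over all indices produces every split of size $1$ and size $3$. For the inductive step, given every interval split of size $s\ge 2$, I would cross the size-$s$ split cutting $\{g_a,g_{a+s}\}$ with the size-two split cutting $\{g_{a+s-1},g_{a+s+1}\}\in\Sigma_d$; these pairs cross, and among the four splits of their $\mathcal{I}$-intersection appears the size-$(s+1)$ interval cutting $\{g_a,g_{a+s+1}\}$. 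Letting $a$ range over all indices yields every interval split of size $s+1$, completing the induction and hence the inclusion.

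The main obstacle, and the point requiring genuine care, is the bookkeeping verifying at each stage that the two chosen splits are incompatible (equivalently, that their cut-edge pairs cross) and that the desired larger interval is indeed one of the four splits returned by $\iota$; both become transparent once everything is phrased in terms of pairs of cut-edges on $C$ and fed into Proposition~\ref{prop:intnet}. One must also confirm that the four cut-edges involved remain distinct, which holds because the induction only uses the step for $s+1\le\lfloor n/2\rfloor\le n-1$, and record the degenerate case $n=3$, where $\Sigma_d$ already coincides with the set of all trivial splits so that the claim is immediate. Combining the two inclusions gives $\mathcal{I}(\Sigma_d)=\Sigma(N)$, and Lemma~\ref{lem:maxcirc} then shows that $\mathcal{I}(\Sigma_d)$ is maximal circular.
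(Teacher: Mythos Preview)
Your proposal is correct and follows essentially the same approach as the paper: both argue by induction on the size of an interval split, using that an interval of size $s$ and an overlapping size-two split from $\Sigma_d$ are incompatible and that their $\mathcal I$-intersection contains the interval of size $s+1$. Your framing via the simple level-1 network $N$ and the explicit use of Corollary~\ref{cor:cii}(ii) for the inclusion $\mathcal I(\Sigma_d)\subseteq\Sigma(N)$ is slightly more detailed than the paper, which leaves that direction implicit, but the substantive inductive step is the same.
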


\begin{proof}[Proof]
Since the result is trivial for $n=3$, we may assume without 
loss of generality that $n \geq 4$.
We proceed by induction on the size $1 \leq l \leq \frac{n}{2}$ 
of a split $S$ displayed
by $\sigma$. Suppose first that $l=1$. Then there
exists some $i\in \{1,\ldots, n\}$ such that $S=x_i|X-\{x_i\}$. Clearly,
$S_1=\{x_i,x_{i-1}\}|X-\{x_i,x_{i-1}\}$
and $S_2=\{x_i,x_{i+1}\}|X-\{x_i,x_{i+1}\}$ 
are contained in $\Sigma_d$ and incompatible. Hence,
$S=S_1(x_i)\cap S_2(x_i)|X-(S_1(x_i)\cap S_2(x_i))
\in \iota(S_1,S_2)\subseteq \mathcal I(\Sigma_d)$. 

Now assume that $l\geq 2$ and
that all splits of $X$ displayed by  $\sigma$ of size at most $l-1$ 
are contained
in $\mathcal I(\Sigma_d)$. Since $S$ is displayed by $\sigma$ there 
exists some $i\in \{1,\ldots, n\}$ such that
$S=[x_i, x_{i+l-1}]|X-[x_i, x_{i+l-1}]$.
Without loss of generality we may assume that $i=1$.
Then $S=[x_1,x_l]|X-[x_1,x_l]$.
Consider the splits $S_1=[x_1, x_{l-1}]|X-[x_1, x_{l-1}]$
and $S_2=\{x_{l-1},x_l\}|X-\{x_{l-1}, x_l\}$ displayed by $\sigma$. 
By induction, $S_1,S_2\in \mathcal I(\Sigma_d)$ since
the size of $S_2$ is two  and that of $S_1$ is at most $l-1$.
Furthermore, $S_1$ and $S_2$ are incompatible. 
Since
$S=S_1(x_{l-1})\cup S_2(x_{l-1})|X-(S_1(x_{l-1})\cup S_2(x_{l-1})
\in \iota(S_1,S_2)\subseteq \mathcal I(\Sigma_d)$, the lemma follows.
\end{proof}

We next employ Lemma~\ref{lem:2sp} to obtain
a sufficient condition on a circular split system $\Sigma$
for $\mathcal I(\Sigma)$ to be 
maximal circular. Central
to this is the concept of the {\em incompatibility graph} $Incomp(\Sigma)$
of a split system $\Sigma$. For $\Sigma$
a split system on $X$ the vertex set of that graph is 
$\Sigma$  and any two distinct splits of $\Sigma$
are joined by an edge in $Incomp(\Sigma)$ if they are incompatible.
We denote the
set of connected components of $Incomp(\Sigma)$ by $\pi_0(\Sigma)$
and, by abuse of terminology, refer to the vertex set of an element in 
$\pi_0(\Sigma)$ as a {\em connected component of $Incomp(\Sigma)$}. 
 For example, $Incomp(\Sigma_d)$ is a cycle of length $|\Sigma_d|$
whenever $n\geq 5$.
Furthermore,  $\Sigma$ is compatible if and only if $|\Sigma_0|=1$
holds for all $\Sigma_0\in \pi_0(\Sigma)$.
 
We next clarify the relationship between the incompatibility graph and
$\mathcal{I}$-intersection closure of a split system.

\begin{lemma}\label{lem:incomp-con-com}
Suppose $\Sigma$ is a split system on $X$.  
Then there cannot exist two distinct connected components
$\Sigma_1, \Sigma_2\in \pi_0(\Sigma)$
and splits $S_1\in \mathcal{I}(\Sigma_1)$
and $S_2\in \mathcal{I}(\Sigma_2)$ such that $S_1$ and $S_2$ 
are incompatible.
\end{lemma}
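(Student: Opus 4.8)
The plan is to reduce the statement to a single structural fact about one $\mathcal{I}$-intersection and then run a well-founded induction on the way $S_1$ and $S_2$ are built inside their respective closures. Recall from Lemma~\ref{lem:closure} and the remark following it that every split of $\mathcal{I}(\Sigma_i)$ arises from $\Sigma_i$ by a finite sequence of $\mathcal{I}$-intersections. I would assign to each such split a \emph{rank}, the least number of $\mathcal{I}$-intersection steps needed to produce it, so that the splits of $\Sigma_i$ have rank $0$ and any split of positive rank lies in $\iota(A,B)$ for two splits $A,B\in\mathcal{I}(\Sigma_i)$ of strictly smaller rank (take the last step of a shortest construction).

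The first step is to prove the following crux fact: \emph{if $A$ and $B$ are incompatible splits of $X$ and $T$ is compatible with both $A$ and $B$, then $T$ is compatible with every split in $\iota(A,B)$}; equivalently, in contrapositive form, any split incompatible with a member of $\iota(A,B)$ is already incompatible with $A$ or with $B$. I would prove this by direct inspection. Writing $A=A_1|A_2$ and $B=B_1|B_2$, and using that incompatibility of $A$ and $B$ forces all four corners $A_i\cap B_j$ to be non-empty, it suffices by symmetry to treat the member $C=A_1\cap B_1\,|\,A_2\cup B_2$ of $\iota(A,B)$. Compatibility of $T=T_1|T_2$ with $A$ and with $B$ each makes one of the four ``quadrant'' intersections $T_k\cap A_l$, respectively $T_k\cap B_l$, empty, and a short case check shows that in every combination one of the four intersections of $T$ with $C$ is empty too, so $T$ and $C$ are compatible. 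The only combinations not settling at once are those forcing $A_2\subseteq T_2,\ B_2\subseteq T_1$ (or the mirror image), and these are impossible precisely because $A_2\cap B_2\neq\emptyset$ would then lie in $T_1\cap T_2=\emptyset$.

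With the crux in hand I would argue the lemma by a minimal counterexample. Suppose there are distinct components $\Sigma_1,\Sigma_2\in\pi_0(\Sigma)$ and incompatible splits $S_1\in\mathcal{I}(\Sigma_1)$, $S_2\in\mathcal{I}(\Sigma_2)$, and choose such a pair minimizing the sum of the ranks of $S_1$ and $S_2$. If both ranks are $0$ then $S_1\in\Sigma_1$ and $S_2\in\Sigma_2$ are incompatible vertices of $Incomp(\Sigma)$ lying in different connected components, contradicting the definition of $\pi_0(\Sigma)$. Otherwise one of them, say $S_1$, has positive rank, so $S_1\in\iota(A,B)$ with $A,B\in\mathcal{I}(\Sigma_1)$ of strictly smaller rank. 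The contrapositive of the crux applied to $A$, $B$ and $T=S_2$ shows $S_2$ is incompatible with $A$ or with $B$; the resulting pair, say $(A,S_2)$, is again a cross-component incompatible pair, but of strictly smaller total rank, contradicting minimality.

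I expect the main obstacle to be the crux fact, and within it the verification that a split compatible with both $A$ and $B$ cannot cross a ``corner'' split $A_i\cap B_j\mid\overline{A_i\cap B_j}$. The essential ingredient is not the compatibility bookkeeping but the non-emptiness of all four corners $A_i\cap B_j$, which is exactly the hypothesis that $A$ and $B$ are incompatible; once this is exploited the two genuinely ambiguous cases collapse. The outer induction is then routine, its base case being immediate from the definition of a connected component of $Incomp(\Sigma)$.
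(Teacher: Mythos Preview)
Your proof is correct and follows essentially the same strategy as the paper: both reduce to the fact that a split incompatible with a member of $\iota(A,B)$ must already be incompatible with $A$ or $B$, and then push the incompatibility down through the construction of the closure until it lands in $\Sigma_1\times\Sigma_2$, contradicting that $\Sigma_1$ and $\Sigma_2$ are distinct components. Your version is somewhat more explicit than the paper's---you actually prove the crux fact (the paper simply asserts it) and your symmetric ``sum of ranks'' minimal-counterexample cleanly handles both sides at once, whereas the paper fixes a chain $\Sigma_1=\Sigma^0\subsetneq\cdots\subsetneq\Sigma^k=\mathcal{I}(\Sigma_1)$ and takes the least $i^*$ with $\Sigma^{i^*}$ containing a split incompatible with $S_2$.
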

\begin{proof}
Assume for contradiction that 
there exist two connected components $\Sigma_1,\Sigma_2\in \pi_0(\Sigma)$
and splits $S_1\in  \mathcal{I}(\Sigma_1)$
and $S_2\in  \mathcal{I}(\Sigma_2)$ such that $S_1$ and
$S_2$ are incompatible.
Then
$S_1 \in \Sigma_1$ and $S_2 \in \Sigma_2$ cannot both hold as otherwise 
$\Sigma_1=\Sigma_2$. 
Assume without loss of generality that $S_1 \notin \Sigma_1$. 
Let $\Sigma^0:=\Sigma_1  \subsetneq \Sigma^1 \subsetneq \ldots \subsetneq 
\Sigma^k:= \mathcal{I}(\Sigma_1)$, $k\geq 1$ be a finite 
sequence such that, for all $1\leq i\leq k$, a split in 
$\Sigma^i$ either belongs to $\Sigma^{i-1}$ or is an 
$\mathcal{I}$-intersection between two splits $S,S'\in\Sigma^{i-1}$
and $\iota(S,S')\not\subseteq \Sigma^{i-1}$. 
Then, there exists some $i^*>0$ such that 
$S_1 \in \Sigma^{i^*} - \Sigma^{i^*-1}$. After possibly renaming $S_1$, we
may assume without loss of generality, that
$i^*$ is such that for all $1\leq i\leq i^*-1$ there
exists no split in $\Sigma^{i}$ that is incompatible with 
$S_2$. Hence, there must exist two splits 
$S$ and $S'$ in $\Sigma^{i^*-1}$ distinct such that $S_1 \in \iota(S,S')$. 
Since $S_2$ and $S_1$ are incompatible, it follows that 
$S_2$ is incompatible 
with one of  $S$ and $S'$, which is impossible by the choice of $i^*$.
\end{proof}

Armed with this result, we next relate for a split system
$\Sigma$ the sets $\pi_0(\mathcal{I}(\Sigma))$ and $\pi_0(\Sigma)$.

\begin{lemma}\label{lem:intinc}
Suppose $\Sigma$ is a split system on $X$. Then the following hold
\begin{enumerate}
\item[(i)] $\mathcal{I}(\Sigma)=\bigcup_{\Sigma_0 \in \pi_0(\Sigma)} 
\mathcal{I}(\Sigma_0)$.
\item[(ii)] 
$\pi_0(\mathcal{I}(\Sigma_0)) \subseteq \pi_0(\mathcal{I}(\Sigma))$,  
for all $\Sigma_0\in\pi_0(\Sigma)$. 
In particular, $\pi_0(\mathcal{I}(\Sigma))=\bigcup_{\Sigma_0 \in \pi_0(\Sigma)} 
\pi_0(\mathcal{I}(\Sigma_0))$.
\end{enumerate}
\end{lemma}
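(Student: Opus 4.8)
The plan is to prove (i) first and then to read off (ii) from it together with Lemma~\ref{lem:incomp-con-com}, which already guarantees that no incompatibility can occur between the $\mathcal{I}$-intersection closures of two distinct connected components of $Incomp(\Sigma)$. Throughout I use that $\mathcal{I}(\Sigma)$ is, by definition, the (set-inclusion) minimal $\mathcal{I}$-intersection closed superset of $\Sigma$, together with monotonicity of the closure (Property (C3)).

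For (i), the inclusion $\bigcup_{\Sigma_0 \in \pi_0(\Sigma)} \mathcal{I}(\Sigma_0) \subseteq \mathcal{I}(\Sigma)$ is immediate, since each $\Sigma_0 \subseteq \Sigma$ gives $\mathcal{I}(\Sigma_0) \subseteq \mathcal{I}(\Sigma)$ by (C3). For the reverse inclusion I would set $U := \bigcup_{\Sigma_0 \in \pi_0(\Sigma)} \mathcal{I}(\Sigma_0)$ and show that $U$ is an $\mathcal{I}$-intersection closed split system containing $\Sigma$; minimality of $\mathcal{I}(\Sigma)$ then forces $\mathcal{I}(\Sigma) \subseteq U$. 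That $\Sigma \subseteq U$ holds is clear, as every split of $\Sigma$ lies in some component $\Sigma_0$ and hence in $\mathcal{I}(\Sigma_0)$. To see that $U$ is $\mathcal{I}$-intersection closed, take incompatible splits $S_1, S_2 \in U$ with $S_i \in \mathcal{I}(\Sigma_i)$, $i = 1, 2$. Here Lemma~\ref{lem:incomp-con-com} does the decisive work: since $S_1$ and $S_2$ are incompatible, the two components cannot be distinct, so $\Sigma_1 = \Sigma_2 =: \Sigma_0$, and then $\iota(S_1, S_2) \subseteq \mathcal{I}(\Sigma_0) \subseteq U$ because $\mathcal{I}(\Sigma_0)$ is itself $\mathcal{I}$-intersection closed.

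For (ii), the key observation is that, applying Lemma~\ref{lem:incomp-con-com} to $\mathcal{I}(\Sigma) = \bigcup_{\Sigma_0} \mathcal{I}(\Sigma_0)$, the graph $Incomp(\mathcal{I}(\Sigma))$ carries no edge joining a split of $\mathcal{I}(\Sigma_1)$ to a split of $\mathcal{I}(\Sigma_2)$ whenever $\Sigma_1 \neq \Sigma_2$; and since incompatibility is a pairwise property, the subgraph of $Incomp(\mathcal{I}(\Sigma))$ induced on $\mathcal{I}(\Sigma_0)$ is exactly $Incomp(\mathcal{I}(\Sigma_0))$. I would then fix a component $\Sigma_0 \in \pi_0(\Sigma)$ and a connected component $\Gamma$ of $Incomp(\mathcal{I}(\Sigma_0))$ and show $\Gamma \in \pi_0(\mathcal{I}(\Sigma))$. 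Connectivity of $\Gamma$ inside $Incomp(\mathcal{I}(\Sigma))$ follows from the induced-subgraph remark; maximality is where the no-cross-edges fact is used, since if some $T \in \mathcal{I}(\Sigma)$ outside $\Gamma$ were incompatible with an $S \in \Gamma$, then writing $T \in \mathcal{I}(\Sigma_0')$ and invoking Lemma~\ref{lem:incomp-con-com} forces $\Sigma_0' = \Sigma_0$, whence $T$ lies in the same component $\Gamma$ of $Incomp(\mathcal{I}(\Sigma_0))$, a contradiction. This yields $\pi_0(\mathcal{I}(\Sigma_0)) \subseteq \pi_0(\mathcal{I}(\Sigma))$. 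The ``in particular'' identity then follows by picking, for an arbitrary component $\Delta$ of $Incomp(\mathcal{I}(\Sigma))$, any $S \in \Delta$, a component $\Sigma_0$ with $S \in \mathcal{I}(\Sigma_0)$, and the component $\Gamma$ of $Incomp(\mathcal{I}(\Sigma_0))$ containing $S$; since $\Gamma \in \pi_0(\mathcal{I}(\Sigma))$ also contains $S$, uniqueness of the component of $Incomp(\mathcal{I}(\Sigma))$ through $S$ gives $\Delta = \Gamma$.

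The step I expect to require the most care is the possibility that the closures $\mathcal{I}(\Sigma_0)$ are not pairwise disjoint, so that a single split may belong to several of them. The argument above is deliberately phrased so as to sidestep this by reasoning only about the edges of the incompatibility graph, which Lemma~\ref{lem:incomp-con-com} controls completely, rather than about any canonical assignment of splits to components; this is the crux of both parts and is what makes the overlap harmless.
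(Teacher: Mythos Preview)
Your proposal is correct and follows essentially the same route as the paper's proof: for (i) you both show that $U=\bigcup_{\Sigma_0}\mathcal I(\Sigma_0)$ is $\mathcal I$-intersection closed via Lemma~\ref{lem:incomp-con-com} and then invoke minimality of $\mathcal I(\Sigma)$, and for (ii) you both verify connectivity and maximality of a component of $Incomp(\mathcal I(\Sigma_0))$ inside $Incomp(\mathcal I(\Sigma))$ by splitting into the two cases $S_2\in\mathcal I(\Sigma_0)-\Gamma$ and $S_2\in\mathcal I(\Sigma)-\mathcal I(\Sigma_0)$. Your explicit treatment of the ``in particular'' clause and of possible overlaps among the $\mathcal I(\Sigma_0)$ is a small but welcome addition beyond what the paper writes out.
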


\begin{proof}
(i) Let $\Sigma_0\in \pi_0(\Sigma)$ and  
put $\mathcal A:=\bigcup_{\Sigma' \in \pi_0(\Sigma)} \mathcal{I}(\Sigma')$.
Note that since $\Sigma=\bigcup_{\Sigma' \in \pi_0(\Sigma)} \Sigma'$, 
we trivially have $\Sigma \subseteq \mathcal A \subseteq \mathcal{I}(\Sigma)$.  
To see that $\mathcal{I}(\Sigma)\subseteq \mathcal A$ note that
Lemma~\ref{lem:incomp-con-com} implies that any two
incompatible splits in $\mathcal A$ must be
contained in the same connected component so must be their 
$\mathcal I$- intersection. Thus, 
$\mathcal A$ is $\mathcal{I}$-intersection closed. 
Since $\Sigma\subseteq \mathcal  A$ we also have
 $\mathcal{I}(\Sigma) \subseteq \mathcal{I}( \mathcal  A)
= \mathcal A$ and, so,  
$\mathcal A=\mathcal{I} (\Sigma)$ follows.

(ii) Suppose $\Sigma_0 \in \pi_0(\Sigma)$ and 
let $\overline{\Sigma}_0\in\pi_0(\mathcal{I}(\Sigma_0))$. 
To establish that 
$\overline{\Sigma}_0\in\pi_0(\mathcal{I}(\Sigma))$
note that since $\overline{\Sigma}_0$ is connected in $\mathcal{I}(\Sigma_0)$
it also is connected in  $\mathcal{I}(\Sigma)$. 
Hence, it suffices to show that every split
in $\overline{\Sigma}_0$ is compatible with every split in 
$\mathcal{I}(\Sigma)-\overline{\Sigma}_0$. Suppose 
$S_1\in \overline{\Sigma}_0$ and $S_2\in 
\mathcal{I}(\Sigma)-\overline{\Sigma}_0=
(\mathcal{I}(\Sigma)-\mathcal{I}(\Sigma_0)) \cup (\mathcal{I}(\Sigma_0)-
\overline{\Sigma}_0)$. If $S_2\in \mathcal{I}(\Sigma_0)-
\overline{\Sigma}_0$ then, by definition, $S_1$ and $S_2$ are compatible.
So assume that $S_2\in \mathcal{I}(\Sigma)-\mathcal{I}(\Sigma_0)$.
Then Lemma~\ref{lem:intinc}(i) implies that $S_2$ is compatible
with every split in $\mathcal{I}(\Sigma_0)$ and thus 
with $S_1$ as $\overline{\Sigma}_0\subseteq \mathcal{I}(\Sigma_0)$.
\end{proof}

To establish the next result which is central to Theorem~\ref{theo:inclu}, 
we require a further notation.
Suppose $\Sigma$ is a split system on $X$. Then we denote by
 $\Sigma^-$ the split system obtained from $\Sigma$
by deleting all trivial splits on $X$.

\begin{theorem}\label{ordun}\label{theo:ordun}
Let $\Sigma$ be a circular split system on $X$. Then $\mathcal I(\Sigma)$ is a 
maximal circular split system on $X$ if and only if the  
following two conditions hold:\\
(i) for all $x, y \in X$ distinct, there exists some $S \in \Sigma^-$ 
such that $S(x) \neq S(y)$,\\
(ii) $Incomp(\Sigma^-)$ is connected.\\
Moreover, if (i) and (ii) hold then
there exists an unique, up to isomorphism and partial-resolution,
simple 1-nested network $N$ on $X$ 
such that $\Sigma \subseteq \Sigma(N)$.
\end{theorem}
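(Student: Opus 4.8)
The plan is to identify maximal circularity of $\mathcal{I}(\Sigma)$ with $N$ being the \emph{full} single-cycle simple level-1 network for the (unique) circular ordering $\sigma$ underlying $\Sigma$. Note that $\mathcal{I}(\Sigma)$ is circular for $\sigma$ by Corollary~\ref{cor:cii}(i), and by Lemmas~\ref{lem:maxcirc} and~\ref{lem:2sp} it is maximal circular precisely when it contains the adjacent $2$-splits $\Sigma_d$, i.e.\ when $\mathcal{I}(\Sigma)=\mathcal{I}(\Sigma_d)$. Before either direction I would record two elementary facts. First, since a trivial split is compatible with every split, no trivial split ever participates in an $\mathcal{I}$-intersection, so the non-trivial splits of $\mathcal{I}(\Sigma)$ are exactly those of $\mathcal{I}(\Sigma^-)$; in particular the closure of $\Sigma^-$ is driven entirely by $\Sigma^-$. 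Second, each operation $\iota(S_1,S_2)$ preserves the ``same-side'' relation on pairs: if $x,y$ lie on a common side of both $S_1$ and $S_2$ they lie on a common side of every split of $\iota(S_1,S_2)$, so if $x,y$ are never separated by $\Sigma^-$ they are never separated by any non-trivial split of $\mathcal{I}(\Sigma)$.

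For necessity, assume $\mathcal{I}(\Sigma)$ is maximal circular, so (for $n\ge 4$, treating $n=3$ separately) $\mathcal{I}(\Sigma)^-$ is the full system of non-trivial interval splits of $\sigma$. This system separates every pair of elements non-trivially, hence by the contrapositive of the same-side fact $\Sigma^-$ separates every pair, giving (i). For (ii), suppose $Incomp(\Sigma^-)$ were disconnected with components $\Sigma_1,\dots,\Sigma_m$, $m\ge 2$. By Lemma~\ref{lem:incomp-con-com} every split of $\mathcal{I}(\Sigma_i)$ is compatible with every split of $\mathcal{I}(\Sigma_j)$ for $i\ne j$, so by Lemma~\ref{lem:intinc}(i) the graph $Incomp(\mathcal{I}(\Sigma)^-)$ splits into at least $m$ groups carrying non-trivial splits and admitting no cross-edges. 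This contradicts the fact that the incompatibility graph of the full interval system is connected (its core $Incomp(\Sigma_d)$ is a cycle, and every larger interval split crosses an adjacent $2$-split). Hence (ii) holds.

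For sufficiency assume (i) and (ii); this is the crux. I would augment $\mathcal{I}(\Sigma)$ by all trivial splits to obtain $\Sigma^{*}$, which by the first fact is still $\mathcal{I}$-closed, is circular, and contains all trivial splits, so Theorem~\ref{theo:equal}(i) produces a $1$-nested network $N$ with $\Sigma(N)=\Sigma^{*}$. I then read the shape of $N$ off from the hypotheses. From (i): two leaves attached at a common cycle-vertex, or a cherry inside any subtree hanging off a non-trivial cut-edge, would be separated by no non-trivial split of $\Sigma(N)\supseteq\Sigma$, contradicting (i); thus $N$ has no non-trivial cut-edges and exactly one leaf per cycle-vertex. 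From (ii): splits displayed by distinct cycles are compatible, so connectivity of $Incomp(\Sigma^-)$ forces all non-trivial splits of $\Sigma$ into a single cycle $C$, whence by Proposition~\ref{prop:intnet} the closure remains inside $C$ and $\Sigma(N)^-\subseteq\Sigma(C)$; a second cycle (length $\ge 4$) would display an internal non-trivial split not displayed by $C$, which is impossible. Therefore $N$ is a single cycle with one leaf per vertex, i.e.\ the full $n$-cycle simple level-1 network for $\sigma$, so $\Sigma(N)=\mathcal{I}(\Sigma_d)$ is maximal circular. Since $\Sigma_d\subseteq\Sigma(N)=\Sigma^{*}$ consists of non-trivial splits, it lies in $\mathcal{I}(\Sigma)$, and hence $\mathcal{I}(\Sigma)=\mathcal{I}(\Sigma_d)$ is maximal circular.

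For the ``moreover'' clause, the network $N$ just constructed gives existence. For uniqueness, any simple $1$-nested $N'$ with $\Sigma\subseteq\Sigma(N')$ has $\Sigma(N')$ circular and $\mathcal{I}$-closed (Corollary~\ref{cor:cii}(ii)), so $\mathcal{I}(\Sigma)\subseteq\Sigma(N')$; maximality of $\mathcal{I}(\Sigma)$ then forces $\Sigma(N')=\mathcal{I}(\Sigma)=\Sigma(N)$. As networks with equal induced split systems differ only by applications of (R1) and (R2), $N'$ is isomorphic to $N$ up to partial-resolution, and the ordering $\sigma$ is the unique one associated with the maximal circular system $\mathcal{I}(\Sigma)$. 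The main obstacle is exactly the sufficiency step: because $\mathcal{I}$-intersections can manufacture splits that are compatible with all others (creating isolated vertices, and hence new components, in the incompatibility graph of the closure), one cannot propagate connectivity through the closure directly; instead one must pass to the realizing network $N$ and argue geometrically that (i) rules out non-trivial cut-edges and repeated leaves while (ii) rules out additional cycles.
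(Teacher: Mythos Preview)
Your approach is correct in spirit and genuinely different from the paper's. For sufficiency the paper argues purely combinatorially: it fixes an adjacent $2$-split $S^*=x_ix_{i+1}\,|\,X-\{x_i,x_{i+1}\}$, takes a shortest path $S_0,\dots,S_k$ in $Incomp(\Sigma^-)$ between splits separating $x_{i-1}$ from $x_i$ and $x_{i+1}$ from $x_{i+2}$, proves an internal lemma (Lemma~\ref{lem:central-char-max-circular}) that $x_i,x_{i+1}$ stay together along this path, and builds a chain of $\mathcal{I}$-intersections $T_j$ converging to $S^*$. You instead invoke Theorem~\ref{theo:equal}(i) to realise $\Sigma^{*}=\mathcal{I}(\Sigma)\cup\{\text{trivial splits}\}$ as $\Sigma(N)$ for some $1$-nested $N$ and then determine the shape of $N$ geometrically. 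This is shorter and more conceptual; the paper's route has the advantage of being independent of Theorem~\ref{theo:equal}. Your necessity direction (the same-side preservation for (i); Lemmas~\ref{lem:incomp-con-com} and~\ref{lem:intinc} for (ii)) and the ``moreover'' clause are correct and at essentially the same level of detail as the paper's.

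There is one genuine ordering gap in your sufficiency argument. You conclude from (i) alone that ``$N$ has no non-trivial cut-edges and exactly one leaf per cycle-vertex''. This does not follow: a non-trivial cut-edge of a $1$-nested network may join two cycles rather than lead into a subtree, so there need be no cherry (nor two leaves at a common vertex) for (i) to bite. The fix is to apply (ii) \emph{first}. Since every split in $\Sigma^-$ is incompatible with some other (connectedness and $|\Sigma^-|\ge 2$), each is a non-$m$-split of some cycle; incompatible pairs lie in the same cycle and a non-$m$-split lies in exactly one cycle, so connectivity of $Incomp(\Sigma^-)$ forces $\Sigma^-\subseteq\Sigma(C)$ for a single cycle $C$. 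Then $\Sigma(N)^-=\mathcal{I}(\Sigma^-)^-\subseteq\Sigma(C)^-$, and a second cycle $C'$ of length $\ge 4$ would contribute a non-$m$-split of $C'$, which cannot lie in $\Sigma(C)$, a contradiction. Once $N$ has a single cycle, everything hanging off $C$ is a tree; any such tree with at least two leaves and interior degrees $\ge 3$ contains a cherry, and two leaves pendant at the same cycle vertex are likewise inseparable by $\Sigma(N)^-\supseteq\Sigma^-$, so now (i) forces exactly one pendant leaf per cycle vertex. With this reordering your argument is complete.
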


\begin{proof}[Proof]
Let $x_1, ..., x_n, x_{n+1}=x_1$ denote an underlying
circular ordering of $X$ 
for  $\Sigma$. Assume first that \emph{i)} and \emph{ii)} hold. 
We first show that $\mathcal I (\Sigma^-)$
is maximal circular. To this end, it suffices to show that 
$\Sigma_d\subseteq\mathcal{I}(\Sigma^-)$ since this implies that
$\mathcal{I}(\Sigma_d) \subseteq \mathcal{I}(\mathcal{I}(\Sigma^-))
 \subseteq \mathcal{I}(\mathcal{I}(\Sigma))=
\mathcal{I}(\Sigma)$. Combined with the fact that,
 in view of Lemma~\ref{lem:2sp},
$\mathcal{I}(\Sigma_d)$ is maximal circular,
it follows that $\mathcal I(\Sigma_d)=\mathcal{I}(\Sigma^-)=
\mathcal{I}(\Sigma)$.
Hence, $\mathcal{I}(\Sigma)$ is maximal circular.

Assume for contradiction that there
exists some $i\in\{1,\ldots, n\}$ such that the split 
$S^*=x_ix_{i+1}|x_{i+2},\ldots x_{i-1}$ of $\Sigma_2$ is not contained in $\mathcal I( \Sigma)$. 
Then, by assumption, there exist two splits  
$S$ and $S'$ in $\Sigma$ 
such that $S(x_i) \neq S(x_{i-1})$ and  $S'(x_{i+1}) \neq S'(x_{i+2})$.
Let $P_{SS'}$ denote a shortest path in
$Incomp(\Sigma)$ joining $S$ and $S'$. Without loss of generality,
let $S$ and $S'$ be such that the path $P_{SS'}$ is a short as
possible. Let $S_0=S, S_1,..., S_k=S'$ denote that path. The next
lemma is central to the proof

\begin{lemma}\label{lem:central-char-max-circular} 
For all $0 \leq j \leq k$, we have
$S_j(x_i)=S_j(x_{i+1})$. 
\end{lemma}
\begin{proof}[Proof]
First observe that 
$S_j(x_i) = S_j(x_{i-1})$ and 
$S_j(x_{i+1})= S_j(x_{i+2})$ must hold for all 
$0 < j < k$. Indeed, if
there existed some $j\in \{1,\ldots, k-1\}$ such
that $S_j(x_i) \not= S_j(x_{i-1})$ then the path 
$S_j, S_{j+1},\ldots, S_k$ would be shorter than
 $P_{SS'}$ in contradiction to the choice of $S$ and $S'$. 
Similar arguments also imply that $S_j(x_{i+1})= S_j(x_{i+2})$
holds  for all $j\in \{1,\ldots, k-1\}$.

Assume for contradiction that there exists 
$0 \leq j \leq k$ such that $S_j(x_i) \neq S_j(x_{i+1})$. 
Without loss of generality, we may assume
that for all $0\leq l\leq j-1$ we have that
$S_l(x_i)=S_l(x_{i+1})$. Then since
a trivial split cannot be incompatible with any other split on $X$ we
cannot have $j\in\{0,k\}$. Thus, the splits 
$S_{j-1}$ and $S_{j+1}$ must exist. Note that they 
cannot be incompatible, since otherwise the path from $S$ to $S'$
obtained by deleting $S_j$ from $P_{SS'}$ is shorter than 
$P_{SS'}$ which is impossible.
So $S_{j-1}$ and $S_{j+1}$ must be compatible. 
Clearly, $x_i\in S_{j+1}(x_i)\cap S_{j-1}(x_i)$. We next
establish that $\overline{S_{j+1}(x_i)}\cap \overline{S_{j-1}(x_i)}=\emptyset$
cannot hold implying that either
 $S_{j+1}(x_i)\cap \overline{S_{j-1}(x_i)}=\emptyset$ or
$\overline{S_{j+1}(x_i)}\cap S_{j-1}(x_i)=\emptyset$.

Indeed, let $q \in \{1, \ldots, n\}$ such that $S_j=x_{i+1} \ldots x_{q}|x_{q+1} \ldots x_{i}$. We claim that $x_q \in \overline{S_{j+1}(x_i)} \cap \overline{S_{j+1}(x_i)}$. Assume by contradiction that $x_q \in S_{j-1}(x_i)$. Then $S_{j-1}(x_i)$ is an interval of $X$ containing $\{x_i,x_q\}$ and, so, either 
$S_{j-1}(x_i) \supseteq [x_i, x_q] \supset S_j(x_{i+1})$ or 
$S_{j-1}(x_i) \supseteq [x_q, x_i] \supset S_j(x_{i})$.
But both are impossible in view of the fact that $S_{j-1}$ and $S_j$ 
are incompatible. 

Now assume that $S_{j+1}(x_i)\cap \overline{S_{j-1}(x_i)}=\emptyset$,
that is, $S_{j+1}(x_i)\subseteq S_{j-1}(x_i)$.
We postulate that then $S_{j+1}(x_i)\subseteq S_0(x_i)$ must hold which
is impossible since $x_{i-1}\in S_{j+1}(x_i)$ and $S_0(x_i)\not=S_0(x_{i-1})$.
Indeed, the choice of $S$ and $S'$ implies that $S_{j+1}$
and $S_l$ must be compatible, for all $0\leq l\leq j-2$. By 
Lemma~\ref{lem:almost-trivial} applied to $S_{j-1}$, $S_{j-2}$, 
and $S_{j+1}$ it follows
that $S_{j+1}(x_i)\subseteq S_{j-2}(x_i)$ or $S_{j-2}(x_i)\subseteq S_{j+1}(x_i)$.
In the latter case we obtain $S_{j-2}(x_i)\subseteq S_{j-1}(x_i)$ 
which is impossible since $S_{j-1}$ and $S_{j-2}$ are incompatible.
Thus, $S_{j+1}(x_i)\subseteq S_{j-2}(x_i)$. Repeated application
of this argument implies that, for all $0\leq l\leq j-2$
we have $S_{j+1}(x_i)\subseteq S_l(x_i)$, as required.

Finally, assume that $S_{j-1}(x_i)\cap \overline{S_{j+1}(x_i)}=\emptyset$,
that is, $S_{j-1}(x_i)\subseteq S_{j+1}(x_i)$. Then similar arguments
as in the previous case imply that  $S_{j-1}(x_i)\subseteq S_k(x_i)$. 
But this is impossible since $x_{i+1}, x_{i+2} \in S_{j-1}(x_i)$
and $S_k(x_i)=S_k(x_{i+1})\not=S_k(x_{i+2})$.
Thus, $S_j(x_i)=S_j(x_{i+1})$ must hold for all $0 \leq j \leq k$ 
which concludes the proof of Lemma~\ref{lem:central-char-max-circular}. 
\end{proof}

Continuing with the proof of Theorem~\ref{theo:ordun}, 
we claim that the splits 
$$
T_j:=T_{j-1}(x_i) \cap S_j(x_i)|
\overline{T_{j-1}(x_i)} \cup \overline{S_j(x_i)}
$$ 
where $j\in \{1,\ldots,k\}$ and $T_0:=S_0$
are contained in $\mathcal{I}(\Sigma)$. 
Assume for contradiction that there exists some
$j\in \{0,\ldots,k\}$ such that $T_j\not\in \mathcal{I}(\Sigma)$.
Then $j\not=0$ because $S\in \mathcal{I}(\Sigma)$, and $j \not= 1$ 
since $T_1\in\iota(S,S_1)$ and $S,S_1\in \Sigma$. 
Without loss of generality, we may assume that $j$ is such that
for all $1\leq l\leq j-1$ we have that $T_l\in \mathcal{I}(\Sigma)$.
Then $ T_{j-1}$  and $ S_j$ cannot be incompatible and so
 $T_{j-1}(x_i) \subseteq S_j(x_i)$, or 
$S_j(x_i) \subseteq T_{j-1}(x_i)$, or $\overline{S_j}(x_i) \subseteq T_{j-1}(x_i)$
must hold. But $S_j(x_i) \subseteq T_{j-1}(x_i)$ cannot hold
since then $\overline{S_{j-1}(x_i)}\subseteq
\overline{T_{j-2}(x_i)} \cup \overline{S_{j-1}(x_i)}
= \overline{T_{j-1}(x_i)}\subseteq \overline{S_j(x_i)}$
 which is impossible as $S_{j-1}$ and $S_j$ are incompatible. 
Also, $\overline{S_j}(x_i) \subseteq T_{j-1}(x_i)$ cannot hold
since then 
$\overline{S_j}(x_i) \subseteq T_{j-1}(x_i)=T_{j-2}(x_i)\cap S_{j-1}(x_i)
\subseteq S_{j-1}(x_i)$ which is again impossible as  $S_{j-1}$ and $S_j$
are incompatible. Thus,  $T_{j-1}(x_i) \subseteq S_j(x_i)$ 
and so $T_j(x_i)=T_{j-1}(x_i)$. Consequently,
$T_j = T_{j-1}\in \mathcal{I}(\Sigma)$ which is also impossible
and therefore proves the claim.
Thus, $T_j\in \mathcal{I}(\Sigma)$, for all  $0\leq j\leq k$.
Combined with Lemma~\ref{lem:central-char-max-circular}
it follows that, for all $0 \leq j \leq k$,
we also have $T_j(x_i)=T_j(x_{i+1})$. Consequently,
$\{x_i,x_{i+1}\} \subseteq T_k(x_i)$. Combined with the
facts that $T_k(x_i)$ is an interval on $X$ 
and $x_{i-1} \notin S_0(x_i)$, and similarly,
$x_{i+2} \notin S_k(x_i)$ it follows that 
 $\{x_i,x_{i+1}\} = T_k(x_i)$. Hence, 
 $S^*=T_k \in \mathcal{I}(\Sigma)$, which is impossible.
Thus, $\Sigma_d\subseteq \mathcal I( \Sigma^-)$ and so
$\mathcal I(\Sigma_d) =\mathcal I( \Sigma^-)$.

Conversely, assume that $\mathcal I(\Sigma)$ is maximal circular.
Then $\mathcal I(\Sigma)$ clearly satisfies Properties (i) and (ii)
that is, for all $x,y\in X$ distinct there exists some
$S\in \mathcal I(\Sigma)^-$ such that $S(x)\not=S(y)$
and $Incomp(\mathcal I(\Sigma)^-)$ is connected.
We need to show that $\Sigma$ also satisfies Properties (i) and (ii).
Assume for contradiction that $\Sigma$ does not satisfy Property~(i).
Then there exist $x,y\in X$ distinct such that for 
all splits $S\in \Sigma^-$, we have $S(x)=S(y)$. Let $S\in \mathcal I(\Sigma)$
such that $S(x)\not=S(y)$ and let $S_1,S_2,\ldots,S_l=S$ denote a
sequence in $\mathcal I(\Sigma)$ such that $S_i\in \iota(S_{i-1},S_{i-2})$,
for all $3\leq i\leq l$. Without loss of generality we may assume that
$l$ is such that  $S_i(x)=S_i(y)$, for all  $3\leq i\leq l-1$. Then
$S_j(x)=S_j(y)$, for all $j\in \{l-1,l-2\}$ and thus $S(x)=S(y)$ which
is impossible.

Next, assume for contradiction that $\Sigma$ does not satisfy Property~(ii). 
Let $\Sigma_1$ and $\Sigma_2$ denote two disjoint connected components  
of $Incomp(\Sigma^-)$.  For $i=1,2$, let 
$\overline{\Sigma_i}\in \pi_0(\mathcal I(\Sigma_i)^-)$
such that $\Sigma_i\subseteq \overline{\Sigma_i}$.
 Then, $2\leq |\Sigma_i|\leq |\overline{\Sigma_i}|$, for all $i=1,2$.
Combined with Lemma~\ref{lem:intinc}(ii), we obtain 
$\overline{\Sigma_1},\overline{\Sigma_2}\in \pi_0(\mathcal{I}(\Sigma)^-)$. 
Since  $Incomp(\mathcal I(\Sigma)^-)$ is connected, 
it follows for $i=1,2$ that 
$\overline{\Sigma_i}\subseteq \mathcal I(\Sigma_i)^-\subseteq 
\mathcal I(\Sigma)^- =\overline{\Sigma_i}$. Thus, 
$\mathcal I(\Sigma_1)^-=\mathcal I(\Sigma^-)=\mathcal I(\Sigma_2)^-$
and so the incompatibility graphs $Incomp(\mathcal I(\Sigma_1)^-)$,
$Incomp(\mathcal I(\Sigma)^-)$  and
$Incomp(\mathcal I(\Sigma_2)^-)$ all coincide.
Suppose $S\in \Sigma_1$ and $S'\in \Sigma_2$ and let $P$ denote a
shortest path in $Incomp(\mathcal I(\Sigma)^-)$ joining $S$ and $S'$.
Then there must exist incompatible splits $S$ and $S'$ in $P$
such that $S\in \Sigma_1\subseteq\mathcal I(\Sigma_1)^-$ and
 $S'\in \mathcal I(\Sigma_1)^-=\mathcal I(\Sigma_2)^-$ 
which  is impossible in view of Lemma~\ref{lem:incomp-con-com}.

The remainder of the theorem follows from the facts that, 
by Lemma~\ref{lem:2sp},
$\mathcal{I}(\Sigma)$ is maximal circular that, by 
 Lemma~\ref{lem:maxcirc}, there exists a simple level-1 network $N$ such
that $\mathcal{I}(\Sigma)=\Sigma(N)$, that by
Corollary~\ref{cor:cii}(ii), a 1-nested network displays 
displays $\mathcal{I}(\Sigma)$ if and only if it displays $\Sigma$,
and that the split system $\Sigma_d$ uniquely determines the 
underlying circular ordering of $X$.
\end{proof}

Armed with this characterization, we are now ready to establish 
Theorem~\ref{theo:inclu}

\begin{theorem}\label{theo:inclu}
Given a circular split system $\Sigma$ on $X$, it is possible to build, 
in time $O(n^2+|\Sigma|^2)$, a 1-nested network $N$ on $X$ such that 
$\Sigma \subseteq \Sigma(N)$ holds and $|\Sigma(N)|$ is minimal.
Furthermore, $N$ is unique up to isomorphism and partial-resolution. 
\end{theorem}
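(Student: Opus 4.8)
The plan is to pin down the minimal split system explicitly and then invoke the structural results of the previous sections. Write $T$ for the set of all trivial splits of $X$ and set $\hat\Sigma:=\mathcal{I}(\Sigma)\cup T$. I would first argue that $\hat\Sigma$ is precisely the system realised by the sought network. Since a trivial split is compatible with every split of $X$ it never occurs in an $\mathcal{I}$-intersection; hence $\mathcal{I}(\Sigma)\cup T$ is already $\mathcal{I}$-intersection closed and in fact equals $\mathcal{I}(\Sigma\cup T)$. As $\Sigma\cup T$ is circular for the circular ordering underlying $\Sigma$, Corollary~\ref{cor:cii}(i) shows $\hat\Sigma$ is circular, and it contains all trivial splits by construction; Theorem~\ref{theo:equal}(i) then produces a 1-nested network $N$ with $\Sigma(N)=\hat\Sigma$. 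Conversely, for \emph{any} 1-nested network $N'$ with $\Sigma\subseteq\Sigma(N')$, the system $\Sigma(N')$ is circular and $\mathcal{I}$-intersection closed by Corollary~\ref{cor:cii}(ii) and contains all trivial splits, so properties (C1)--(C3) give $\hat\Sigma=\mathcal{I}(\Sigma\cup T)\subseteq\Sigma(N')$. Thus $|\Sigma(N)|=|\hat\Sigma|$ is minimal, which settles existence and optimality.

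For the construction and its running time I would first recover a circular ordering $\sigma$ of $X$ displaying $\Sigma$ and store each split as the pair of indices delimiting its interval; with this encoding, testing two splits for incompatibility becomes an $O(1)$ crossing test. I would then build $Incomp(\Sigma^-)$ and compute its connected components $\pi_0(\Sigma^-)$ in $O(|\Sigma|^2)$ time. By Lemma~\ref{lem:intinc} the $\mathcal{I}$-intersection closure decomposes over these components, so each component carrying an incompatible pair is turned into a single cycle of $N$, namely the simple cycle realising the maximal circular system generated by that component on its support via Theorem~\ref{theo:ordun}, while every split compatible with all others contributes a cut-edge. These blocks are amalgamated along the tree prescribed by the mutually compatible splits, and the trivial splits are realised as pendant edges. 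The $n^2$ term accounts for recovering $\sigma$, the $n$ trivial splits and the at most $n$ vertices of the cycles, while the $|\Sigma|^2$ term accounts for the pairwise incompatibility tests and the component extraction.

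Uniqueness up to isomorphism and partial-resolution I would derive from the same decomposition. In any 1-nested network $N'$ with $\Sigma(N')=\hat\Sigma$, a non-trivial split is displayed by a cut-edge exactly when it is compatible with every other split of $\hat\Sigma$, that is, when its vertex is isolated in $Incomp(\hat\Sigma^-)$, and is displayed by a cycle otherwise; hence the dissection of $N'$ into cyclic blocks and bridges is forced by the incompatibility structure of $\hat\Sigma$. By Lemma~\ref{lem:intinc}(ii) the cyclic blocks are in bijection with the components $\overline{\Sigma}_0\in\pi_0(\mathcal{I}(\Sigma)^-)$, each maximal circular on its support and therefore, by the uniqueness clause of Theorem~\ref{theo:ordun}, determining its cycle uniquely up to partial-resolution. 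The splits compatible with everything form a compatible system whose tree is unique by the Splits Equivalence Theorem for phylogenetic trees, and this tree dictates how the cyclic blocks are attached. Assembling the pieces yields $N\cong N'$ up to partial-resolution.

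The step I expect to be the main obstacle is fitting the whole construction into $O(n^2+|\Sigma|^2)$: in particular recovering the circular ordering $\sigma$ cheaply, and organising the per-component cycle constructions and their amalgamation so that no portion of $\mathcal{I}(\Sigma)$ is materialised beyond what the interval encoding already gives. A secondary delicate point is justifying the block/bridge dichotomy underlying uniqueness, which requires verifying that conditions (i) and (ii) of Theorem~\ref{theo:ordun} hold once each component is restricted to its support, and handling the m-split case in which a split of higher multiplicity could a priori be realised either by a bridge or within a cycle.
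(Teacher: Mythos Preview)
Your proposal is correct and follows essentially the same architecture as the paper's proof: decompose $\Sigma$ via the connected components of $Incomp(\Sigma^-)$, realise each non-singleton component by a cycle via Theorem~\ref{theo:ordun}, realise each isolated (hence compatible) split by a cut-edge, and glue the pieces together along the phylogenetic tree determined by the resulting compatible system using the Splits Equivalence Theorem. The construction, the uniqueness argument, and the running-time estimate you sketch are all close to the paper's.

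Where you genuinely diverge is in the optimality argument. The paper starts from a hypothetical minimal $N$, argues that its cycles must be in bijection with the non-singleton components of $\pi_0(\Sigma)$ and that each cycle $Z_i$ satisfies $\Sigma(Z_i)=\mathcal I(V_i)$, and then reconstructs $N$ from this structural information. You instead name the target split system $\hat\Sigma=\mathcal I(\Sigma)\cup T$ explicitly, invoke Theorem~\ref{theo:equal}(i) to obtain a network $N$ with $\Sigma(N)=\hat\Sigma$, and derive the lower bound $\hat\Sigma\subseteq\Sigma(N')$ for every competitor $N'$ directly from the closure axioms (C1)--(C3) together with Corollary~\ref{cor:cii}(ii). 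This is slicker and makes the minimum explicit; the paper's route, by contrast, extracts the bijection between cycles and incompatibility components as an intrinsic property of \emph{any} minimal network, which is precisely what then drives its uniqueness and construction steps. The two arguments reconverge once you appeal to Lemma~\ref{lem:intinc} and Theorem~\ref{theo:ordun} to obtain the same bijection for your construction and uniqueness paragraphs, so in the end they are reshufflings of the same ingredients. The delicate points you flag---fitting the recovery of a circular ordering into the time bound and checking the hypotheses of Theorem~\ref{theo:ordun} on each component's support---are treated only briefly in the paper as well.
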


\begin{proof}[Proof]
Suppose $\Sigma$ is a circular split system on $X$. 
Put $\{V_1,\ldots,V_l\}=\pi_0(\Sigma)$.  
 Without loss of generality
we may assume that there exists some $j\in\{1,\ldots,l\}$ 
such that $|V_i|=1$ holds for all $1\leq i\leq j-1$ and
$|V_i|\geq 2$ for all $j\leq i\leq l$.
Since $Incomp(\Sigma)$ has $l-j+1$ 
connected components with at least two vertices there exist $l-j+1$
simple 1-nested networks $N_i$ such that $V_i\subseteq \Sigma(C_i)$
holds for the unique cycle $C_i$ of $N_i$. By Theorem~\ref{theo:ordun},
it follows for all $j\leq i\leq l$ that 
$\Sigma(C_i)=\mathcal I(V_i)$ and that $Q_i \subseteq \mathcal I(V_i)$, where 
$Q_i$ denotes the set of m-splits of $C_i$. 

We claim 
that the split system $\Sigma'$ on $X$ given by
$$
\Sigma'=\bigcup_{i=1}^{j-1}V_i\cup\bigcup_{i=j}^lQ_i\cup\bigcup_{x\in X}\{x|X-x\}
$$
is compatible.
Since $\Sigma$ is circular there exists a 1-nested network $N$ on $X$
such that $\Sigma\subseteq \Sigma(N)$. Without loss of generality,
we may assume that $N$ is such that $|\Sigma(N)|$ is minimal among 
such networks. For clarity of exposition, we may furthermore assume
that $N$ is maximal partially-resolved. Then for all $j\leq i\leq l$
there exists a cycle $Z_i$ in $N$ such that $V_i\subseteq \Sigma(Z_i)$.
In fact, $\mathcal I(V_i)= \Sigma(Z_i)$ must hold
for all such $i$. Combined with the minimality
of $\Sigma(N)$, it follows that there exists a one-to-one correspondence
between the cycles of $N$ and the set 
$\mathcal A:=\{\mathcal I(V_i)\,:\, j\leq i\leq l\}$
that maps a cycle  $Z$ of $N$ to
the split system $\Sigma_C\in\mathcal A$
such that for some $i^*\in \{j,\ldots,l\}$ we have 
$\Sigma_C=\mathcal I(V_{i^*})$ and $V_{i^*}\subseteq \Sigma(Z)$.
Furthermore, for all $1\leq i\leq j-1$ there exists a 
cut-edge $e_i$ of $N$ such that the split $S_{e_i}$ induced on $X$ by
deleting $e_i$ is the unique element in $V_i$.

Let $T(N)$ denote the phylogenetic tree on $X$ obtained
from $N$ by first shrinking every cycle $Z$ of $N$ to a vertex $v_Z$ and
then suppressing all resulting degree two vertices. Since 
this operation clearly preserves the splits in $Q_i$, $j\leq i\leq l$,
and also does not affect the cut-edges of $N$ (in the sense
that a cut edge of $T(N)$ might correspond to a path
in $N$  of length at most 3 involving a cut-edge of $N$ 
and one or two m-splits),  it follows that $\Sigma' =\Sigma(T(N))$.
Since any split system displayed by a phylogenetic tree is 
compatible the claim follows.

Since, in addition, $\Sigma'$ also contains all trivial splits on $X$, 
it follows by the ``Splits Equivalence Theorem'' 
(see Section~\ref{sec:intro}) that
there exists a unique (up to isomorphism)
phylogenetic tree $T$ on $X$ such $\Sigma(T)=\Sigma'$. Hence, 
$T(N)$ and $T$ must be 
isomorphic. But then reversing the aforementioned cycle-shrinking operation
that gave rise to $T(N)$ results in a 1-nested network $N'$ on $X$
such that $\Sigma(N)=\Sigma(N')$. Consequently, $N'$ and $N$
are isomorphic and so $\Sigma\subseteq \Sigma(N')$. Note that similar
arguments also imply that $N$ is unique up to partial-resolution and
isomorphism.


To see the remainder of the theorem, note first that 
a maximal circular split system $\Sigma$ on $X$ has $n(n-1)/2$ splits. 
Thus, finding
$Incomp(\Sigma)$ can be accomplished in $\mathcal O(|\Sigma|^2)$ time.
Combined with the facts that $X$ has at most $n$ cycles and any
binary unrooted phylogenetic tree on $X$ has  $2n-3$ cut-edges it follows 
that $N'$ can be constructed in $\mathcal O(n^2+|\Sigma|^2)$ time.
\end{proof}

In consequence of Theorems~\ref{theo:equal}
 and \ref{theo:inclu},  we obtain the 1-nested
analogue of the ``Splits Equivalence Theorem'' (see Section~\ref{sec:intro}).

\begin{corollary}\label{cor:equivalence}
Suppose $\Sigma$ is a split system on $X$ that contains all
trivial splits of $X$. Then there exist a 1-nested network 
$N$ on $X$ such that $\Sigma=\Sigma(N)$ if and only if  $\Sigma$ is circular and
 $\mathcal{I}$-intersection closed. Moreover, if such a network
$N$ exists then it is unique up to isomorphism and partial-resolution
and can be constructed in $\mathcal O(n^2+|\Sigma|^2)$ time.
\end{corollary}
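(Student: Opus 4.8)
The plan is to read off both directions of the corollary directly from the two theorems that precede it, handling the biconditional and the ``moreover'' clause separately. For the biconditional I would simply invoke Theorem~\ref{theo:equal}(i): it already asserts, for a split system $\Sigma$ containing all trivial splits of $X$, that a 1-nested network $N$ with $\Sigma=\Sigma(N)$ exists if and only if $\Sigma$ is circular and $\mathcal{I}$-intersection closed. No new work is required here beyond citing it.

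For the uniqueness and construction claim I would start from the assumption that such an $N$ exists, so that by the biconditional $\Sigma$ is in particular circular. Since $\Sigma$ is circular, Theorem~\ref{theo:inclu} applies and yields a 1-nested network $N'$ on $X$ with $\Sigma\subseteq\Sigma(N')$ and $|\Sigma(N')|$ minimal among all 1-nested networks on $X$ whose displayed split system contains $\Sigma$, together with the assertions that $N'$ is unique up to isomorphism and partial-resolution and is constructible in $\mathcal{O}(n^2+|\Sigma|^2)$ time.

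The one small gap to close is to upgrade the containment $\Sigma\subseteq\Sigma(N')$ furnished by Theorem~\ref{theo:inclu} to equality $\Sigma=\Sigma(N')$. For this I would observe that the given network $N$ is itself a 1-nested network on $X$ satisfying $\Sigma\subseteq\Sigma(N)=\Sigma$, hence is a competitor in the minimisation of Theorem~\ref{theo:inclu} that achieves $|\Sigma(N)|=|\Sigma|$. Minimality therefore gives $|\Sigma(N')|\leq|\Sigma|$, while $\Sigma\subseteq\Sigma(N')$ gives $|\Sigma|\leq|\Sigma(N')|$; combining these forces $|\Sigma(N')|=|\Sigma|$ and thus $\Sigma=\Sigma(N')$. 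Consequently every 1-nested network realising $\Sigma$ exactly is a minimiser in the sense of Theorem~\ref{theo:inclu}, so the ``unique up to isomorphism and partial-resolution'' conclusion of that theorem transfers verbatim to $N$, and the stated running time is inherited from the same theorem.

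I do not expect a genuine obstacle, since the corollary is essentially a repackaging of Theorems~\ref{theo:equal} and \ref{theo:inclu}. The only point demanding care is the cardinality argument that converts ``minimal superset realisation'' into ``exact realisation'', and in particular the check that the network $N$ guaranteed abstractly by Theorem~\ref{theo:equal}(i) is admissible as a competitor in the optimisation of Theorem~\ref{theo:inclu}; this holds because the equality $\Sigma(N)=\Sigma$ trivially entails the containment $\Sigma\subseteq\Sigma(N)$ that Theorem~\ref{theo:inclu} requires.
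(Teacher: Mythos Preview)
Your proposal is correct and matches the paper's approach: the paper presents the corollary as an immediate consequence of Theorems~\ref{theo:equal} and~\ref{theo:inclu} without a written proof, and you have supplied exactly the right glue between them. In particular, your cardinality argument turning the minimal-superset realisation of Theorem~\ref{theo:inclu} into an exact realisation (using the existence of $N$ with $\Sigma(N)=\Sigma$ as a competitor) is the natural way to spell out the ``in consequence'' and is sound.
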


As observed in Section~\ref{sec:prelim}, a 1-nested network also induces
a multi-set of splits. This raises the question of an 1-nested
analogue of the ``Splits Equivalence Theorem'' 
(see Section~\ref{sec:intro}) for such collections. We will settle 
this question elsewhere.

\section{Optimality and the Buneman graph}\label{sec:buneman}

In this section, we investigate the interplay between
the Buneman graph $G(\Sigma)$ of a circular split system $\Sigma$
and a 1-nested network displaying $\Sigma$. More precisely,
we first associate to a circular split system $\Sigma$
a certain subgraph of $G(\Sigma)$ 
which we obtain by replacing each block of
$G(\Sigma)$ by a structurally simpler graph which we call a
marguerite. As it turns out, marguerites hold the key for constructing
optimal 1-nested networks from circular split systems.
We start with defining the Buneman graph and then review 
relevant properties.

Suppose $\Sigma$ is a split system on $X$ and let  $\mathcal P(X)$
 denote the power set of $X$.
Then the {\em Buneman graph}  $G(\Sigma)$  associated to
$\Sigma$ is the subgraph of the
$|\Sigma|$-dimensional hypercube 
whose vertex set $V(\Sigma)=V(G(\Sigma))$  is the
set of all maps $\phi:\Sigma \to \mathcal P(X)$ such that
$\phi(S)\in S$ holds for all $S\in\Sigma$ and
$\phi(S)\cap \phi(S')\not=\emptyset$,
for all $S,S'\in \Sigma$. Note that $V(\Sigma)\not=\emptyset$
 since for all $x\in X$ the  {\em Kuratowski map} associated to
$x$ defined by putting 
$\phi_x:\Sigma \to \mathcal P(X)$: $S\mapsto S(x)$
is contained in $V(\Sigma)$.
The edge set $E(\Sigma)$
comprises all sets $\{\phi,\phi'\}\in {V(\Sigma)\choose 2}$
such that the difference set
$\Delta(\phi,\phi')=\{S\in \Sigma: \phi(S)\not=\phi'(S)\}$
contains precisely one element. 
We say that a split $S=A|B$ of $X$ is  
{\em Bu-displayed} by $G(\Sigma)$
if there exists a ``ladder'' $E'$ of parallel edges whose deletion disconnects
$G(\Sigma)$ into two connected components 
one of whose vertex sets contains $A$ and the
other $B$ (see e.\,g.\,\cite[Lemma 4.5]{DHKMS12} for details
where Bu-displayed is called displayed). 
Note that every split that is Bu-displayed by $G(\Sigma)$
is also a minimal cut of $G(\Sigma)$ and thus displayed by $G(\Sigma)$.
However the converse need not hold.

To illustrate these definitions let $X=\{1,\ldots,8\}$ 
and consider again the split system $\Sigma$ displayed by the
1-nested network depicted in Fig.~\ref{fig:example}(i).
Then the Buneman graph $G(\Sigma)$
associated to $\Sigma$ is depicted in Figure~\ref{fig:intro-network}(ii).

\subsection{Marguerites and Blocks}
In this section, we first focus on the Buneman graph of a 
maximal circular split system and then introduce and study the novel concept 
of a marguerite. We start with collecting some relevant
results. 

For $\Sigma$ a split system on $X$, the 
following five properties of $G(\Sigma)$ are well-known
(see e.\,g.\,\cite[Chapter 4]{DHKMS12}).
\begin{enumerate}
\item[(Bi)] The split system $\Sigma(G(\Sigma))$ 
Bu-displayed by $G(\Sigma)$ is $\Sigma$.
\item[(Bii)] For $\phi\in V(\Sigma)$  let $\mathrm{\min}(\phi(\Sigma))$ denote
the set-inclusion minimal elements in 
$\phi(\Sigma):=\{\phi(S)\,:\, S\in\Sigma\}$
and let $\Sigma^{(\phi)}$ denote 
the set of pre-images of $\mathrm{\min}(\phi(\Sigma))$ under $\phi$. 
Then a vertex $\psi\in V(\Sigma)$ is
adjacent with $\phi$ if and only if there exists some 
split $S^*\in \Sigma^{(\phi)}$
such that $\psi(S^*)=\overline{\phi(S^*)}$ and  $\psi(S)=\phi(S)$,
otherwise. In particular,
$|\Sigma^{(\phi)}|$ is the degree of $\phi$ in $G(\Sigma)$.
\item[(Biii)] In case $\Sigma$ contains all
trivial splits on $X$ then
$\Sigma$ is compatible if and only if, when
identifying each Kuratowski map $\phi_x$ with its underlying element
$x\in X$, $G(\Sigma)$ is a unrooted  phylogenetic tree on $X$ for which
$\Sigma(G(\Sigma))=\Sigma$ holds. Moreover, and up to
 isomorphism, $G(\Sigma)$ is unique.
\end{enumerate}

Note that for any two distinct compatible splits $S$ 
and $S'$ of $X$ there must exist 
a unique subset $A \in  S\cup S'$, say $A\in S$,  
such that $A \cap A'\not= \emptyset$ holds for all $A'\in S'$.
Denoting that subset by $ \max(S|S')$, we obtain

\begin{enumerate}
\item[(Biv)] For   $\Sigma_1,\Sigma_2\in \pi_0(\Sigma)$ distinct
we have $\max(S_1|S_2)= \max(S_1|S_2')$, for all $S_1\in\Sigma_1$ and all
$S_2,S_2'\in \Sigma_2$. 
In consequence, 
$\max(S_1|\Sigma_2):=\max(S_1|S_2)$ is well-defined 
where $S_1\in\Sigma_1$ and $S_2\in \Sigma_2$ \cite[Section 5]{DHKMS12}.
\end{enumerate}

A graph is called {\em 2-connected} if, after deletion of any of 
its vertices, it remains connected or is an isolated vertex \cite{DHKM11}. 
Calling a maximal
$2$-connected component of a graph $G$ a {\em block}
of $G$ and denoting the set of blocks of $G$ by $\frak{Bl}(G)$ we obtain

\begin{enumerate}
\item[(Bv)] The blocks 
of $G(\Sigma)$ are in 1-1 correspondence
with the connected components of $Incomp(\Sigma)$. 
More precisely the map
$
\Theta: \pi_0(\Sigma)\to \frak{Bl}(G(\Sigma)):\,\,\Sigma_0\mapsto
B(\Sigma_0):=\{\phi\in V(\Sigma): \phi(S)=\max(S|\Sigma_0)
\mbox{ holds for all } S\in\Sigma-\Sigma_0\}.
$
is a bijection \cite[Theorem 5.1]{DHKM11}.
\end{enumerate}

To illustrate these definitions, consider again the Buneman graph depicted in 
Figure~\ref{fig:intro-network}(ii) and the
splits $S=78|1\ldots 6$ and $S'=18|2\ldots 7$ both of which are 
 Bu-displayed by that graph.
Then for the marked vertex $\phi$, we have
$\phi(S)=\{7,8\}$. The block marked $\mathcal B_1$ in that figure  
corresponds via $\Theta$ to the connected component 
$\Sigma_0=\{S,S'\}$  and $\max(S'|\Sigma_0)=X-\{2,3,4\}$.

For the following assume that $k\geq 4$ and that $Y=\{X_1,\ldots, X_k\}$
is a partition of $X$. For
clarity of exposition, also assume that $|X_i|=1$ for all $1\leq i\leq k$
and that the unique element in $X_i$ is denoted by $i$.
Further, assume 
that $\sigma$ is the lexicographical ordering of $X$ where we put
$k+1:=1$. Let $\Sigma_k$ denote the maximal circular split system
displayed by $\sigma$ bar the trivial splits of $X$.
Since $\Sigma_k$ contains all 2-splits displayed by $\sigma$ it follows
that $|\pi_0(\Sigma_k)|=1$ and, so, $G(\Sigma_k)$ is a block
in view of Property (Bv). To better understand the structure of 
$B(\Sigma_k)$ consider for all  $1 \leq i \leq k$ and for all 
$0 \leq j < k-3$ the map:

\[
\begin{array}{r c l}
\phi_i^j : \Sigma_k \to \mathcal{P}(X):\,\,\,\, 
S \mapsto
\left\{
\begin{array}{r l}
\overline{S(i)} &\text{if } S(i) \subseteq [i-j, i] \\
S(i) &\text{otherwise.}
\end{array}
\right.
\end{array}
\]
For example for the $k=6,8$ the map $\phi_1^2$ is indicated
by a vertex in Figure~\ref{fig:marg}(i) and (ii), respectively.
\begin{figure}[h]
\begin{center}
\includegraphics[scale=0.6]{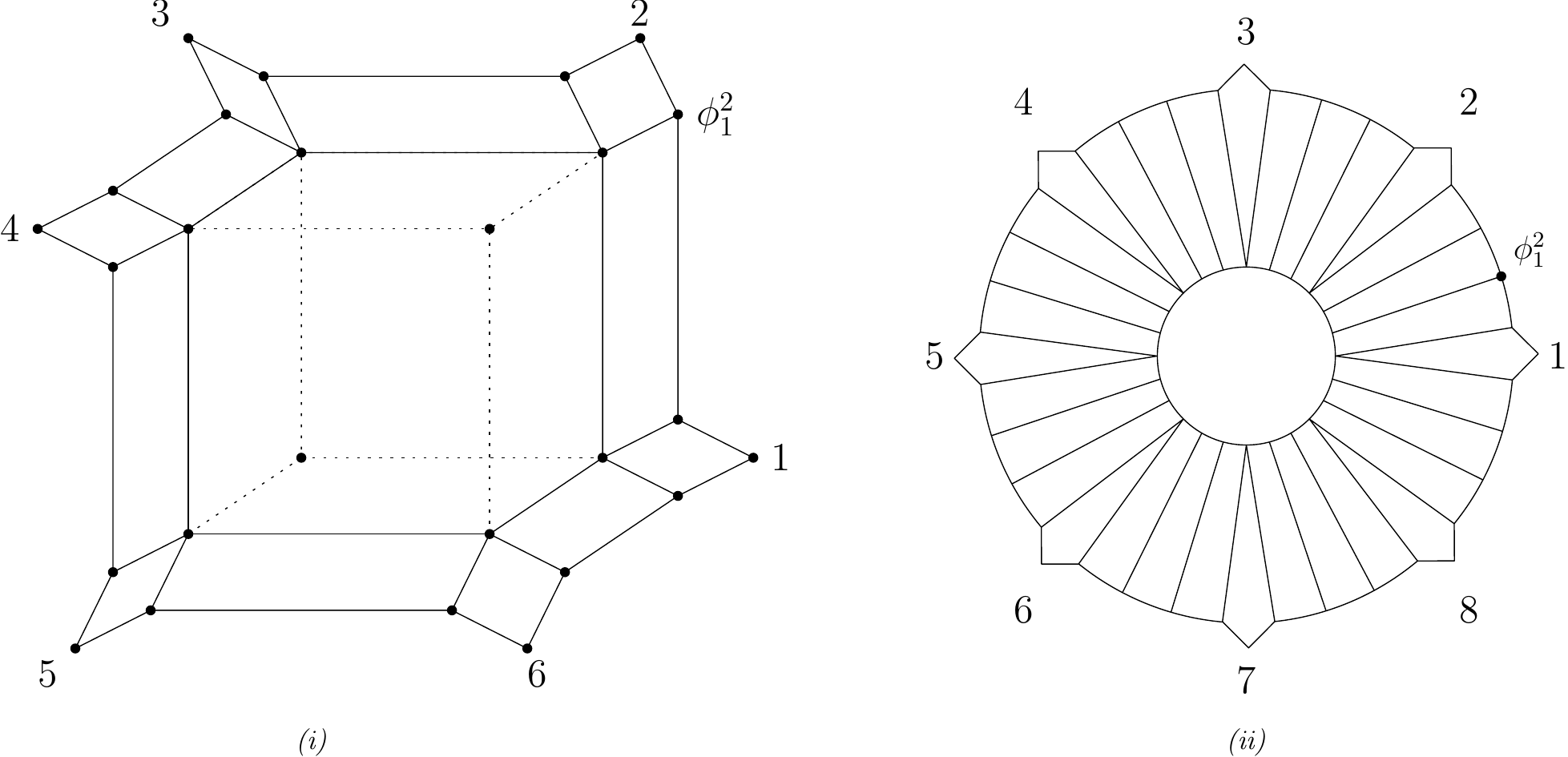}
\caption{\label{fig:marg} For $k=6$, we depict in (i) the Buneman 
graph $G(\Sigma_6)$ in terms of bold and dashed edges and the associated
$6$-marguerite $M(\Sigma_6)$ in terms of bold lines. In addition we indicated
the vertex $\phi_1^2$ of $G(\Sigma_6)$. We picture the $8$-marguerite in (ii)
and indicate again the vertex $\phi_1^2$. }
\end{center}
\end{figure}

To establish the next result, we associated to every element $i\in X$ 
the split systems
 $\Sigma(i)^+:=\{S\in \Sigma_k\,:\, S(i+1)=S(i)\not=S(i-1)\}$.
Then the 
partial ordering ``$\preceq_i$'' defined, for all $S,S'\in \Sigma_k$, by putting
$S\preceq_i S'$ if $|S(i)|\leq |S'(i)|$,
is clearly a total ordering of  $\Sigma(i)^+$ with minimal
element $S^+_i=[i,i+1]|X-[i,i+1]$ 

\begin{lemma}\label{lem:k-marguerite}
For any $k\geq 4$ the following statements hold:

\noindent (i) For  all $i\in\{1\ldots,k\}$ and all $0 \leq j < k-3$
the map $\phi_i^j$ is a vertex of $G(\Sigma_k)$, $\phi_i^{k-3}=\phi_{i+1}^0$
holds,
and $\Delta(\phi_i^j,\phi_i^{j+1})=\{[i-j-1,i]|X-[i-j-1,i]\}$.
In particular, $\{\phi_i^j,\phi_i^{j+1}\}$ is an edge in  $G(\Sigma_k)$.

\noindent (ii) For all $i\in\{1\ldots,k\}$ and all $1\leq j<k-3$, the map
\[
\begin{array}{r c l}
\psi_i^j : \Sigma_k \to \mathcal{P}(X):\,\,\,\, 
S \mapsto
\left\{
\begin{array}{r l}
\overline{\phi_i^j(S)} &\text{if } S=S^+_i \\
\phi_i^j(S) &\text{otherwise.}
\end{array}
\right.
\end{array}
\]
is a vertex in $G(\Sigma_k)$ that is adjacent with $\phi_i^j$. Moreover
$\psi_i^{k-3}=\psi_{i+1}^0$ and $\{\psi_i^j,\psi_i^{j+1}\}$ is an edge
in $G(\Sigma_k)$. 
\end{lemma}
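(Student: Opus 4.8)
We need to verify two parallel claims about maps into the Buneman graph $G(\Sigma_k)$, where $\Sigma_k$ is the (trivial-split-free) maximal circular split system on $\{1,\dots,k\}$ with its lexicographic circular ordering $\sigma$. Part (i) asserts that each $\phi_i^j$ (for $0\le j<k-3$) is a genuine vertex of $G(\Sigma_k)$, that the indexing "wraps around" via $\phi_i^{k-3}=\phi_{i+1}^0$, and that consecutive maps differ in exactly one split, namely $[i-j-1,i]\mid X-[i-j-1,i]$, so that they are joined by an edge. Part (ii) makes the analogous claims for the $\psi_i^j$, which are obtained from $\phi_i^j$ by flipping the value on the single split $S_i^+=[i,i+1]\mid X-[i,i+1]$.

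**Plan for part (i).** The plan is to check the vertex condition directly from the definition of $V(\Sigma_k)$: a map $\phi:\Sigma_k\to\mathcal P(X)$ with $\phi(S)\in S$ lies in $V(\Sigma_k)$ iff $\phi(S)\cap\phi(S')\ne\emptyset$ for all $S,S'$. First I would observe that the condition "$S(i)\subseteq[i-j,i]$" selects exactly those splits one of whose blocks is a short interval ending at $i$ on the clockwise side, and that $\phi_i^j$ replaces the block containing $i$ by its complement precisely on those splits. The cleanest route is to show $\phi_i^j$ agrees with a Kuratowski map except on a controlled family: concretely, I expect $\phi_i^j(S)$ always to be the block of $S$ containing the element $i-j-1$ (equivalently the "far" side), so that $\phi_i^j=\phi_{i-j-1}$ restricted appropriately, or at least that all values $\phi_i^j(S)$ contain a common element; since any two blocks sharing a common element intersect, the pairwise-intersection condition follows immediately. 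Establishing that a single element lies in every $\phi_i^j(S)$ is the conceptual heart of the vertex check, and I expect it to fall out from the interval structure of $\Sigma_k$: because every split is an interval split, $S(i)\subseteq[i-j,i]$ forces $\overline{S(i)}$ to contain $i-j-1$, while in the complementary case $S(i)$ already contains $i-j-1$.

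**The wrap-around and edge computations.** For the identity $\phi_i^{k-3}=\phi_{i+1}^0$, I would note that at $j=k-3$ the interval $[i-(k-3),i]=[i+3,i]$ covers all of $X$ except $\{i+1,i+2\}$, so the test "$S(i)\subseteq[i+3,i]$" excludes exactly the splits whose $i$-block is contained in $\{i+1,i+2\}$ near $i+1$; comparing this with the $j=0$ test for index $i+1$ (namely $S(i+1)\subseteq\{i+1\}$, i.e. trivial, which is vacuous since $\Sigma_k$ has no trivial splits) should make the two maps coincide split-by-split after a short case analysis on where each interval split sits relative to $i,i+1,i+2$. For the difference set, I would compute directly: $\phi_i^j$ and $\phi_i^{j+1}$ use the thresholds $[i-j,i]$ and $[i-j-1,i]$, which differ only by admitting the one additional split whose $i$-block equals the interval $[i-j-1,i]$; on that single split the value flips to its complement and on all others it is unchanged, giving $\Delta(\phi_i^j,\phi_i^{j+1})=\{[i-j-1,i]\mid X-[i-j-1,i]\}$, hence an edge by the definition of $E(\Sigma_k)$.

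**Part (ii) and the main obstacle.** For part (ii) the edge claim $\{\phi_i^j,\psi_i^j\}$ is almost immediate once one knows $\psi_i^j\in V(\Sigma_k)$, since by construction $\Delta(\phi_i^j,\psi_i^j)=\{S_i^+\}$; so the work is again the vertex condition for $\psi_i^j$, together with the wrap-around $\psi_i^{k-3}=\psi_{i+1}^0$ and the edge $\{\psi_i^j,\psi_i^{j+1}\}$. Here I would lean on Property (Bii): $\psi_i^j$ is obtained from $\phi_i^j$ by flipping the value on the split $S_i^+$, so to show it is a vertex adjacent to $\phi_i^j$ it suffices to verify that $S_i^+$ lies in $\Sigma_k^{(\phi_i^j)}$, i.e. that $\phi_i^j(S_i^+)$ is a set-inclusion-minimal element of $\phi_i^j(\Sigma_k)$. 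Since $S_i^+=[i,i+1]\mid X-[i,i+1]$ is a size-two split and $\phi_i^j(S_i^+)$ should be the small block $\{i,i+1\}$ (for $j\ge1$, when $[i,i+1]\not\subseteq[i-j,i]$ in the relevant sense), its minimality among the $\phi_i^j$-values is plausible from the total ordering $\preceq_i$ on $\Sigma(i)^+$ recorded just before the lemma, whose minimum is exactly $S_i^+$. I expect the \textbf{main obstacle} to be precisely this minimality verification: one must rule out that any other split $S$ has $\phi_i^j(S)\subsetneq\phi_i^j(S_i^+)=\{i,i+1\}$, which requires understanding all the $\phi_i^j$-values simultaneously rather than one at a time, and carefully handling the boundary between the "flipped" and "unflipped" regimes. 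Once (Bii) certifies $\psi_i^j\in V(\Sigma_k)$ and the adjacency to $\phi_i^j$, the remaining difference-set computations for $\{\psi_i^j,\psi_i^{j+1}\}$ and the wrap-around $\psi_i^{k-3}=\psi_{i+1}^0$ reduce to the same interval bookkeeping as in part (i), transported through the single flipped coordinate.
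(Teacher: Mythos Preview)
Your plan for part (i) has a genuine gap: the claim that all values $\phi_i^j(S)$ share a common element---in particular that $i-j-1\in\phi_i^j(S)$ for every $S$---is false. The first half of your reasoning is fine: if $S(i)\subseteq[i-j,i]$ then certainly $i-j-1\in\overline{S(i)}=\phi_i^j(S)$. But the complementary implication fails. Take $k=6$, $i=1$, $j=2$, so $[i-j,i]=\{5,6,1\}$ and $i-j-1=4$. For the split $S=\{1,2\}\mid\{3,4,5,6\}$ we have $S(1)=\{1,2\}\not\subseteq\{5,6,1\}$, hence $\phi_1^2(S)=S(1)=\{1,2\}$, which does not contain $4$. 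In fact for this same $\phi_1^2$ one computes $\phi_1^2(\{6,1\}\mid\{2,3,4,5\})=\{2,3,4,5\}$ and $\phi_1^2(\{2,3\}\mid\{4,5,6,1\})=\{4,5,6,1\}$; together with $\{1,2\}$ these three sets have empty common intersection, so no choice of ``common element'' can rescue the argument. The vertices $\phi_i^j$ are genuinely non-Kuratowski for intermediate $j$, and the pairwise-intersection condition must be checked some other way.

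The fix is exactly the mechanism you already identified for part (ii): use Property~(Bii) inductively. One checks that $\phi_i^0$ is the Kuratowski map $\phi_i$ (since $\Sigma_k$ contains no trivial splits the condition $S(i)\subseteq\{i\}$ is vacuous), hence a vertex. Then for each $j$, the passage from $\phi_i^j$ to $\phi_i^{j+1}$ flips the single split $S^*=[i-j-1,i]\mid X-[i-j-1,i]$, and $\phi_i^j(S^*)=[i-j-1,i]$ is set-inclusion minimal in $\phi_i^j(\Sigma_k)$ (it is the $\preceq_i$-minimum among the ``left'' family $\Sigma(i)^-$ analogous to your $\Sigma(i)^+$), so (Bii) yields $\phi_i^{j+1}\in V(\Sigma_k)$ adjacent to $\phi_i^j$. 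This is precisely the paper's route. Your treatment of the wrap-around $\phi_i^{k-3}=\phi_{i+1}^0$, the difference-set computation, and all of part (ii) is essentially correct and matches the paper's argument.
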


\begin{proof}
(i) Suppose $i\in\{1\ldots,k\}$ and $0 \leq j < k-3$.
To see that $\phi_i^j\in V(\Sigma_k)$, we distinguish
between the cases that (a) $j=0$, (b) $j=k-3$, and (c) $1\leq j\leq k-4$. 
Let $i\in \{1,\ldots, k\}$.

Assume first that (a) holds and let $S\in \Sigma_k$.
Then  $\phi_i^0(S)=S(i)$ must hold since $\Sigma_k$ does not contain
trivial splits. Moreover, $\phi_i^0(S)=\overline{S(i)}$ holds if and
 only if $S(i) \subseteq \{i\}$ if and only if $S$ is the trivial split $i|X-i$.
Thus, $\phi_i^0$ is a vertex in $G(\Sigma_k)$ in this case. 

Assume next that (b) holds. We 
claim that $\phi_i^{k-3}=\phi_{i+1}^0$. Assume again that 
$S\in \Sigma_k$. Observe that since
$i-(k-3)\equiv i+3$ $(\negthickspace\negthickspace\mod k)$ we have
$S(i) \subseteq \{i-(k-3), \ldots, i\}$ if and only if 
$\{i+1,i+2\} \subseteq \overline{S(i)}$. 
We distinguish the cases that ($\alpha$) $S(i)=S(i+1)$
and ($\beta$) $S(i)\not=S(i+1)$.

Assume first that Case~($\alpha$) holds, that is,  $S(i)=S(i+1)$. 
Then $\{i+1,i+2\} \not\subseteq \overline{S(i)}$.
Combined with the observation made at the beginning of the
proof of this case, we obtain
$S(i)\not \subseteq \{i-(k-3), \ldots, i\}$ and, so,
$\phi_i^{k-3}(S)=S(i)=S(i+1)=\phi_{i+1}^0(S)$.

Next, assume that  Case~($\beta$) holds, that is, $S(i)\not=S(i+1)$.
Then $i+1\in \overline{S(i)}$. Since $S$ cannot be 
a trivial split it follows that $i+2\in\overline{S(i)}$
must hold too. Combined again with the observation made at the beginning
of the proof of this case,
it follows that  $S(i) \subseteq \{i-(k-3), \ldots, i\}$.
Thus, $\phi_i^{k-3}(S)=\overline{S(i)}=S(i+1)=\phi_{i+1}^0(S)$
which completes the proof of the claim.
In combination with Case~($\alpha$), $\phi_i^{k-3}\in G(\Sigma_k)$ follows. 

So assume that (c) holds.    
Combining (a) with (Bii) and the fact that 
$\phi_i^0(S)=\phi_i^1(S)$ for all $S\in \Sigma_k-\{S^+_i \}$ and 
$\phi_i^0(S^+_i)=\overline{\phi_i^1(S^+_i )}$, it follows that $\phi_i^1$
is a vertex of $G(\Sigma_k)$. Similar arguments imply that
if $\phi_i^l$ is a vertex in $G(\Sigma_k)$ then so is $\phi_i^{l+1}$.
This concludes the proof of Case (c).

That $\Delta(\phi_i^j,\phi_i^{j+1})=\{[i-j-1,i]|X-[i-j-1,i]\}$
holds for all $i\in\{1\ldots,k\}$ and $0 \leq j < k-3$ is an immediate
consequence of the construction.

(ii) Suppose $i\in\{1\ldots,k\}$ and $1\leq j<k-3$. Then
 $\psi_i^j$ must be a vertex of $G(\Sigma_k)$ that is adjacent with 
$\phi_i^j$ in view of (Bii) as $S^+_i \in \Sigma^{\phi_i^j}$. That 
$\psi_i^1=\psi_{i-1}^{k-3}$ is implied by the fact that the two
splits in which $\psi_i^{k-3}$ and $\psi_{i+1}^1$ differ from
$\phi_{i+1}^0$ are incompatible. That 
$\{\psi_i^j,\psi_i^{j+1}\}$ is an edge in $G(\Sigma_k)$ follows from the
fact that $\{\phi_i^j,\phi_i^{j+1}\}$ is an edge in $G(\Sigma_k)$.
\end{proof}

Bearing in mind Lemma~\ref{lem:k-marguerite}, we next associate 
to $G(\Sigma_k)$ the {\em $k$-marguerite} $M(\Sigma_k)$ on $X$,
that is, the subgraph of $G(\Sigma_k)$  induced 
by the set of maps $\phi_i^j$ and $\psi_i^l$
where $1\leq i\leq k$, $0\leq j<k-3$ and $1\leq l<k-3$. We 
illustrate this definition 
for $k=6,8$ in  Fig.~\ref{fig:marg}. Note that
if $k$ or $X$ are of no relevance to the discussion
then we will also refer to a $k$-marguerite on $X$ simply 
as a {\em marguerite}.

Clearly, $G(\Sigma_k)$ and $M(\Sigma_k)$ coincide for $k=4,5$. To be able to
shed light into the structure of $k$-marguerites for $k\geq 6$, we require
 some more terminology. Suppose 
$k\geq 4$ and  $i\in\{0,\ldots, k\}$. Then we call a 
vertex of $M(\Sigma_k)$ of 
the from $\phi_i^0$ an \emph{external vertex}. Moreover,
we call for all $0\leq j<k-3$ an edge of  $M(\Sigma_k)$  of the form 
$\{\phi_i^j,\phi_i^{j+1}\}$ an \emph{external edge}. 
Note that since $M(\Sigma_k)$ is in particular a subgraph
of the $|\Sigma_k|$-dimensional hypercube, any split in
 $\Sigma_k$ not of the form $i,i+1| X-\{i,i+1\}$ is Bu-displayed 
in terms of four parallel edges of $M(\Sigma_k)$ exactly two of 
which are external. 

\subsection{Gates}
In this section we establish that any partially-resolved 1-nested network
can be embedded into the Buneman graph associated to $\Sigma(N)$
thus allowing the bringing to bear of a wealth of results for the
Buneman graph to such networks. Of particular interest  to us
are gated subsets of $V(\Sigma)$ where a
subset $Y\subseteq Z$ of a (proper) metric space $(Z,D)$ is called 
a {\em gated} subset of $Z$ if there exists for every
$z\in Z$ a (necessarily unique) element $y_z\in Y$ such that
$D(y,z)=D(y,y_z)+D(y_z,z)$ holds for all $y\in Y$.  We refer 
to $y_z$ as the {\em gate} for $z$ in $Y$.
 
We start with associating a metric space to the Buneman graph of a split system.
Suppose  $\Sigma$ is a split system
on $X$ such that for all $x$ and $y$ in $X$ distinct there
exists some $S\in \Sigma$ such that $S(x)\not=S(y)$. Then
 the map
$D:V(\Sigma)\times V(\Sigma)\to \mathbb R_{\geq 0}:
(\phi,\phi')\mapsto  |\Delta(\phi,\phi')|$, is 
a (proper) metric on $V(\Sigma)$ (see 
e.\,g.\,\cite[page 52]{DHKMS12}) that is, $D$ attains $0$
only on the main diagonal, is symmetric, and satisfies
the triangle inequality. 

For $\Sigma$ a split system on $X$  and $\Sigma'\in \pi_0(\Sigma)$, 
the following two additional properties of the 
Buneman graph will be useful.

\begin{enumerate}
\item[(Bvi)] The map
\[
\begin{array}{r c l}
V(\Sigma') \to V(\Sigma): \,\,\,\,
\phi \mapsto 
(\begin{array}{r c l}
\tilde{\phi} : \Sigma \to \mathcal{P}(X):   \,\,\,\,
S \mapsto
\left\{
\begin{array}{r l}
\phi(S) &\text{ if } S \in \Sigma', \\
\mathrm{max}(S|\Sigma') &\text{ otherwise}
\end{array}
\right.)
\end{array}
\end{array}
\]
is an isometry between $G(\Sigma')$ and the block $B(\Sigma')$ of 
$G(\Sigma)$.

\item[(Bvii)] For every map $\phi\in V(\Sigma)$, the map $\phi_{\Sigma'}$
given by 
$$
\phi_{\Sigma'} : \Sigma(N) \to \mathcal{P}(X):   \,\,\,\,
S \mapsto
\left\{
\begin{array}{r l}
\phi(S) &\text{ if } S \in \Sigma' \\
\mathrm{max}(S|\Sigma') &\text{ otherwise,}
\end{array}
\right.
$$
\end{enumerate}
is the gate for $\phi$ in $B(\Sigma')$.
We denote by $Gates(G(\Sigma))$ the set of all
vertices $\phi$ of $G(\Sigma)$ for which there exists a block 
$B\in \frak{Bl}(G(\Sigma))$
such that $\phi$ is the gate for some $x\in X$ in $B$.



\begin{lemma}
Suppose $N$ is a 1-nested network on $X$. Then a block of $G(\Sigma(N))$ is
either a  cut-edge or contains precisely one marguerite. 
Moreover the gates of a marguerite 
$M$ in $G(\Sigma(N))$ are the maps $\tilde{\phi}$ where $\phi$ is an
external vertex of $M$. 
\end{lemma}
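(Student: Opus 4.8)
The plan is to prove the two assertions separately, in each case reducing to the Buneman graph of a single cycle via Property~(Bv) and the isometry~(Bvi). For the block dichotomy, I would first pin down the connected components of $Incomp(\Sigma(N))$ explicitly. The opening observation in the proof of Proposition~\ref{prop:intnet} says that two splits of $\Sigma(N)$ are incompatible if and only if they are displayed by pairs of edges of a common cycle of $N$. Hence every trivial split, and every non-trivial split displayed only by a cut-edge of $N$, is compatible with all other splits and so forms a singleton component of $Incomp(\Sigma(N))$, whereas for each cycle $C$ of $N$ the non-trivial splits displayed by $C$ form exactly one connected component, namely $\Sigma(C)^-$.

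Next I would feed each type of component through the bijection $\Theta$ of~(Bv) and the isometry~(Bvi). A singleton component $\{S\}$ yields a block isometric to $G(\{S\})$, which is a single edge, i.e.\ a cut-edge of $G(\Sigma(N))$. A cycle component $\Sigma(C)^-$ yields a block $B(\Sigma(C)^-)$ isometric to $G(\Sigma(C)^-)$; collapsing the pendant subtrees attached to the vertices of $C$ to single representatives turns $\Sigma(C)^-$ into a maximal circular split system on $k$ points, where $k$ is the length of $C$, so Lemma~\ref{lem:maxcirc} gives $G(\Sigma(C)^-)\cong G(\Sigma_k)$. By the very definition of a $k$-marguerite, $G(\Sigma_k)$ contains precisely one marguerite $M(\Sigma_k)$, whence $B(\Sigma(C)^-)$ contains precisely one marguerite. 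This establishes the dichotomy.

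For the gate statement, fix the marguerite $M$ inside the block $B=B(\Sigma(C)^-)$ associated to a cycle $C$, and recall from~(Bvii) that the gate for a Kuratowski map $\phi_x$ in $B$ is $(\phi_x)_{\Sigma(C)^-}$, which on $\Sigma(C)^-$ acts by $S\mapsto S(x)$. The crucial computation is that for an external vertex $\phi_i^0$ of $M$ one has $\phi_i^0(S)=S(i)$ for every $S\in\Sigma(C)^-$, because $\Sigma(C)^-$ contains no trivial split and so the defining condition $S(i)\subseteq\{i\}$ never triggers the complement branch. Since every vertex of $N$ off the leaves has degree at least three, each cycle vertex $v_i$ is incident with a non-cycle edge leading into a pendant subtree containing at least one leaf $x$, and for such $x$ one has $S(x)=S(i)$ on $\Sigma(C)^-$. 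Thus the gate of $\phi_x$ agrees with $\phi_i^0$ on $\Sigma(C)^-$ and hence equals $\tilde{\phi_i^0}$ under the embedding of~(Bvi); so every external vertex arises as a gate, and conversely every gate $(\phi_x)_{\Sigma(C)^-}$ is of Kuratowski form $S\mapsto S(x)$ on $\Sigma(C)^-$ and therefore equals some external $\phi_i^0$. This shows the gates of $M$ are exactly the maps $\tilde\phi$ with $\phi$ external.

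I expect the main obstacle to lie in the bookkeeping of the last part, where one must make the identification between the abstract system $\Sigma_k$ (on which the external vertices $\phi_i^0$ are defined) and the concrete $\Sigma(C)^-$ on $X$ fully rigorous, matching the induced circular orderings and verifying that collapsing pendant subtrees does not disturb which vertex is a gate. In particular, care is needed to rule out that any internal vertex $\phi_i^j$ or $\psi_i^j$ with $j\geq 1$ of $M$ is a gate; this follows once one observes that a gate restricted to $\Sigma(C)^-$ is always a Kuratowski map $S\mapsto S(x)$ and that, among the vertices of $M$, only the external ones have this form.
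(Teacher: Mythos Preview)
Your proposal is correct and follows essentially the same route as the paper: use Property~(Bv) to reduce blocks of $G(\Sigma(N))$ to connected components of $Incomp(\Sigma(N))$, identify the non-singleton components with the split systems coming from cycles of $N$, pass to the quotient partition $Y$ to recognise a copy of $\Sigma_k$, and then read off the marguerite and its gates via~(Bvi) and~(Bvii). Your treatment of the gate direction is in fact slightly more explicit than the paper's, which only verifies that each $\tilde{\phi_i^0}$ is a gate and leaves the converse implicit.

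One small imprecision worth tightening (which the paper itself glosses over): the non-trivial splits displayed by a cycle $C$ do \emph{not} all lie in a single connected component of $Incomp(\Sigma(N))$. Any m-split of $C$, i.e.\ a split of the form $X_i\,|\,X\setminus X_i$ with $|X_i|\geq 2$, is compatible with every other split of $\Sigma(N)$ and hence forms its own singleton component; the genuine non-singleton component consists of the splits displayed by two \emph{non-adjacent} edges of $C$. After collapsing the pendant subtrees these m-splits become trivial and drop out, so your identification with $\Sigma_k$ still goes through, but the sentence ``the non-trivial splits displayed by $C$ form exactly one connected component, namely $\Sigma(C)^-$'' should be adjusted accordingly.
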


\begin{proof}
Suppose $\Sigma'\in \pi_0(\Sigma(N))$. Note that  $|\Sigma'|=1$ if and only if
$B(\Sigma')$ is a cut-edge of $G(\Sigma(N))$. So assume that 
$|\Sigma'|\geq 2$. Then $B(\Sigma')$ is a block of $G(\Sigma(N))$ and, so,
there exists a unique cycle $C$ of $N$ of
length $k\geq 4$ such that $\Sigma(C)=\Sigma'$. Let
$Y$ denote the partition of $X$ induced
by deleting all edges of $C$ and let $\Sigma'_Y$ denote the split system
on $Y$ induced by $\Sigma(C)$. Then $\Sigma'_Y$ is of the
form $\Sigma_k$ and, so, $G(\Sigma_Y')$ contains the
$k$-marguerite $M(\Sigma'_Y)$. Combined with Property (Bvi)
it follows that  $G(\Sigma(N))$ contains the marguerite $M(\Sigma'_Y)$
(or, more precisely, the graph obtained by replacing for every external vertex
$\phi_i^0$, $1\leq i\leq k$, the label $Y_i\in Y$ by the elements in $Y_i$). 

To 
see the remainder of the lemma suppose that $M$ is a marguerite
and assume that $k\geq 4$ such that $M=M(\Sigma_k)$. 
Let $Y=\{X_1,\ldots,X_k\}$ denote 
the partition of $X$ induced by $\Sigma_k$ and assume that  
$x\in X$. Then there must exist some $i\in\{1,\ldots,k\}$ such that
$x\in X_i$. Since $\phi_i^0$ is clearly the map 
\[
\begin{array}{r c l}
\phi_i^0 : \Sigma_k \to\mathcal{P}(X):\,\,\,\,
S=A|B \mapsto
\left\{
\begin{array}{r l}
A &\text{ if } X_i \subseteq A \\
B &\text{ if } X_i \subseteq B,
\end{array}
\right.
\end{array}
\]
Properties~(Bvi) and (Bvii) imply that
 $\tilde{ \phi_i^0} $ is the gate for $x$ in $M$.
\end{proof}

To be able to establish that any $1$-nested partially resolved  
network $N$ can be
embedded as a (not necessarily induced) subgraph
into the Buneman graph $G(\Sigma(N))$ associated to $\Sigma(N)$,
we require again more terminology. Suppose $N$ is a partially-resolved 
1-nested network and $v$ is a non-leaf vertex of $N$. Then 
$v$ is either incident with three or more cut-edges of $N$,  or 
there exists a cycle $C_v$  of $N$ 
that contains $v$ in its vertex set.
In the former case, we choose one of them and denote it by $e_v$.
In addition, we denote by $x_v\in X$ an element
such that $e_v$ is not contained in any path in $N$ 
from $x_v$ to $v$. In the latter case, we define $x_v$
to be an element in $X$ such that 
no edge of $C_v$ is contained in any path in $N$ from $v$ to $x_v$. 

\begin{theorem}\label{theo:const}
Suppose $N$ is a 1-nested partially-resolved network on $X$. Then the 
map $\psi:V(N)-X\to Gates(G(\Sigma(N)))$ defined by 
mapping every non-leaf vertex $v\in V(N)$ to the map
$$\xi(v):\Sigma(N)\to \mathcal P(X):\,\,\,\,
 S\mapsto
\left\{
\begin{array}{r l}
\max(S|\Sigma^*) &\text{ if } S\in \Sigma(N)-\Sigma^* \\
S(x_v) &\text{ else }
\end{array}
\right.
$$
is a bijection between the set of  non-leaf 
vertices of $N$ and the gates of $G(\Sigma(N))$
where $\Sigma^*=\{S_{e_v}\}$ if $v$ is contained in 
three or more cut-edges of $N$ and $\Sigma^*=\Sigma(C_v)^-$ else.
In particular, $\xi$ induces an embedding of $N$ into 
$G(\Sigma(N))$ by mapping each leaf $x$ of $N$ to the 
leaf $\phi_x$ of $G(\Sigma(N))$ and 
replacing for any two adjacent vertices $v$ and
$w$ of a cycle $C$ of $N$ of length $k$ the edge $\{v,w\}$ by the path 
$\phi_i^0:=\xi(v) ,\phi_i^1,\ldots, \phi_i^{k-3}:=\xi(w)$. 
\end{theorem}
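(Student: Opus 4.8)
The plan is to recognise $\xi(v)$ as the gate for the Kuratowski map $\phi_{x_v}$ in the block $B(\Sigma^*)$ and then to read the bijection off the block/marguerite structure established above. First I would record that $\Sigma^*\in\pi_0(\Sigma(N))$ in either case: by (the argument in) Proposition~\ref{prop:intnet} two splits of $\Sigma(N)$ are incompatible only when displayed by the same cycle, so every cut-edge split $S_{e_v}$ spans a singleton component and the non-trivial splits $\Sigma(C_v)^-$ of a cycle form a single component. Hence $\max(S|\Sigma^*)$ is defined by (Biv), and comparing the displayed formula for $\xi(v)$ with the formula in (Bvii) shows $\xi(v)=(\phi_{x_v})_{\Sigma^*}$; that is, $\xi(v)$ is the gate for $\phi_{x_v}$ in $B(\Sigma^*)$, so $\xi(v)\in Gates(G(\Sigma(N)))$ and $\psi$ lands in the correct target. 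The observation that powers the rest of the proof is that $\xi(v)(S)$ is precisely the side of $S$ on which $v$ lies in $N$: for $S\in\Sigma^*$ this holds because $x_v$ lies on $v$'s side of every split displayed by $C_v$ (resp.\ $e_v$), and for $S\notin\Sigma^*$ the cut $E_S$ misses $C_v$ (resp.\ $e_v$) entirely, so $\max(S|\Sigma^*)$ is the side of $S$ containing the whole of $v$'s block and hence $v$. In particular $\xi(v)$ depends only on $v$, not on the auxiliary choices of $e_v$ and $x_v$.

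Injectivity is then immediate from this characterisation: if $\xi(v)=\xi(w)$ then $v$ and $w$ lie on the same side of every split in $\Sigma(N)$; but two distinct non-leaf vertices of $N$ are separated by some minimal edge-cut, and hence by some split of $\Sigma(N)$, because every internal vertex has degree at least three and every cycle has length at least four. Thus $v=w$.

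For surjectivity I would work block by block using the lemma preceding the theorem. For a block $B(\Sigma(C)^-)$ attached to a cycle $C$ of length $k$, that lemma identifies its gates with the external vertices $\tilde{\phi_1^0},\ldots,\tilde{\phi_k^0}$ of the marguerite $M(\Sigma(C)^-)$, and each $\tilde{\phi_i^0}$ equals $\xi(v_i)$ for the $i$-th vertex $v_i$ of $C$; for a cut-edge block $B(\{S_e\})$ the two vertices are the gates for the two sides of $e$, each equal to $\xi$ of the corresponding endpoint. The embedding claim then follows from Lemma~\ref{lem:k-marguerite}: for adjacent vertices $v,w$ of a $k$-cycle one has $\xi(v)=\phi_i^0$ and $\xi(w)=\phi_i^{k-3}=\phi_{i+1}^0$, and the lemma provides the external path $\phi_i^0,\phi_i^1,\ldots,\phi_i^{k-3}$ joining them in $G(\Sigma(N))$, while each cut-edge of $N$ maps onto the single edge forming its block and each leaf $x$ onto $\phi_x$.

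The step I expect to be the main obstacle is surjectivity together with the reconciliation forced by the two cases in the definition of $\Sigma^*$: when an endpoint $v$ of a cut-edge also lies on a cycle, the gate obtained inside the cut-edge block must be shown to be the very same vertex of $G(\Sigma(N))$ as the gate $\xi(v)$ obtained inside the cycle block. This amounts to checking that a vertex of $G(\Sigma(N))$ serving as a gate for two adjacent blocks corresponds to a single vertex of $N$ -- equivalently, that the side-of-$S$ characterisation of $\xi(v)$ is genuinely independent of which incident block is used to compute it -- and it is exactly here, at the cut-vertices where a split of $N$ has multiplicity greater than one, that the careful bookkeeping is needed.
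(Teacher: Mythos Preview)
Your proposal is correct and follows essentially the same route as the paper: recognise $\xi(v)$ as the gate $(\phi_{x_v})_{\Sigma^*}$ via Property~(Bvii), establish injectivity by separating distinct non-leaf vertices with a split of $\Sigma(N)$, and obtain surjectivity block by block from the marguerite lemma and Property~(Bv).

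Two small remarks are worth making. First, your ``side of $S$ on which $v$ lies'' characterisation of $\xi(v)(S)$ is a tidy unifying device; the paper instead argues the two injectivity cases (same cycle versus separated by a cut-edge) directly, which amounts to the same thing but without naming the invariant. Second, the reconciliation issue you flag at the end---that the gate for a cut-edge block and the gate for an adjacent cycle block must be the \emph{same} vertex of $G(\Sigma(N))$ when they arise from a single vertex of $N$---is real, and your side-of-$S$ description is exactly what settles it. The paper is terse here (``similar arguments also imply\ldots''), so your explicit identification of this as the delicate point, together with a mechanism that resolves it, is if anything an improvement in exposition rather than a gap.
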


\begin{proof}
Suppose $N$ is a 1-nested network and put $\Sigma=\Sigma(N)$. To see
that $\xi$ is well-defined suppose $v\in V(N)-X$. 
Then $v$ is either contained
in three or more cut-edges of $N$ or 
 $v$ is a vertex of some cycle $C$ of $N$. In the former case
we obtain $\{S_{e_v}\}\in \pi_0(\Sigma(N))$  and in the later we have
$C=C_v$ and $\Sigma(C_v)^-\in \pi_0(\Sigma)$. In either case, 
the definition of the element $x_v$ combined with Property (Bvii)
implies $\xi(v)\in Gates(G(\Sigma))$.

To see that $\xi$ is injective suppose $v$ and $w$ are two 
non-leaf vertices of $N$ such that $\xi(v)=\xi(w)$. 
Assume for contradiction that $v\not=w$. 
 It suffices to distinguish
between the cases that (i) $v$ and $w$ are contained in the same cycle, 
and that (ii) there exists a cut edge $e'$ on any path from $v$ to $w$.

To see that (i) cannot hold, suppose that $v$ and $w$ are vertices on a 
cycle $C$ of
$N$. Then, $S(x_v) = \max(S|\Sigma(C)^-)=S(x_w)$ must hold
 for the m-split $S$ 
obtained by deletion of the two edges of $C$ adjacent to $v$
which is impossible. Thus 
(ii) must hold. Hence, there must exist a cut-edge 
$e'$ on the path from $v$ to $w$.
Then  $\xi(v)(S_{e'}) \neq \xi(w)(S_{e'})$ follows which is again 
impossible. Thus, $\xi$  must be injective.



To see that $\xi$ is surjective suppose $g\in Gates(G(\Sigma))$.
Then there exists some $x_g\in X$ and some block $B\in \frak{Bl}(G(\Sigma))$
such that $g$ is the gate for $x_g$ in $B$. Let $\Sigma_B\in \pi_0(\Sigma(N))$ 
denote the connected component that, in view of Property (Bv) is in one-to-one
correspondence with $B$. If there exists a cycle $C$ of $N$ such
that $\Sigma(C)^-=\Sigma_B$ then let $v_g$ be a vertex of $N$ such that no edge
on any path from $v_g$ to $x_g$ crosses an edge of $C$. Then, by construction,
$\xi(v_g)=g$. Similar arguments show that $\xi(v_g)=g$ must hold if
$ \Sigma_B$ contains precisely one split and thus corresponds to a cut-edge
of $N$.
 Hence, $\xi$ is also surjective and thus bijective.

The remainder of the theorem is straight-forward.
\end{proof}

Theorem \ref{theo:const} implies that by carrying out steps 
(Ci) and (Cii) stated in 
Corollary~\ref{cor:construction}
any 1-nested partially-resolved 
network $N$ induces a 1-nested network $N(\Sigma(N))$  
such that  split system $\Sigma(N(\Sigma(N)))$ induced by $N(\Sigma(N))$
is the split system $\Sigma(N)$ induced by $N$.

\begin{corollary}\label{cor:construction}
Let $\Sigma$ be a split system on $X$ 
for which there exists a 1-nested network $N$ 
such that $\Sigma=\Sigma(N)$. Then we can obtain 
$N(\Sigma)$ from $G(\Sigma)$ by carrying out 
 steps (Ci) and (Cii):\\
\emph{(Ci)} For all $x\in X$ replace each leaf $\phi_x$ by $x$,
and \\
\emph{(Cii)} For all blocks $B$ of $G(\Sigma)$ that 
contain a $k$-marguerite $M$ for some 
$k\geq 4$, first add the edges 
$\{\phi_i^0,\phi_{i+1}^0\}$ for all $i \in \{1, \ldots, k\}$ 
where $k+1:=1$ and 
then delete all edges and vertices of $B$ not of the form $\phi_i^0$
for some $1\leq i \leq k$.
\end{corollary}


We next show that  even if the circular
split system under consideration does not satisfy the assumptions of 
Corollary~\ref{cor:construction}, steps (Ci) and (Cii) still give rise to
a, in a well-defined sense, optimal 1-nested network

\begin{theorem}\label{theo:buneman}
Let $\Sigma$ be a circular split system on $X$ that contains all trivial splits 
on $X$. Then $N(\Sigma)$
is a 1-nested network such that:\\
\emph{i)} $\Sigma \subseteq \Sigma(N)$,\\
\emph{ii)} $|\Sigma(N)|$ is minimal among the 1-nested network
 satisfying \emph{i)},\\
\emph{iii)} A vertex $v$ of a cycle $C$ of $N$ is partially resolved if
and only if the split displayed by the edges of $C$ incident with 
$v$ belongs to $\Sigma$.

\noindent Moreover $N$ is unique up to isomorphism and partial-resolution.
\end{theorem}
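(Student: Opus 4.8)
The plan is to establish Theorem~\ref{theo:buneman} by combining the structural results about marguerites and gates (Theorem~\ref{theo:const} and Corollary~\ref{cor:construction}) with the optimality result of Theorem~\ref{theo:inclu}. The key observation is that the construction $N(\Sigma)$ produced by steps (Ci) and (Cii) should coincide, up to isomorphism and partial-resolution, with the optimal network guaranteed by Theorem~\ref{theo:inclu}. So the overarching strategy is to show that $N(\Sigma)$ is well-defined as a 1-nested network, that it displays $\Sigma$, and that it realizes the minimum split system; uniqueness will then follow from the uniqueness clause in Theorem~\ref{theo:inclu}.

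First I would verify that $N(\Sigma)$ is indeed a 1-nested network. The subtlety here is that $\Sigma$ need not be $\mathcal I$-intersection closed, so Corollary~\ref{cor:construction} does not directly apply. The plan is to pass to the $\mathcal I$-intersection closure: by Corollary~\ref{cor:cii}(i) the closure $\mathcal I(\Sigma)$ is circular for the same ordering, and by construction $G(\Sigma)$ is built from $\Sigma$ itself. I would analyze each block $B$ of $G(\Sigma)$ via Property (Bv): blocks correspond to connected components $\Sigma_0 \in \pi_0(\Sigma)$. For a singleton component we get a cut-edge; for a component with $|\Sigma_0| \geq 2$, Lemma~\ref{lem:k-marguerite} and the gates lemma guarantee that $B$ contains a unique $k$-marguerite $M(\Sigma_0)$, whose external vertices $\phi_i^0$ become, after step (Cii), the vertices of a cycle $C$ of length $k$. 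Adding the edges $\{\phi_i^0,\phi_{i+1}^0\}$ and deleting the marguerite interior yields exactly such a cycle. Since distinct blocks meet in at most one vertex (a cut-vertex of $G(\Sigma)$), the cycles of $N(\Sigma)$ share at most one vertex, so $N(\Sigma)$ is 1-nested.

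Next I would establish claims (i)--(iii). For (i), each split $S \in \Sigma$ lies in some component $\Sigma_0 \in \pi_0(\Sigma)$ and is therefore Bu-displayed in the corresponding marguerite by four parallel edges, exactly two of which are external (as noted after the marguerite definition); after step (Cii) these external edges become two edges of the cycle $C$ whose deletion induces $S$, so $S \in \Sigma(N(\Sigma))$. For (ii), the essential point is that step (Cii) replaces each block by the smallest cycle through its external vertices, so $\Sigma(C) = \mathcal I(\Sigma_0)$ by Theorem~\ref{theo:ordun}; no superfluous splits are introduced beyond the $\mathcal I$-intersection closure, and by Theorem~\ref{theo:inclu} this is precisely the minimum-size split system containing $\Sigma$. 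For (iii), a vertex $v$ of a cycle $C$ arises from an external vertex $\phi_i^0$ that is, by the gates lemma, the gate for some element of $X$ in the block; $v$ is partially resolved exactly when the m-split cut by the two incident edges of $C$ belongs to $\Sigma$ (rather than only to $\mathcal I(\Sigma)$), which is the content of (iii).

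The main obstacle I expect is the careful bookkeeping in (ii) and (iii) of which splits are genuinely displayed versus merely forced by the closure, and in particular keeping straight the distinction between the split system $\Sigma(N)$ and the multi-set of splits. The delicate part is showing that step (Cii) produces exactly the cycle of length $k = |\Sigma_0|$-determined size and not some partial resolution or over-resolution of it; this requires invoking Lemma~\ref{lem:k-marguerite} to confirm that the external vertices $\phi_1^0,\ldots,\phi_k^0$ are cyclically arranged in the marguerite in the order dictated by the underlying circular ordering of $X$, so that adding the edges $\{\phi_i^0,\phi_{i+1}^0\}$ closes up into a single cycle rather than a more complicated graph. Once this correspondence between marguerites and cycles is pinned down, the uniqueness statement follows immediately: both $N(\Sigma)$ and the network of Theorem~\ref{theo:inclu} display $\mathcal I(\Sigma)$ and are minimal, hence by the uniqueness clause there they are isomorphic up to partial-resolution.
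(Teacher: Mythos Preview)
Your outline is essentially the paper's approach: both routes boil down to showing that $\Sigma(N(\Sigma))=\mathcal I(\Sigma)$ and then invoking the results of Section~\ref{sec:cir-ord} for minimality and uniqueness. The paper packages this via a contradiction argument and Corollary~\ref{cor:equivalence} (assuming a smaller $N'$, then showing $\Sigma(N')=\mathcal I(\Sigma)$ and concluding $N'\cong N(\mathcal I(\Sigma))$), whereas you go directly through Theorem~\ref{theo:ordun} on each block to get $\Sigma(C)=\mathcal I(\Sigma_0)$ and then Theorem~\ref{theo:inclu} for minimality. These are equivalent in content, and your direct route is arguably cleaner; part~(iii) is handled identically in both, via the bijection between cut-edges of $G(\Sigma)$ and of $N(\Sigma)$ coming from Property~(Bv).

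There is one genuine gap you should tighten. You invoke ``Lemma~\ref{lem:k-marguerite} and the gates lemma'' to conclude that every non-trivial block of $G(\Sigma)$ contains a $k$-marguerite, but the gates lemma is only stated for $\Sigma=\Sigma(N)$ with $N$ 1-nested, i.e.\ for $\mathcal I$-intersection closed split systems. For general circular $\Sigma$ the block $B(\Sigma_0)\cong G(\Sigma_0)$ need not contain a full marguerite, because $\Sigma_0$ need not be maximal circular on the partition it induces. What survives---and what step~(Cii) actually needs---is that the $k$ gates $\phi_i^0$ (one per part of the partition induced by $\Sigma_0$) exist as distinct vertices of $B(\Sigma_0)$ via Property~(Bvii), and that Theorem~\ref{theo:ordun} applied to $\Sigma_0$ on that partition identifies the cycle through them with the simple level-1 network realizing $\mathcal I(\Sigma_0)$. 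You should argue this directly from the gate description rather than asserting a marguerite is present. The paper's own proof sidesteps the issue by silently working with $N(\mathcal I(\Sigma))$ in place of $N(\Sigma)$, which leaves the corresponding identification implicit; your more explicit route forces the point into the open, so you should address it.
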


\begin{proof}
(i) $\&$ (ii): Suppose for contradiction that there exists a 
1-nested network $N'$
such that $\Sigma\subseteq \Sigma(N')$
and $|\Sigma(N')|< |\Sigma(N(\Sigma))|$.
Without loss of generality, we may assume that $N'$ is such that 
$|\Sigma(N')|$ is as small as possible. Moreover, we may assume 
without loss of generality that $N'$ and $N(\Sigma)$ are both 
maximal partially-resolved. To obtain the required contradiction,
we employ Corollary~\ref{cor:equivalence} 
to establish that $N'$ and $N(\Sigma)$ 
are isomorphic. 

Since $\Sigma\subseteq \I(\Sigma)$ it is clear that $\I(\Sigma)$
contains all trivial splits of $X$.
Furthermore, since $\Sigma$ is circular, Corollary~\ref{cor:cii}(i) 
implies that $\I(\Sigma)$ is circular. Since $\I(\Sigma)$ is clearly 
$\I$-intersection closed and, by Property (Bi), $\I(\Sigma)$ is the split 
system Bu-displayed by $G(\I(\Sigma))$ it follows that $\I(\Sigma)$ 
comprises all splits  displayed by $N(\I(\Sigma))$. Hence, by 
Corollary~\ref{cor:equivalence}, up to isomorphism and partial-resolution,
$N(\I(\Sigma))$ is the unique 1-nested network for which the
displayed split system is  $\I(\Sigma)$.

We claim that $\I(\Sigma)=\Sigma(N')$ holds too.
By Corollary~\ref{cor:cii}(iii), we have   
$\I(\Sigma)\subseteq \Sigma(N')$. 
To see the converse set inclusion
assume that $S\in \I(\Sigma)$. Then $S$ is either induced by 
(a) a cut-edge of $N'$ or (b) $S$ is not an m-split and 
there exists a cycle $C$ of $N'$ that displays $S$. 
In case of (a) holding, $S\in \Sigma$ follows 
by the minimality of $|\Sigma(N')|$. So assume that (b) holds.
Then there must exist some connected component 
$\Sigma_C\in \pi_0(\Sigma)$ that displays $S$. Hence, by Property (Bv),
there exists some block $B_C\in \frak{Bl}(\Sigma)$
such that the split system Bu-displayed by  $B_C$ is $\Sigma_C$.
Hence, $\Sigma_C$ is also displayed by $N(\Sigma)$. Since, as observed above
$\Sigma(N(\Sigma))=\I(\Sigma)$ we also have $\Sigma(N')\subseteq \I(\Sigma)$
the claim follows.

(iii) Suppose  $C$ is a cycle of $N$ and $v$ is a vertex of $C$.
Assume first that $v$ is partially resolved. Then there exists a cut-edge
$e$ of $N$ that is incident with $v$. Note that the split $S_e$ displayed by
$e$ is also displayed by the two edge of $C$ incident with $v$.
In view of Property (Bi) and, implied by (Ci) and (Cii), that
 the cut-edges of $N$ are in 1-1 correspondence with the
cut-edges of $G(\Sigma)$ we obtain $ S_e\in\Sigma$. 

To see the converse assume that $e_1$ and $e_2$ are the two edges of $C$
incident with $v$ such that the split $S$ displayed by $\{e_1,e_2\}$ is
contained in $\Sigma$. Then $S$ is compatible with all splits in
$\Sigma-\{S\}$. By Property (Bv), it follows that there exists a cut-edge 
$e$ in $G(\Sigma)$ such that $S_e=S$. Combined with  (Ci) and (Cii)
it follows that $v$ is partially resolved.
\end{proof}

\section{Open Questions}\label{sec:open-probs}

In this paper, we have started to investigate the interplay
between Buneman graphs and 1-nested networks. Although our results
are encouraging involving non-trivial characterizations, 
numerous questions that might be of interest have remained unanswered. 
For example, regarding Corollary~\ref{cor:equivalence} 
what is the minimal size of $\Sigma$ that allows one to, in our sense, uniquely 
recover $\Sigma(N)$?  Also, is it possible to characterize 
split system induced by level-2 networks (i.\,e.\,networks 
obtained from level-1 networks by adding
a cord to a cycle)? Finally, a number of 
reconstruction algorithms to reconstruct rooted level-1 networks
try and infer them from a collection rooted binary phylogenetic 
trees on three leaves. Such trees are generally referred to as triplets
and in real biological studies it is generally to much to hope for that
a set of triplets contains {\em all} triplets induced by 
the (unknown) underlying network. One way to overcome this problem is 
to employ triplet inference rules. Such rules 
 are well-known for rooted phylogenetic trees
but are missing for general level-1 networks. The question therefore
becomes if the work presented here combined with results on closure
obtained in \cite{GHW08}  might provide a starting point 
for developing such rules.\\

\noindent {\bf Acknowledgments}
KTH and PG thanks the London Mathematical Society for supporting a visit
of KTH to PG where some of the ideas for the paper were conceived.

\bibliographystyle{abbrv}

\bibliography{bibliography}{}

\end{document}